\documentclass[12pt,a4paper]{article}

\usepackage[utf8]{inputenc}   
\usepackage{comment}          
\usepackage{xcolor}           
\usepackage{ulem}             
\usepackage{multirow}         
\usepackage{enumerate}        

\usepackage{amsmath}          
\usepackage{amssymb}          
\usepackage{amsfonts}         
\usepackage{mathtools}        
\usepackage{stmaryrd}         
\usepackage{mathrsfs}         
\usepackage{bbm}              

\usepackage{graphicx}         
\usepackage{subfigure}        
\usepackage{tikz-cd}          

\usepackage{pgfplots}         
\pgfplotsset{compat=1.18}     
\usetikzlibrary{arrows}       
\usetikzlibrary{calc}         
\usetikzlibrary{decorations.pathmorphing} 
\usetikzlibrary{shapes, backgrounds, fit, positioning, intersections} 

\usepackage{algorithm}
\usepackage{algorithmic}

\usepackage[colorlinks=true, linkcolor=blue, anchorcolor=blue, citecolor=blue]{hyperref} 

\makeatletter
\def\tank#1{\protected@xdef\@thanks{\@thanks \protect\footnotetext[0]{#1}}}
\def\bigfoot{\@footnotetext}
\makeatother

\newcommand{\ea}{\end{array}}

\usepackage{theorem} 
\newtheorem{theorem}{Theorem}[section]
\newtheorem{proposition}{Proposition}[section]
\newtheorem{corollary}{Corollary}[section]
\newtheorem{lemma}{Lemma}[section]
\newtheorem{definition}{Definition}[section]
\newtheorem{remark}{Remark}[section]
\newtheorem{example}{Example}[section]

{\theorembodyfont{\rmfamily}
}

\newenvironment{proof}{\noindent\textbf{Proof.}}{\hfill$\square$\par\medskip}

\begin{document}  

\title{\Large\bf Sharp Dimension Bounds for Spline Spaces over T-meshes with Highest Order of Smoothness}

\author{Bingru~Huang \quad and \quad Falai Chen\thanks{Corresponding author. E-mail address: chenfl@ustc.edu.cn} \\ 
School of Mathematical Sciences\\ 
University of Science and Technology of China\\ 
Hefei, 230026, People's Republic of China}
\date{}
\maketitle
\begin{center}
	\begin{minipage}{160mm}
{\bf Abstract.} The dimension of a polynomial spline space of bi-degree $(d_1,d_2)$ over a T-mesh $\mathscr{T}$ with the highest order of smoothness $(d_1-1,d_2-1)$ depends on both mesh topology and geometric configurations. Under the assumption that  the T-connected components of the T-mesh $\mathscr{T}$ contain no vanishable T $l$-edges, we develop explicit upper and lower bounds of the dimension of the polynomial spline space. By introducing a decoupling technique within the completely non-diagonalizable component (CNDC) of the T-mesh $\mathscr{T}$, we separate tightly coupled multi-vertex constraints and transform global conformality conditions into localized linear equations along each interior large edge. Based on the decoupling technique, a new dimension formula of the polynomial spline space is then presented, and from which sharp upper and lower bounds of the dimension are obtained. The bounds are sharp in the sense that different geometric realizations of T-meshes with the same topology can attain the lower and upper bounds for the dimension of the polynomial spline space. We further prove that the new formula is consistent with Mourrain's homological dimension formula, and a sharper lower bound is obtained by our method. 

\medskip

		\vfill
        
		\textbf{Keywords:} Polynomial spline space over T-mesh, dimension calculation, decoupling technique.  
	\end{minipage}
\end{center}
\section{Introduction}

Splines, which are piecewise polynomials with prescribed global smoothness, play an important role in approximation theory \cite{schumaker2007spline,lyche2018foundations}, computer-aided geometric design (CAGD) \cite{farin2002}, and numerical analysis. A major development in multivariate spline modeling was the introduction of Non-Uniform Rational B-Splines (NURBS) \cite{piegl1997}, which became a standard for geometric design and industrial manufacturing. However, NURBS have a structural limitation: they must be defined on tensor-product grids. Consequently, local refinement propagates across the entire mesh, introducing many redundant control points and reducing computational efficiency in adaptive simulations~\cite{ts1,ts2}.

To overcome this limitation and meet the increasing demand for local refinement—especially in Isogeometric Analysis (IGA) \cite{hughes2005, cottrell2009}, several types of locally refinable splines have been developed. Examples include hierarchical B-splines (HB-splines) \cite{HB1, HB2}, truncated hierarchical B-splines (THB-splines) \cite{thb}, locally refined splines (LR-splines) \cite{lr}, T-splines \cite{ts1, ts2}, PHT-splines \cite{PHT} and PT-splines \cite{zhong2025basis}. 
In fact, all these different types of splines can be regarded as a specific representation of {\bf polynomial splines over T-meshes}. To analyze such spine spaces, it is important to calculate the dimensions and construct bases of the spline spaces.

Over the past two decades, many methods have been proposed to calculate dimensions. Early studies used the B-net method \cite{dim2006} and homological algebra \cite{dim2014} to derive dimension formulas for splines with low degrees or low order of smoothness. Later, more general approaches were developed, including the minimal determining set method \cite{MDS}, the space embedding method \cite{jin2013}, and the smoothing cofactor method \cite{zeng2015, dim2016, huang2024stability}. The smoothing cofactor method converts the continuity conditions across mesh edges into linear algebraic equations, reducing the dimension problem into finding the rank of a global conformality matrix~\cite{zeng2015}.

However, when the degree $d$ of the spline and the the smoothness order $\mu$ are close, especially for the highest order of smoothness where $\mu = d-1$, the dimension of the spline space can be unstable \cite{Ins2011, Ins2012}. In this case, the dimension depends not only on the topology of the mesh (such as the number of vertices, edges, and cells), but also on the exact geometric coordinates of the grid lines \cite{guo2015problem, li2019instability}. This dimensional instability complicates the use of these splines in practical engineering applications \cite{zhong2025basis}.

To address this problem, many researchers have focused on special classes of T-meshes with stable dimensions, such as hierarchical T-meshes \cite{zeng2015,huang2026dimension} and diagonalizable T-meshes \cite{dim2016, huang2024stability}. Huang and Chen \cite{huang2024stability} developed a decomposition theory that separates the T-connected component into a diagonalizable component and a completely non-diagonalizable component (CNDC). Under this decomposition, dimensional stability is reduced to the coordinate-independence of the rank of the conformality matrix attached to the CNDC \cite{huang2024stability, huang2025preliminarystudydimensionalstability}.

Nevertheless, for meshes with complex  topologies, analyzing the conformality matrix of a CNDC is still difficult. To address this issue, in this paper we introduce the concept of \textit{decoupled multi-vertex cofactor} in order to separate the cofactors at the multi-vertices. Under this technique, the global conformality conditions are separated into two complementary levels. The first level consists of local equations along individual horizontal or vertical T $l$-edges, while the second level only requires the two copies assigned to the same multi-vertex to be compatible.  The local equations can therefore be solved independently edge by edge, and the remaining compatibility requirements are assembled only after these local solutions have been obtained. This gives a localized route to dimension calculation and dimension bounds.

The main contributions of this paper are summarized as follows:
\begin{itemize}
    \item[\textbf{(1)}] We introduce a decoupling technique within the CNDC to effectively separate tightly coupled multi-vertex constraints.
    \item[\textbf{(2)}] We transform the global conformality conditions into localized linear equations along individual T $l$-edges, providing a direct and general method for spline dimension computation.
    \item[\textbf{(3)}] We derive explicit upper and lower dimension bounds for spline spaces over T-meshes with no vanishable T $l$-edges.  The bounds are sharp in the sense that they are attainable for specific geometric realizations of a fixed T-mesh topology.
   \item[\textbf{(4)}] The relationship between our dimension formula and the Mourrain's homological dimension formula is presented. 
\end{itemize}

The remainder of this paper is organized as follows. Section~2 reviews polynomial spline spaces over T-meshes, the smoothing cofactor method, and the complete partition of a T-mesh. Section~3 develops the decoupling technique and the localized dimension formula. Section~4 establishes the upper and lower bounds of the dimension, presents their precise relationship with the Mourrain's homological formula. Section~5 concludes the paper and outlines future work.

\section{Preliminaries}\label{sec:prelim}
In this section, we review some  preliminary knowledge regarding T-meshes, polynomial spline spaces, and the smoothing cofactor method, including conformality vector spaces, conformality matrices, and the decomposition of T-connected components of a T-mesh.

\subsection{Spline spaces over T-meshes}\label{subsec:spline_spaces}
We first introduce some necessary terminologies and notations. A T-mesh $\mathscr{T}$ is an axis-aligned rectangular partition of a simply-connected domain that allows T-junctions. Vertices are the grid points of the mesh; among them, T-junctions are interior vertices of valence 3. An edge is a line segment connecting two adjacent vertices along a horizontal or vertical grid line. Edges are classified as boundary edges or interior edges according to their geographic location.

A large edge (denoted as an $l$-edge) is a maximal straight segment composed of collinear contiguous edges, whose endpoints are either T-junctions or boundary vertices. An interior $l$-edge is further categorized as follows:
\begin{itemize}
    \item A \textit{cross-cut} if both endpoints lie on the domain boundary.
    \item A \textit{T $l$-edge} if both endpoints are T-junctions.
    \item A \textit{ray} otherwise (one endpoint is a T-junction and the other lies on the boundary).
\end{itemize}

The set of all T $l$-edges in a T-mesh $\mathscr{T}$ forms a \textbf{T-connected component}, denoted as $T(\mathscr{T})$, which plays a central role in the dimension formula. In this paper, we assume without loss of generality that the T-connected component is connected. Otherwise, a T-connected component can be divided into several isolated connected components, each of which can be treated independently. For convenience, we classify the interior vertices of a T $l$-edge into two classes: \textbf{mono-vertices} and \textbf{multi-vertices}. If an interior vertex is the intersection of two T $l$-edges, then it is called a multi-vertex. Otherwise, it is called a mono-vertex.

Figure~\ref{fig:T-connected component} provides an illustrative example of a T-mesh and its corresponding T-connected component. Within the T-connected component, there are 4 T $l$-edges: $v_1v_{7}, v_2v_8, v_3v_4,$ and $v_5v_{6}$.

\begin{figure}[htbp]
\centering
\begin{minipage}[b]{0.48\textwidth}
\centering
\begin{tikzpicture}[line cap=round,line join=round,>=triangle 45,x=1.0cm,y=1.0cm,scale=1]
\draw [line width=1.pt] (1.,1.)-- (6.,1.);
\draw [line width=1.pt] (1.,1.)-- (1.,6.);
\draw [line width=1.pt] (1.,6.)-- (6.,6.);
\draw [line width=1.pt] (6.,6.)-- (6.,1.);
\draw [line width=1.pt] (1.,5.)-- (6.,5.);
\draw [line width=1.pt] (1.,2.)-- (6.,2.);
\draw [line width=1.pt] (2.,6.)-- (2.,1.);
\draw [line width=1.pt] (1.5,6.)-- (1.5,1.);
\draw [line width=1.pt] (1.,5.5)-- (6.,5.5);
\draw [line width=1.pt] (1.,1.5)-- (6.,1.5);
\draw [line width=1.pt] (5.5,6.)-- (5.5,1.);
\draw [line width=1.pt] (5.,1.)-- (5.,5.5);

\draw [line width=2.pt] (3,5.5)-- (3.,2.);
\draw [line width=2.pt] (1.5,4)-- (5.5,4.);
\draw [line width=2.pt] (2.,3.)-- (5.5,3);
\draw [line width=2.pt] (4,1.5)-- (4.,5.);

\begin{scriptsize}
\draw [fill=black] (3,5.5) circle (2.0pt);
\draw[color=black] (3.13,5.65) node {$v_1$};
\draw [fill=black] (3.,2.) circle (2.0pt);
\draw[color=black] (3.14,2.1) node {$v_7$};
\draw [fill=black] (1.5,4) circle (2.0pt);
\draw[color=black] (1.64,4.15) node {$v_3$};
\draw [fill=black] (5.5,4.) circle (2.0pt);
\draw[color=black] (5.64,4.15) node {$v_4$};
\draw [fill=black] (2.,3.) circle (2.0pt);
\draw[color=black] (2.13,3.15) node {$v_5$};
\draw [fill=black] (5.5,3) circle (2.0pt);
\draw[color=black] (5.65,3.15) node {$v_6$};
\draw [fill=black] (4,1.5) circle (2.0pt);
\draw[color=black] (4.13,1.34) node {$v_8$};
\draw [fill=black] (4.,5.) circle (2.0pt);
\draw[color=black] (4.13,5.15) node {$v_2$};
\end{scriptsize}
\end{tikzpicture}
\vfill
\centering{(a)A T-mesh $\mathscr{T}$}
\end{minipage}
\hfill 
\begin{minipage}[b]{0.48\textwidth}
\centering
\begin{tikzpicture}[line cap=round,line join=round,>=triangle 45,x=1.0cm,y=1.0cm,scale=1]

\draw [line width=1.pt] (3,5.5)-- (3.,2.);
\draw [line width=1.pt] (1.5,4)-- (5.5,4.);
\draw [line width=1.pt] (2.,3.)-- (5.5,3);
\draw [line width=1.pt] (4,1.5)-- (4.,5.);

\begin{scriptsize}
\draw [fill=black] (3,5.) circle (2.0pt);
\draw [fill=black] (3,3.) circle (2.0pt);
\draw [fill=black] (3,4.) circle (2.0pt);
\draw [fill=black] (4,3.) circle (2.0pt);
\draw [fill=black] (4,4.) circle (2.0pt);
\draw [fill=black] (5,4.) circle (2.0pt);
\draw [fill=black] (5,3.) circle (2.0pt);
\draw [fill=black] (2,4.) circle (2.0pt);
\draw [fill=black] (3,5.5) circle (2.0pt);
\draw[color=black] (3.13,5.65) node {$v_1$};
\draw [fill=black] (3.,2.) circle (2.0pt);
\draw[color=black] (3.14,2.1) node {$v_7$};
\draw [fill=black] (1.5,4) circle (2.0pt);
\draw[color=black] (1.64,4.15) node {$v_3$};
\draw [fill=black] (5.5,4.) circle (2.0pt);
\draw[color=black] (5.64,4.15) node {$v_4$};
\draw [fill=black] (2.,3.) circle (2.0pt);
\draw[color=black] (2.13,3.15) node {$v_5$};
\draw [fill=black] (5.5,3) circle (2.0pt);
\draw[color=black] (5.65,3.15) node {$v_6$};
\draw [fill=black] (4,1.5) circle (2.0pt);
\draw[color=black] (4.13,1.34) node {$v_8$};
\draw [fill=black] (4.,5.) circle (2.0pt);
\draw[color=black] (4.13,5.15) node {$v_2$};
\end{scriptsize}
\end{tikzpicture}
\vfill
\centering{(b) T-connected component $T(\mathscr{T})$}
\end{minipage}

\caption{\label{fig:T-connected component}A T-mesh and the corresponding T-connected component}
\end{figure}

Let $\mathscr{F}$ denote the set of all rectangular faces. The polynomial spline space of bi-degree $(d_1,d_2)$ with the smoothness order $(\mu_1,\mu_2)$ over $\mathscr{T}$ is defined as
$$S_{d_1,d_2}^{\mu_1,\mu_2}(\mathscr{T}) = \left\{ s(x,y) \in C^{\mu_1,\mu_2}(\Phi) : s|_{\phi} \in \mathbb{P}_{d_1} \otimes \mathbb{P}_{d_2},\ \forall \phi \in \mathscr{F} \right\},$$
where $\mathbb{P}_{d_1} \otimes \mathbb{P}_{d_2}$ is the tensor-product polynomial space of bidegree $(d_1,d_2)$, and $C^{\mu_1,\mu_2}(\Phi)$ consists of bivariate functions that are $\mu_1$-times continuously differentiable in the $x$-direction and $\mu_2$-times in the $y$-direction over the domain $\Phi$ occupied by $\mathscr{F}$. This paper focuses on the highest order of smoothness case: $d_1 = d_2 = d$ and $\mu_1 = \mu_2 = d-1$. The spline space is simply denoted as $S_d(\mathscr T)$ in this case.

According to the smoothing cofactor method \cite{zeng2015}, the $C^{d-1}$ continuity constraints across a horizontal T $l$-edge with vertices of $x$-coordinates $x_1 < x_2 < \cdots < x_r$ translate into linear relations among the associated vertex cofactors $\boldsymbol{\delta}_l = (\delta_1, \delta_2, \dots, \delta_r)^T \in \mathbb{R}^r$ (see Figure~\ref{fig: vertex cofactors}):
\begin{equation}\label{eq4}
\sum\limits_{i=1}^{r}\delta_{i}(x-x_i)^d=0.
\end{equation}

\begin{figure}[htbp]
    \centering
\definecolor{black}{rgb}{0.26666666666666666,0.26666666666666666,0.26666666666666666}
\begin{tikzpicture}[line cap=round,line join=round,>=triangle 45,x=1cm,y=1cm,scale=2]
\draw [line width=1pt] (1,0.5)-- (1,2.5);
\draw [line width=1pt] (1,1.5)-- (5,1.5);
\draw [line width=1pt] (5,2.5)-- (5,0.5);
\draw [line width=1pt] (1.5,1.5)-- (1.5,2);
\draw [line width=1pt] (2,1.5)-- (2,1);
\draw [line width=1pt] (2.5,2)-- (2.5,1);
\draw [line width=1pt] (3,1.5)-- (3,1);
\draw [line width=1pt] (4.5,1.5)-- (4.5,1);
\draw [line width=1pt] (4,1.5)-- (4,2);
\draw (0.94,1.5) node[anchor=north west] {$\delta_1$};
\draw (1.35,1.5) node[anchor=north west] {$\delta_2$};
\draw (1.85,1.85) node[anchor=north west] {$\delta_3$};
\draw (2.48,1.5) node[anchor=north west] {$\delta_4$};
\draw (2.94,1.5) node[anchor=north west] {$\delta_5$};
\draw (3.82,1.5) node[anchor=north west] {$\delta_{r-2}$};
\draw (4.37,1.85) node[anchor=north west] {$\delta_{r-1}$};
\draw (4.95,1.5) node[anchor=north west] {$\delta_r$};
\draw (3.32,1.5) node[anchor=north west] {$\ldots$};
\begin{scriptsize}
\draw [fill=black] (1,1.5) circle (1pt);
\draw [fill=black] (1.5,1.5) circle (1pt);
\draw [fill=black] (2,1.5) circle (1pt);
\draw [fill=black] (2.5,1.5) circle (1pt);
\draw [fill=black] (3,1.5) circle (1pt);
\draw [fill=black] (4,1.5) circle (1pt);
\draw [fill=black] (4.5,1.5) circle (1pt);
\draw [fill=black] (5,1.5) circle (1pt);
\end{scriptsize}
\end{tikzpicture}
    \caption{\label{fig: vertex cofactors}Vertex cofactors along a horizontal T $l$-edge}
\end{figure}

Expanding Eq.~\eqref{eq4} yields a local homogeneous linear system $\mathbf{P}_l \boldsymbol{\delta}_l = \mathbf{0}$, expressed in matrix form as:
\begin{equation}
\begin{pmatrix}
	1 & 1 & \cdots & \cdots & 1 \\
	x_1 & x_2 & \cdots & \cdots & x_r\\
	x_1^2 & x_2^2 & \cdots & \cdots & x_r^2\\
	\cdots & \cdots & \cdots & \cdots & \cdots\\
	x_1^{d-1} & x_2^{d-1} & \cdots & \cdots & x_r^{d-1}\\
	x_1^d & x_2^d & \cdots & \cdots & x_r^d
\end{pmatrix}
\begin{pmatrix}
	\delta_1 \\
	\delta_2 \\
	\delta_3 \\
	\vdots\\
	\delta_{r-1} \\
	\delta_{r} 
\end{pmatrix}
=\begin{pmatrix}
	0 \\
	0 \\
	0 \\
	\vdots\\
	0 \\
	0
\end{pmatrix}.
\label{gcc}
\end{equation}

For a T $l$-edge $l$ containing $n(l)$ vertex cofactors, the coefficient matrix in \eqref{gcc} is a $(d+1)\times n(l)$ Vandermonde matrix. When $n(l)\le d+1$, the Vandermonde matrix has full column rank, and its null space is zero. Therefore all the vertex cofactors on $l$ must vanish, that is,  $l$ contributes no  degree of freedom to the dimension of the spline space. In this case, we call $l$ \emph{vanishable}. Throughout this paper, we assume that the T-connected components do not contain any vanishable T $l$-edge, or equivalently,
\[
 n(l)\ge d+2 \qquad\text{for every T $l$-edge }l.
\]
Consequently every Vandermonde matrix associated with a T l-edge has row rank $d+1$ and local nullity $n(l)-d-1\ge1$ ~\cite{huang2025preliminarystudydimensionalstability}. 

By expanding the local coefficient matrix $\mathbf{P}_{l_i}$ to match the global cofactor vector $\boldsymbol{\delta} = (\delta_1, \dots, \delta_v)^T \in \mathbb{R}^v$ via zero-padding for vertices not belonging to $l_i$, we obtain a globalized system of linear equations, denoted as $\mathscr{P}_{l_i} \boldsymbol{\delta} = \mathbf{0}$.

\begin{definition}[\cite{zeng2015}]\label{def:cvs}
Let $T(\mathscr{T})$ be the T-connected component of $\mathscr{T}$ consisting of T $l$-edges $l_1, \dots, l_t$. The conformality vector space (\textsf{CVS}) is defined as:
$$\textsf{CVS}[T(\mathscr{T})] := \bigl\{ \boldsymbol{\delta} \in \mathbb{R}^v : \mathscr{P}_{l_i} \boldsymbol{\delta} = \mathbf{0},\ 1 \leq i \leq t \bigr\}.$$
The coefficient matrix associated with the global homogeneous system is called the global \textbf{conformality matrix}, denoted by $M(T(\mathscr{T}))$.
\end{definition}

The dimension formula is then given by the following result~\cite{zeng2015}.
\begin{theorem}[\cite{zeng2015}]\label{thm: dim formula}
For a T-mesh $\mathscr{T}$,
$$\dim S_d(\mathscr{T}) = (d+1)^2 + c(d+1) + n_v - \mathrm{rank}\bigl(M(T(\mathscr{T}))\bigr),$$
where $c$ is the number of cross-cuts and $n_v$ is the number of interior vertices.
\end{theorem}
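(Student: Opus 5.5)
The plan is to prove Theorem~\ref{thm: dim formula} with the smoothing cofactor method. We build a spline in $S_d(\mathscr{T})$ incrementally by sweeping across the mesh. Each piece of data we add (cofactors at vertices, cofactors on cross-cuts) is subject only to linear constraints. The dimension is then the number of free parameters minus the number of independent constraints.

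\textbf{Step 1: local form of a jump.} Across an edge on the horizontal line $y=y_0$, the difference of the two adjacent bi-degree $(d,d)$ polynomials must vanish to order $d$ in $y$. Hence it has the form $(y-y_0)^d\,q(x)$ with $q\in\mathbb{P}_d$; this $q$ is the smoothing cofactor. Going around an interior vertex, the cofactors must satisfy a conformality condition, and $C^{d-1}$ smoothness in $x$ then forces $q$ to change across a vertex $(x_i,y_0)$ only by a multiple $\delta_i (x-x_i)^d$. So along any horizontal $l$-edge the cofactor is piecewise of the form $q_0(x)+\sum_{j\le i}\delta_j (x-x_j)^d$. The same holds for vertical $l$-edges.

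\textbf{Step 2: count the free data and the constraints.} Fix the polynomial on one face; this gives $(d+1)^2$ parameters. Each cross-cut carries a full cofactor polynomial in $\mathbb{P}_d$ with no end conditions, contributing $d+1$ parameters, so $c(d+1)$ in total.

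Every interior vertex carries a scalar $\delta$. This $\delta$ is the same number whether read along the horizontal line or the vertical line through the vertex, because at a crossing the jump of the mixed term $(x-x_i)^d(y-y_0)^d$ is shared by both lines. That gives $n_v$ parameters.

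Rays impose no constraint: the cofactor can start from zero at the T-junction end and runs freely to the boundary. A T $l$-edge, however, has its cofactor vanishing at both ends. Starting from zero at the left end, the condition at the right end is exactly $\sum_i\delta_i(x-x_i)^d\equiv 0$, which is \eqref{eq4}, i.e.\ $\mathbf{P}_l\boldsymbol\delta_l=\mathbf 0$.

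\textbf{Step 3: show the parametrization is a bijection.} The map from $\{\text{base polynomial},\ \text{cross-cut cofactors},\ \boldsymbol\delta\}$ to splines is linear and injective. Its image is precisely the set of data satisfying all T $l$-edge constraints, that is, $\boldsymbol\delta$ lying in the kernel of $M(T(\mathscr{T}))$ after gluing the local systems $\mathscr P_{l_i}$. Counting, the dimension equals $(d+1)^2+c(d+1)+n_v-\operatorname{rank}M$.

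\textbf{Main obstacle.} The hard part is showing that these are the \emph{only} constraints. We must check that consistency around every face and every vertex is automatic once the $l$-edge conditions hold. The plan is to reconstruct the spline face by face along a sweep order, e.g.\ an ordering of faces from the bottom-left corner. The polynomial on each new face is determined through a crossed edge, and the result must not depend on the path taken. Path independence reduces to two facts: the local vertex identity (the sum of jumps around a vertex is zero, which holds by construction of the $\delta$'s), and the simple connectivity of $\Phi$. Vertices on T $l$-edges that are not part of the connected component $T(\mathscr T)$ only enter rays or cross-cuts, so they remain free; this justifies counting all $n_v$ vertices while constraining only those on T $l$-edges.
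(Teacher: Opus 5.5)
Your proposal is correct and is essentially the smoothing-cofactor argument of \cite{zeng2015}, which the paper quotes without reproducing. It has the same ingredients: a base polynomial, a free initial cofactor on each cross-cut, one shared scalar $\delta$ per interior vertex, and the end conditions $\sum_i\delta_i(x-x_i)^d\equiv 0$ on the T $l$-edges as the only constraints, with well-definedness coming from the vertex conformality identities plus simple connectivity (the dual-graph cycles around interior vertices generate all cycles). Two wording slips do not affect the argument: in Step 3 it is the inverse map, from splines to data, whose image is the constrained data set, and the vertices that remain free are those lying on no T $l$-edge at all.
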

Thus, the study of $\dim S_d(\mathscr T)$ pins down to the study of $\mathrm{rank}(M(T(\mathscr{T})))$. 


\subsection{Decomposition of T-connected components}\label{subsec:decomposition}

By Theorem~\ref{thm: dim formula}, computing the spline space dimension reduces to finding the rank of the global conformality matrix associated with the T-connected component $T(\mathscr{T})$. To analyze where the dimensional instability occurs, this subsection introduces a decomposition framework for $T(\mathscr{T})$ based on \cite{huang2024stability}. We first introduce diagonalizable T-meshes, which have stable dimensions and serve as the basis for partitioning general T-connected components into diagonalizable part and non-diagonalizable part.

\begin{definition}[\cite{dim2016}]\label{def:reasonable_order}
For a given T-mesh $\mathscr{T}$, if there exists an ordering of all the T $l$-edges of its T-connected component $T(\mathscr{T})$, say $l_1 \succ l_2 \succ \ldots \succ l_t$, such that $r(l_i) \geq d+1$, where $r(l_i)$ is the number of remaining vertices on $l_i$ after removing the intersection vertices of $l_i$ with the preceding T $l$-edges $l_j$ ($j=1,2,\cdots,i-1$), then this ordering is called a \textbf{reasonable order} and $\mathscr{T}$ is called a \textbf{diagonalizable T-mesh}.
\end{definition}

According to \cite{dim2016}, the dimension of the spline space $S_d(\mathscr{T})$ over a diagonalizable T-mesh is stable and can be explicitly computed as follows.

\begin{proposition}[\cite{dim2016}]\label{prop:diagonalizable_dim}
Suppose that a T-mesh $\mathscr{T}$ is diagonalizable. Then the dimension of the spline space $S_d(\mathscr{T})$ is stable and can be explicitly computed as:
\begin{equation}\label{eq:diagonalizable_formula}
	\dim S_{d}(\mathscr{T})=(d+1)^2+(c-t)(d+1)+n_v,
\end{equation}
where $c$ is the number of cross-cuts of $\mathscr{T}$, $t$ is the number of T $l$-edges of $T(\mathscr{T})$, and $n_v$ is the number of interior vertices.
\end{proposition}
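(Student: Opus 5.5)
By Theorem~\ref{thm: dim formula}, it suffices to show that for a diagonalizable T-mesh the global conformality matrix $M(T(\mathscr{T}))$ has full row rank, namely
\[
\mathrm{rank}\bigl(M(T(\mathscr{T}))\bigr)=t(d+1),
\]
regardless of the coordinates of the grid lines. Substituting this value then gives $(d+1)^2+c(d+1)+n_v-t(d+1)$, which is formula \eqref{eq:diagonalizable_formula}, and stability follows because the right-hand side depends only on the topology.

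The matrix $M(T(\mathscr{T}))$ has exactly $t(d+1)$ rows: one block $\mathscr{P}_{l_i}$ of $d+1$ rows for each T $l$-edge. Thus the rank is at most $t(d+1)$, and only the lower bound needs proof. I would fix a reasonable order $l_1\succ l_2\succ\cdots\succ l_t$ as in Definition~\ref{def:reasonable_order}. I would then order the block rows in the reverse order $l_t,l_{t-1},\dots,l_1$. For each $l_i$ I would choose $d+1$ of its \emph{remaining} vertices, meaning vertices of $l_i$ that do not lie on any preceding $l_j$ with $j<i$; this is possible since $r(l_i)\ge d+1$.

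The key claim is that the chosen column sets $C_i$ are pairwise disjoint, and that column set $C_i$ vanishes in every block $\mathscr{P}_{l_j}$ with $j<i$. Both parts follow from the choice of $C_i$. A vertex in $C_i$ lies on $l_i$ but on no $l_j$ with $j<i$, so it is not a vertex of $l_j$ and $\mathscr{P}_{l_j}$ has a zero in that column. In particular $C_i\cap C_j=\emptyset$ for $j<i$, because every vertex in $C_j$ lies on $l_j$.

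Now consider the square $t(d+1)\times t(d+1)$ submatrix with block rows $l_1,\dots,l_t$ and block columns $C_1,\dots,C_t$. Its $(j,i)$ block, with rows $\mathscr{P}_{l_j}$ and columns $C_i$, is zero whenever $i>j$. The submatrix is therefore block lower triangular. Its diagonal blocks are the $(d+1)\times(d+1)$ Vandermonde matrices built from $d+1$ distinct coordinates along $l_i$, so each is nonsingular. Hence the submatrix is nonsingular, and $\mathrm{rank}\,M=t(d+1)$.

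The main subtlety is the bookkeeping for multi-vertices. A vertex shared by two T $l$-edges contributes one cofactor column appearing in both blocks, and the argument must use the same vertex indexing as in Definition~\ref{def:cvs}. The genuine point is that the triangular structure depends only on incidence, which is topological. The nonsingularity of the diagonal blocks holds for every geometric realization, because the vertices along a line have distinct coordinates. This is exactly what gives stability.
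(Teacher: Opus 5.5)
Your proposal is correct and is essentially the standard argument behind the cited result (the paper gives no proof of its own and defers to \cite{dim2016}). Under a reasonable order, the vertices split into the remaining-vertex groups of $l_1,\dots,l_t$, and with this grouping $M(T(\mathscr{T}))$ is block lower triangular with full-row-rank Vandermonde diagonal blocks, so $\mathrm{rank}\,M(T(\mathscr{T}))=t(d+1)$ and Theorem~\ref{thm: dim formula} gives the formula. The only blemish is expository: you first announce the row order $l_t,\dots,l_1$ but then correctly use $l_1,\dots,l_t$; either works provided the rows and the column groups $C_i$ are ordered the same way.
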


\begin{example}\label{ex:diagonalizable_tmesh}
Consider the polynomial spline space $S_3(\mathscr{T})$ over the T-mesh $\mathscr{T}$ illustrated in Fig.~\ref{fig:T-connected component}(a). The associated T-connected component $T(\mathscr{T})$ consists of four T $l$-edges: $v_1v_7$, $v_5v_6$, $v_2v_8$, and $v_3v_4$, as depicted in Fig.~\ref{fig:T-connected component}(b). 

We can verify that $\mathscr{T}$ is a diagonalizable T-mesh by constructing a reasonable ordering for these T-edges. Specifically, let the sequence of the interior edges be sorted as follows:
\begin{equation*}
v_1v_7 \succ v_5v_6 \succ v_2v_8 \succ v_3v_4.
\end{equation*}
According to this specific ordering, the number of vertices $r(\cdot)$ on each edge can be sequentially evaluated as:
\begin{equation*}
r(v_1v_7) = 5, \quad r(v_5v_6) = 4, \quad r(v_2v_8) = 4, \quad \text{and} \quad r(v_3v_4) = 4.
\end{equation*}
They all satisfy the condition $r(\cdot)\ge d+1$, thus this ordering is a reasonable ordering. The existence of such a reasonable ordering guarantees that the T-mesh $\mathscr{T}$ is a diagonalizable T-mesh.
\end{example}

In general, a T-connected component may not be diagonalizable. In this case, we decompose the component to separate the non-diagonalizable parts from the diagonalizable ones. We first define a sub-component and a bipartite partition of $T(\mathscr{T})$.

\begin{definition}[\cite{huang2024stability}]\label{def:bipartite_partition}
A subset of T $l$-edges of $T(\mathscr{T})$, together with all the vertices lying on them, is called a sub-component of $T(\mathscr{T})$.
Let $T(\mathscr{T})$ be the T-connected component of a T-mesh $\mathscr{T}$ and $T \subset T(\mathscr{T})$ be a sub-component. Then we call $\{T, T(\mathscr{T})\setminus T\}$ a \textbf{bipartite partition} of $T(\mathscr{T})$. 
\end{definition}

Based on the diagonalizability of sub-components, the regular and complete partitions are defined as follows.

\begin{definition}[\cite{huang2024stability}]\label{def:regular_partition}
Let $T(\mathscr{T})$ be the T-connected component of a non-diagonalizable T-mesh $\mathscr{T}$, and let $\{T_1, T_2\}$ be a bipartite partition of $T(\mathscr{T})$. If the sub-component $T_1$ does not admit a reasonable order while the sub-component $T_2$ does, then $T_1$ is called a \textbf{non-diagonalizable component} and $T_2$ is called a \textbf{diagonalizable component} of $T(\mathscr T)$. Under these conditions, $\{T_1, T_2\}$ is referred to as a \textbf{regular partition} of $T(\mathscr T)$.
\end{definition}

\begin{definition}[\cite{huang2024stability}]\label{def:complete_partition}
Let $\mathscr{T}$ be a non-diagonalizable T-mesh and $T(\mathscr{T})$ be its T-connected component. Suppose $\{T_1, T_2\}$ is a regular partition of $T(\mathscr{T})$ with $T_1 \subset T(\mathscr{T})$. If there does not exist another regular partition $\{T_1', T_2'\}$ of $T(\mathscr{T})$ such that $T_1'$ is a proper subset of $T_1$, then $\{T_1, T_2\}$ is called a \textbf{complete partition} of $T(\mathscr{T})$, and $T_1$ is called the \textbf{completely non-diagonalizable component} (CNDC) of $T(\mathscr{T})$.
\end{definition}

An algorithm to compute the complete partition of $T(\mathscr{T})$ was proposed in \cite{huang2024stability}. Although the intermediate verification steps in the practical implementation depend on the chosen ordering of the T $l$-edges, it was proved in \cite{huang2025preliminarystudydimensionalstability} that the final decomposition result is unique and independent of the edge ordering.

\begin{proposition}[\cite{huang2025preliminarystudydimensionalstability}]\label{prop:uniqueness_complete_partition}
The complete partition of a $T$-connected component $T(\mathscr{T})$ is unique, implying that the number of $T$ $l$-edges and vertices in CNDC constitutes a topological invariant.
\end{proposition}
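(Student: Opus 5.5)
The plan is to show that the CNDC is the unique minimum of a closure system. I would first show that the family of sub-components admitting a reasonable order is closed under unions. Then the complement of the largest such sub-component is the unique CNDC. Topological invariance follows at once, because the whole construction uses only the incidence structure of T $l$-edges and vertices, never coordinates.

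\textbf{Step 1 (union closure).} Let $A,B\subset T(\mathscr{T})$ be diagonalizable sub-components with reasonable orders $a_1\succ\cdots\succ a_p$ and $b_1\succ\cdots\succ b_q$. The ordering matters as follows. By Definition~\ref{def:regular_partition}, $T_2$ is diagonalizable as a sub-component on its own. However, in the complete partition the diagonalizable part is the one "peeled off last", so the relevant count is relative to the edges \emph{not yet removed}.

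To pin this down, I would first reformulate diagonalizability of $T_2$ via reversed orders. A reasonable order for $T_2$, read backwards, says we can repeatedly delete a T $l$-edge $l$ whose number of vertices remaining, after removing intersections with the still-present edges, is at least $d+1$. Here "still-present" means the edges that precede $l$ in the reasonable order, i.e.\ those not yet deleted in the backward reading.

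With this reformulation, concatenation works. List the edges of $A$ in their order, then the edges of $B\setminus A$ in the order induced from $B$. For an edge $b\in B\setminus A$, the set of edges preceding it in the new order has the form $A\cup\{b_j\in B\setminus A: b_j\succ b\}$. Compared with its original predecessors in $B$, this set may contain extra edges of $A$, so $r(b)$ could drop. This is the crux: the naive concatenation can fail.

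So I would instead try interleaving by merging from the top. Since $r(\cdot)$ only involves intersections with predecessors, placing \emph{fewer} edges before $b$ helps. The right merge is therefore: put $B$'s order first, then append the edges of $A\setminus B$ in $A$'s induced order. An edge $a\in A\setminus B$ is then preceded by $B\cup\{\text{earlier }a_j\}$, and the same issue arises symmetrically.

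\textbf{Main obstacle and fallback.} The hardest step is exactly this monotonicity: showing that adding predecessors never destroys reasonableness in the relevant situation. If that fails, I would switch the characterization to the complement. Define the CNDC directly as the largest sub-component $C$ in which every T $l$-edge has at most $d$ vertices not lying on other edges of $C$, i.e.\ at least $n(l)-d$ intersections inside $C$. This property \emph{is} union-closed by monotonicity: if $C_1$ and $C_2$ both have it, then each edge of $C_1\cup C_2$ has at least as many intersections inside the union as inside $C_i$.

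I would then prove that $T_1$ is a CNDC if and only if $T_1$ equals this maximal "stuck" set $C^*$.

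\textbf{Step 2.} If $\{T_1,T_2\}$ is a regular partition, then $C^*\subset T_1$. Suppose instead that some edge of $C^*$ lies in $T_2$, and take the first such edge $l$ in the reasonable order of $T_2$. All of its $C^*$-neighbours lie either in $T_1$ or after it, so its count of vertices remaining is at most $d$ in the predecessor sense. The direction of the inequality here needs to be fixed against Definition~\ref{def:reasonable_order}.

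\textbf{Step 3.} $\{C^*,T(\mathscr{T})\setminus C^*\}$ is itself a regular partition. Here $T(\mathscr{T})\setminus C^*$ is ordered greedily: repeatedly choose an edge outside $C^*$ with enough free vertices, which exists by the maximality of $C^*$. Also, $C^*$ is nonempty and non-diagonalizable since the mesh is non-diagonalizable.

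Hence $C^*$ is the unique minimal $T_1$, so the complete partition is unique. Its edge and vertex counts depend only on incidence data, which gives topological invariance.
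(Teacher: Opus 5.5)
The paper does not prove this proposition; it imports it from the cited reference. The only traces of an argument are the peeling description in the proof of Lemma~\ref{lem:mourrain_cndc_order_reduction} and the remark in Corollary~\ref{cor:directional_bound} that each CNDC edge has at most $d$ vertices off the other CNDC edges. Your fallback, identifying the CNDC with the largest ``stuck'' set $C^*$, is exactly that picture. Steps~2--3 give a correct proof once two points are repaired.

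It also yields the order-independence the paper attributes to the extraction algorithm. An edge of $C^*$ is never peelable while $C^*$ is present, so by Step~3 every greedy peeling, in any order, stops exactly at $C^*$. Step~1 can be deleted. If you want to keep it, the merge that works is $B\setminus A$ in the order induced from $B$, followed by $A$ in its own order, with everything outside $A\cup B$ counted as preceding. Then edges of $B\setminus A$ only lose predecessors, and edges of $A$ keep exactly their old ones.

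\textbf{First repair.} In Step~2, take the \emph{last} edge $l$ of $C^*\cap T_2$ in the reasonable order of $T_2$ (the first one peeled), not the first. Then every other edge of $C^*$ lies in $T_1$ or precedes $l$. So every vertex of $l$ lying on another edge of $C^*$ is discarded in computing $r(l)$, which gives $r(l)\le d$, a contradiction. If you take the first edge instead, its $C^*$-neighbours in $T_2$ come after it and are not discarded, so nothing is bounded.

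\textbf{Second repair.} The argument needs the reading of Definition~\ref{def:regular_partition} in which $r(l)$ for $l\in T_2$ also discards the intersections of $l$ with $T_1$, i.e.\ $T_1$ precedes all of $T_2$. You mention this in Step~1 but use it silently in Step~2; state it explicitly as the definition you work with.

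This is not cosmetic. If $T_2$ only had to be diagonalizable on its own, uniqueness would already fail at the level of incidence data. For $d=3$, take seven T $l$-edges with five vertices each, whose multi-vertices are exactly $a\cap b$, $b\cap c$, $c\cap e$, $e\cap a$, $e\cap f$, $f\cap g$, $g\cap h$, $h\cap e$. Then $\{a,b,c,e\}$ and $\{e,f,g,h\}$ are both minimal non-diagonalizable sets. Their complements are paths, hence diagonalizable on their own, so each gives a complete partition. Under the relative reading, by contrast, the unique CNDC is all seven edges. The relative reading is also the one the paper itself relies on in Corollary~\ref{cor:directional_bound} and Lemma~\ref{lem:mourrain_cndc_order_reduction}.
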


\begin{example}\label{ex:complete_partition}
Consider the polynomial spline space $S_3(\mathscr{T})$ over a T-mesh $\mathscr{T}$ shown in Fig.~\ref{fig: a general T-mesh}. The associated T-connected component $T(\mathscr{T})$ consists of five T $l$-edges: $v_1v_7$, $v_3v_4$, $v_5v_6$, $v_2v_8$, and $v_9v_{10}$, which are collectively detailed in Fig.~\ref{fig:T-mesh decomposition framework}(a).

It is easy to verify that these edges cannot form a reasonable ordering under any possible permutation; thus, $\mathscr{T}$ is not a diagonalizable T-mesh. Therefore, we can perform a complete partition on $T(\mathscr{T})$ according to the algorithm proposed in \cite{huang2024stability}. The detailed partition results are illustrated in Fig.~\ref{fig:T-mesh decomposition framework}(b) and (c).
\end{example}

\begin{figure}
\centering
\begin{tikzpicture}[line cap=round,line join=round,>=triangle 45,x=1.0cm,y=1.0cm,scale=1.5]
\draw [line width=1.pt] (1.,1.)-- (6.,1.);
\draw [line width=1.pt] (1.,1.)-- (1.,6.);
\draw [line width=1.pt] (1.,6.)-- (6.,6.);
\draw [line width=1.pt] (6.,6.)-- (6.,1.);
\draw [line width=1.pt] (1.,5.)-- (6.,5.);
\draw [line width=1.pt] (1.,2.)-- (6.,2.);
\draw [line width=1.pt] (2.,6.)-- (2.,1.);
\draw [line width=1.pt] (1.5,6.)-- (1.5,1.);
\draw [line width=1.pt] (1.,5.5)-- (6.,5.5);
\draw [line width=1.pt] (1.,1.5)-- (6.,1.5);
\draw [line width=1.pt] (5.5,6.)-- (5.5,1.);
\draw [line width=2.pt] (3,5.5)-- (3.,2.);
\draw [line width=2.pt] (1.5,4)-- (5.,4.);
\draw [line width=2.pt] (2.,3.)-- (5.5,3);
\draw [line width=2.pt] (4,1.5)-- (4.,5.);
\draw [line width=2.pt] (5.,1.5)-- (5.,5.5);
\begin{scriptsize}
\draw [fill=black] (3,5.5) circle (2.0pt);
\draw[color=black] (3.13,5.65) node {$v_1$};
\draw [fill=black] (5,5.5) circle (2.0pt);
\draw[color=black] (5.13,5.65) node {$v_9$};
\draw [fill=black] (3.,2.) circle (2.0pt);
\draw[color=black] (3.14,2.15) node {$v_7$};
\draw [fill=black] (1.5,4) circle (2.0pt);
\draw[color=black] (1.64,4.15) node {$v_3$};
\draw [fill=black] (5.,4.) circle (2.0pt);
\draw[color=black] (5.14,4.15) node {$v_4$};
\draw [fill=black] (2.,3.) circle (2.0pt);
\draw[color=black] (2.13,3.15) node {$v_5$};
\draw [fill=black] (5.5,3) circle (2.0pt);
\draw[color=black] (5.65,3.15) node {$v_6$};
\draw [fill=black] (4,1.5) circle (2.0pt);
\draw[color=black] (4.13,1.34) node {$v_8$};
\draw [fill=black] (5,1.5) circle (2.0pt);
\draw[color=black] (5.13,1.34) node {$v_{10}$};
\draw [fill=black] (4.,5.) circle (2.0pt);
\draw[color=black] (4.13,5.15) node {$v_2$};

\draw [fill=black] (3.,4.) circle (2.0pt);
\draw[color=black] (3.2,4.15) node {$v_{11}$};
\draw [fill=black] (3.,3.) circle (2.0pt);
\draw[color=black] (3.2,3.15) node {$v_{12}$};
\draw [fill=black] (4.,4.) circle (2.0pt);
\draw[color=black] (4.2,4.15) node {$v_{13}$};
\draw [fill=black] (4.,3.) circle (2.0pt);
\draw[color=black] (4.2,3.15) node {$v_{14}$};
\draw [fill=black] (5.,3.) circle (2.0pt);
\draw[color=black] (5.2,3.15) node {$v_{15}$};
\draw [fill=black] (3.,5.) circle (2.0pt);
\draw[color=black] (3.2,5.15) node {$v_{16}$};
\draw [fill=black] (2.,4.) circle (2.0pt);
\draw[color=black] (2.2,4.15) node {$v_{17}$};
\draw [fill=black] (4.,2.) circle (2.0pt);
\draw[color=black] (4.2,2.15) node {$v_{18}$};
\draw [fill=black] (5.,2.) circle (2.0pt);
\draw[color=black] (5.2,2.15) node {$v_{19}$};
\draw [fill=black] (5.,5.) circle (2.0pt);
\draw[color=black] (5.2,5.15) node {$v_{20}$};
\end{scriptsize}
\end{tikzpicture}
\caption{\label{fig: a general T-mesh}An example of a general T-mesh.}
\end{figure}

\begin{figure}[htbp]
\centering
\begin{minipage}[c]{0.48\textwidth}
\centering
\begin{tikzpicture}[line cap=round,line join=round,>=triangle 45,x=1.0cm,y=1.0cm,scale=1]

\draw [line width=1.pt] (3,5.5)-- (3.,2.);       
\draw [line width=1.pt] (1.5,4)-- (5.,4.);       
\draw [line width=1.pt] (2.,3.)-- (5.5,3);       
\draw [line width=1.pt] (4,1.5)-- (4.,5.);       
\draw [line width=1.pt] (5.,1.5)-- (5.,5.5);     

\begin{scriptsize}
\draw [fill=black] (3,5.5) circle (2.0pt);
\draw[color=black] (3.2,5.65) node {$v_1$};
\draw [fill=black] (5,5.5) circle (2.0pt);
\draw[color=black] (5.2,5.65) node {$v_9$};
\draw [fill=black] (3.,2.) circle (2.0pt);
\draw[color=black] (3.2,2.1) node {$v_7$};
\draw [fill=black] (1.5,4) circle (2.0pt);
\draw[color=black] (1.54,4.15) node {$v_3$};
\draw [fill=black] (5.,4.) circle (2.0pt);
\draw[color=black] (5.2,4.15) node {$v_4$};
\draw [fill=black] (2.,3.) circle (2.0pt);
\draw[color=black] (2.2,3.15) node {$v_5$};
\draw [fill=black] (5.5,3) circle (2.0pt);
\draw[color=black] (5.65,3.15) node {$v_6$};
\draw [fill=black] (4,1.5) circle (2.0pt);
\draw[color=black] (4.2,1.34) node {$v_8$};
\draw [fill=black] (5,1.5) circle (2.0pt);
\draw[color=black] (5.2,1.34) node {$v_{10}$};
\draw [fill=black] (4.,5.) circle (2.0pt);
\draw[color=black] (4.2,5.15) node {$v_2$};

\draw [fill=black] (3.,4.) circle (2.0pt);
\draw[color=black] (3.2,4.15) node {$v_{11}$};
\draw [fill=black] (3.,3.) circle (2.0pt);
\draw[color=black] (3.2,3.15) node {$v_{12}$};
\draw [fill=black] (4.,4.) circle (2.0pt);
\draw[color=black] (4.2,4.15) node {$v_{13}$};
\draw [fill=black] (4.,3.) circle (2.0pt);
\draw[color=black] (4.2,3.15) node {$v_{14}$};
\draw [fill=black] (5.,3.) circle (2.0pt);
\draw[color=black] (5.2,3.15) node {$v_{15}$};
\draw [fill=black] (3.,5.) circle (2.0pt);
\draw[color=black] (3.2,5.15) node {$v_{16}$};
\draw [fill=black] (2.,4.) circle (2.0pt);
\draw[color=black] (2.2,4.15) node {$v_{17}$};
\draw [fill=black] (4.,2.) circle (2.0pt);
\draw[color=black] (4.2,2.15) node {$v_{18}$};
\draw [fill=black] (5.,2.) circle (2.0pt);
\draw[color=black] (5.2,2.15) node {$v_{19}$};
\draw [fill=black] (5.,5.) circle (2.0pt);
\draw[color=black] (5.2,5.15) node {$v_{20}$};
\end{scriptsize}
\end{tikzpicture}
\\ \vspace{0.2cm}
\centering{(a) T-connected component $T(\mathscr{T})$}
\end{minipage}
\hfill 
\begin{minipage}[c]{0.48\textwidth}
\centering
\begin{tikzpicture}[line cap=round,line join=round,>=triangle 45,x=1.0cm,y=1.0cm,scale=1]

\draw [line width=1.pt] (3,5.5)-- (3.,2.);       
\draw [line width=1.pt] (1.5,4)-- (5.,4.);       
\draw [line width=1.pt] (2.,3.)-- (5.5,3);       
\draw [line width=1.pt] (4,1.5)-- (4.,5.);       

\begin{scriptsize}
\draw [fill=black] (3,5.5) circle (2.0pt);
\draw[color=black] (3.2,5.65) node {$v_1$};
\draw [fill=black] (3.,2.) circle (2.0pt);
\draw[color=black] (3.2,2.1) node {$v_7$};
\draw [fill=black] (1.5,4) circle (2.0pt);
\draw[color=black] (1.54,4.15) node {$v_3$};
\draw [fill=black] (5.,4.) circle (2.0pt);
\draw[color=black] (5.1,4.15) node {$v_4$};
\draw [fill=black] (2.,3.) circle (2.0pt);
\draw[color=black] (2.1,3.15) node {$v_5$};
\draw [fill=black] (5.5,3) circle (2.0pt);
\draw[color=black] (5.65,3.15) node {$v_6$};
\draw [fill=black] (4,1.5) circle (2.0pt);
\draw[color=black] (4.2,1.34) node {$v_8$};
\draw [fill=black] (4.,5.) circle (2.0pt);
\draw[color=black] (4.2,5.15) node {$v_2$};

\draw [fill=black] (3.,4.) circle (2.0pt);
\draw[color=black] (3.2,4.15) node {$v_{11}$};
\draw [fill=black] (3.,3.) circle (2.0pt);
\draw[color=black] (3.2,3.15) node {$v_{12}$};
\draw [fill=black] (4.,4.) circle (2.0pt);
\draw[color=black] (4.2,4.15) node {$v_{13}$};
\draw [fill=black] (4.,3.) circle (2.0pt);
\draw[color=black] (4.2,3.15) node {$v_{14}$};
\draw [fill=black] (5.,3.) circle (2.0pt);
\draw[color=black] (5.1,3.15) node {$v_{15}$};
\draw [fill=black] (3.,5.) circle (2.0pt);
\draw[color=black] (3.2,5.15) node {$v_{16}$};
\draw [fill=black] (2.,4.) circle (2.0pt);
\draw[color=black] (2.1,4.15) node {$v_{17}$};
\draw [fill=black] (4.,2.) circle (2.0pt);
\draw[color=black] (4.2,2.15) node {$v_{18}$};
\end{scriptsize}
\end{tikzpicture}
\\ \vspace{0.1cm}
\centering{(b) CNDC of $T(\mathscr{T})$}
\\ \vspace{0.5cm} 

\begin{tikzpicture}[line cap=round,line join=round,>=triangle 45,x=1.0cm,y=1.0cm,scale=1]
\draw [line width=1.pt] (5.,1.5)-- (5.,5.5);     

\begin{scriptsize}
\draw [fill=black] (5,5.5) circle (2.0pt);
\draw[color=black] (5.13,5.65) node {$v_9$};
\draw [fill=black] (5,1.5) circle (2.0pt);
\draw[color=black] (5.13,1.24) node {$v_{10}$};

\draw [fill=black] (5.,3.) circle (2.0pt);
\draw[color=black] (5.3,3.15) node {$v_{15}$};
\draw [fill=black] (5.,2.) circle (2.0pt);
\draw[color=black] (5.3,2.15) node {$v_{19}$};
\draw [fill=black] (5.,5.) circle (2.0pt);
\draw[color=black] (5.3,5.15) node {$v_{20}$};
\end{scriptsize}
\end{tikzpicture}
\\ \vspace{0.1cm}
\centering{(c) Diagonalizable component of $T(\mathscr{T})$}
\end{minipage}

\caption{\label{fig:T-mesh decomposition framework}The complete partition of $T(\mathscr{T})$.}
\end{figure}

According to Proposition~\ref{prop:uniqueness_complete_partition}, the complete partition allows us to simplify the dimension calculation. Since the diagonalizable component does not introduce dimensional instability, the instability depends entirely on the completely non-diagonalizable component (CNDC). This relationship is formalized in the following theorem from \cite{huang2024stability}.

\begin{theorem}[\cite{huang2024stability}]\label{thm:cndc_dim}
Given a T-mesh $\mathscr{T}$, let $M\left(T(\mathscr{T})\right)$ be the global conformality matrix of the T-connected component $T(\mathscr{T})$, and let $\{T_1, T_2\}$ be the complete partition of $T(\mathscr{T})$ where the CNDC is denoted as $T_1 = \{l_1, l_2, \ldots, l_s\}$ ($s \le t$). Then, the dimension of the spline space $S_d(\mathscr{T})$ is:
\begin{equation}\label{dimformulanew}
\dim S_{d}(\mathscr{T})=(d+1)^2+(c+s-t)(d+1)+n_v-\mathrm{rank}(M_1),
\end{equation}
where $c$ is the number of all cross-cut edges, $n_v$ is the number of all interior vertices, and $M_1$ is the conformality matrix corresponding to the CNDC of $T(\mathscr{T})$.
\end{theorem}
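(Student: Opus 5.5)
Starting from Theorem~\ref{thm: dim formula}, it suffices to prove
\[
\mathrm{rank}\,M(T(\mathscr{T})) = \mathrm{rank}(M_1) + (t-s)(d+1).
\]
Substituting this into $(d+1)^2+c(d+1)+n_v-\mathrm{rank}\,M(T(\mathscr{T}))$ yields \eqref{dimformulanew} directly.

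\textbf{Step 1: structure of the conformality matrix.} Let $T_2=\{l_{s+1},\dots,l_t\}$, ordered by a reasonable order (one exists, since $\{T_1,T_2\}$ is a regular partition). Split the vertex set into $V_1$, the vertices lying on some edge of $T_1$, and $V_2$, the remaining vertices, which lie only on edges of $T_2$. Order the columns as $(V_1,V_2)$ and the rows as (equations of $T_1$, equations of $T_2$). The equations of an edge in $T_1$ involve only vertices of $V_1$, so
\[
M(T(\mathscr{T}))=\begin{pmatrix} M_1 & 0\\ B & C\end{pmatrix},
\]
where the rows of $(B\ C)$ are the $(d+1)(t-s)$ Vandermonde equations of the edges of $T_2$.

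\textbf{Step 2: the lower-right block has full row rank.} I would show that $C$ has full row rank $(d+1)(t-s)$. Process the edges $l_{s+1}\succ\cdots\succ l_t$ in the reasonable order. The remaining vertices on $l_{s+i}$ (those not shared with earlier edges of $T_2$) number at least $d+1$. I must also exclude vertices lying on $T_1$.

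This is the main obstacle. The reasonable order is internal to $T_2$, whereas $C$ only sees vertices not lying on $T_1$. I would address it with the completeness/maximality argument for the partition: reverse the order, so the last edge $l_t$ is handled first. An edge of $T_2$ that has fewer than $d+1$ vertices off $T_1$ and off the later $T_2$-edges could be moved into the non-diagonalizable part, contradicting the way the CNDC is produced. More concretely, I would use the reversed reasonable order, in which $l_t$ has at least $d+1$ remaining vertices after the removal. Alternatively I would invoke the construction in \cite{huang2024stability}: there the diagonalizable component is obtained by successively peeling edges that have at least $d+1$ vertices not on the remaining edges. That peeling order gives each $T_2$-edge at least $d+1$ private vertices relative to $T_1$ and to the edges peeled after it.

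With this order, choose $d+1$ private vertices on each edge. The corresponding square Vandermonde blocks (distinct abscissae) are nonsingular, and the submatrix of $C$ on these columns is block triangular. Hence $C$ has full row rank.

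\textbf{Step 3: conclude the rank identity.} Since $C$ has full row rank, its columns span $\mathbb{R}^{(d+1)(t-s)}$. Column operations within the $V_2$ columns can therefore eliminate $B$ only in combination with row space arguments, so I would argue directly via the nullspace. For any $\boldsymbol\delta_1\in\ker M_1$, the system $C\boldsymbol\delta_2=-B\boldsymbol\delta_1$ is solvable. Consequently
\[
\dim\ker M=\dim\ker M_1+\dim\ker C,
\]
which gives $\mathrm{rank}\,M=\mathrm{rank}\,M_1+\mathrm{rank}\,C$. Together with $\mathrm{rank}\,C=(d+1)(t-s)$, this is the claimed identity.
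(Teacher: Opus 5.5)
Your route is sound, and it is the standard argument behind this result; the paper itself only quotes the theorem from \cite{huang2024stability} and gives no proof. Split the columns as $(V_1,V_2)$ and the rows as ($T_1$-equations, $T_2$-equations). Then $M(T(\mathscr{T}))$ is block lower triangular with $M_1$ in the corner. Once $C$ has full row rank $(t-s)(d+1)$, your null-space count gives $\mathrm{rank}\,M(T(\mathscr{T}))=\mathrm{rank}(M_1)+(t-s)(d+1)$, which is all Theorem~\ref{thm: dim formula} needs. Your hesitation in Step~3 is unnecessary: if $CR=I$, subtract the $V_2$-columns multiplied by $RB$ from the $V_1$-columns. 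This kills $B$ and leaves $M_1$ untouched, since the block above $C$ is zero.

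The step that needs care is Step~2, and two of the three justifications you offer do not deliver it.
\begin{itemize}
\item Minimality in Definition~\ref{def:complete_partition} only forbids regular partitions with a \emph{smaller} non-diagonalizable part. Moving a $T_2$-edge into $T_1$ enlarges $T_1$, so it contradicts nothing.
\item A reasonable order of $T_2$ as a sub-component, or its reversal, is also insufficient, because it counts the vertices that $T_2$-edges share with $T_1$ as available. For example, the last edge $l_t$ has $d+1$ vertices off the other $T_2$-edges, but they may all lie on $T_1$ and so contribute no columns to $C$.
\end{itemize}

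What closes the step is the peeling description of the complete partition. The edges of $T_2$ can be listed $e_1,\dots,e_{t-s}$ in removal order so that each $e_i$ has at least $d+1$ vertices lying neither on an edge of $T_1$ nor on $e_{i+1},\dots,e_{t-s}$. This is how the CNDC is built in \cite{huang2024stability}. It is also exactly the fact this paper invokes in the proof of Lemma~\ref{lem:mourrain_cndc_order_reduction}. You should quote it as a property of that construction, not try to derive it from the standalone wording of Definitions~\ref{def:regular_partition}--\ref{def:complete_partition}.

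With that order, choose $d+1$ such vertices on each edge. These column sets are pairwise disjoint, and on them $C$ is square and block upper triangular with nonsingular Vandermonde diagonal blocks. So $C$ has full row rank and your proof is complete.
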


Next, we provide an example to illustrate the conformality matrix corresponding to the CNDC of $T(\mathscr{T})$.

\begin{example}[\cite{huang2024stability}]\label{ex:conformality_matrix}
Continuing from Example \ref{ex:complete_partition}, we explicitly illustrate the concrete structure of the conformality matrix $M_1$ corresponding to the CNDC of $T(\mathscr{T})$. Let $T_1$ denote the CNDC of $T(\mathscr{T})$ shown in Fig.~\ref{fig:T-mesh decomposition framework}(b), which consists of four mutually intersecting T $l$-edges: $v_1v_7, v_3v_4, v_2v_8$, and $v_5v_6$. 

To construct the conformality matrix systematically, the ordering of the T $l$-edges is prescribed as $v_1v_7 \succ v_3v_4 \succ v_2v_8 \succ v_5v_6$. Along each individual T $l$-edge, the local vertices are ordered sequentially as follows:
\begin{itemize}
    \item For $v_1v_7$: $v_1 \succ v_7 \succ v_{16} \succ v_{11} \succ v_{12}$
    \item For $v_3v_4$: $v_{11} \succ v_3 \succ v_{17} \succ v_4 \succ v_{13}$
    \item For $v_2v_8$: $v_{13} \succ v_8 \succ v_{18} \succ v_2 \succ v_{14}$
    \item For $v_5v_6$: $v_{12} \succ v_{14} \succ v_5 \succ v_{15} \succ v_6$
\end{itemize}
Since the vertex smoothing cofactors under the highest order of smoothness setting are in one-to-one correspondence with the vertices, each vertex $v_i$ uniquely determines a cofactor $\delta_i$. By following the prescribed edge priority and collecting the vertices sequentially without duplication, we obtain the global column ordering for these vertex smoothing cofactors. The resulting sequence of the 16 cofactor-associated columns is given by 

$$\delta_1, \delta_7, \delta_{16}, \delta_{11}, \delta_{12}, \delta_3, \delta_{17}, \delta_4, \delta_{13}, \delta_8, \delta_{18}, \delta_2, \delta_{14}, \delta_5, \delta_{15}, \delta_6.$$

Under the above global ordering, the conformality matrix $M_1$ is formulated as a $16 \times 16$ matrix. To explicitly indicate the correspondence between the columns and the vertex smoothing cofactors, the headers $\delta_i$ are positioned outside and directly above the matrix columns as follows:

\[
M_1 = \bordermatrix{
    & \delta_1 & \delta_7 & \delta_{16} & \delta_{11} & \delta_{12} & \delta_3 & \delta_{17} & \delta_4 & \delta_{13} & \delta_8 & \delta_{18} & \delta_2 & \delta_{14} & \delta_5 & \delta_{15} & \delta_6 \cr
    & 1 & 1 & 1 & 1 & 1 & 0 & 0 & 0 & 0 & 0 & 0 & 0 & 0 & 0 & 0 & 0 \cr
    & t_7 & t_3 & t_6 & t_5 & t_4 & 0 & 0 & 0 & 0 & 0 & 0 & 0 & 0 & 0 & 0 & 0 \cr
    & t_7^2 & t_3^2 & t_6^2 & t_5^2 & t_4^2 & 0 & 0 & 0 & 0 & 0 & 0 & 0 & 0 & 0 & 0 & 0 \cr
    & t_7^3 & t_3^3 & t_6^3 & t_5^3 & t_4^3 & 0 & 0 & 0 & 0 & 0 & 0 & 0 & 0 & 0 & 0 & 0 \cr
    & 0 & 0 & 0 & 1 & 0 & 1 & 1 & 1 & 1 & 0 & 0 & 0 & 0 & 0 & 0 & 0 \cr
    & 0 & 0 & 0 & s_4 & 0 & s_2 & s_3 & s_6 & s_5 & 0 & 0 & 0 & 0 & 0 & 0 & 0 \cr
    & 0 & 0 & 0 & s_4^2 & 0 & s_2^2 & s_3^2 & s_6^2 & s_5^2 & 0 & 0 & 0 & 0 & 0 & 0 & 0 \cr
    & 0 & 0 & 0 & s_4^3 & 0 & s_2^3 & s_3^3 & s_6^3 & s_5^3 & 0 & 0 & 0 & 0 & 0 & 0 & 0 \cr
    & 0 & 0 & 0 & 0 & 0 & 0 & 0 & 0 & 1 & 1 & 1 & 1 & 1 & 0 & 0 & 0 \cr
    & 0 & 0 & 0 & 0 & 0 & 0 & 0 & 0 & t_5 & t_2 & t_3 & t_6 & t_4 & 0 & 0 & 0 \cr
    & 0 & 0 & 0 & 0 & 0 & 0 & 0 & 0 & t_5^2 & t_2^2 & t_3^2 & t_6^2 & t_4^2 & 0 & 0 & 0 \cr
    & 0 & 0 & 0 & 0 & 0 & 0 & 0 & 0 & t_5^3 & t_2^3 & t_3^3 & t_6^3 & t_4^3 & 0 & 0 & 0 \cr
    & 0 & 0 & 0 & 0 & 1 & 0 & 0 & 0 & 0 & 0 & 0 & 0 & 1 & 1 & 1 & 1 \cr
    & 0 & 0 & 0 & 0 & s_4 & 0 & 0 & 0 & 0 & 0 & 0 & 0 & s_5 & s_3 & s_6 & s_7 \cr
    & 0 & 0 & 0 & 0 & s_4^2 & 0 & 0 & 0 & 0 & 0 & 0 & 0 & s_5^2 & s_3^2 & s_6^2 & s_7^2 \cr
    & 0 & 0 & 0 & 0 & s_4^3 & 0 & 0 & 0 & 0 & 0 & 0 & 0 & s_5^3 & s_3^3 & s_6^3 & s_7^3
}
\]

where $s_i$ and $t_j$ represent the $x$- and $y$-coordinates of the grid lines, respectively.

By applying elementary row operations to eliminate the rows and columns corresponding to the mono-vertices according to the reduction technique in \cite{huang2024stability}, the conformality matrix $M_1$ can be reduced to the conformality matrix $M_1^{\text{mult}}$ corresponding to the multi-vertices within the CNDC:
After eliminating the mono-vertex smoothing cofactors, the reduced conformality matrix $M_1^{\text{mult}}$ involves only the multi-vertex smoothing cofactors. Let $\delta_{11}, \delta_{12}, \delta_{13}, \delta_{14}$ denote the cofactors associated with these four multi-vertices. To explicitly indicate the correspondence between the columns and the multi-vertex smoothing cofactors, the headers are positioned outside and directly above the matrix columns as follows:

\[
M_1^{\text{mult}} = \bordermatrix{
    & \delta_{11} & \delta_{12} & \delta_{13} & \delta_{14} \cr
    & 1 & f_{3,6,7}(t_{4,5}) & 0 & 0 \cr
    & f_{2,3,6}(s_{4,5}) & 0 & 1 & 0 \cr
    & 0 & 0 & f_{2,3,6}(t_{5,4}) & 1 \cr
    & 0 & f_{3,6,7}(s_{4,5}) & 0 & 1
}
\]
where the functions $f_{i,j,k}(x_{p,q})$ are defined as:
\begin{equation*}
f_{i,j,k}(x_{p,q})=\frac{(x_p-x_i)(x_p-x_j)(x_p-x_k)}{(x_q-x_i)(x_q-x_j)(x_q-x_k)}, \quad x \in \{s, t\}.
\end{equation*}
Through further elementary transformations, $M_1^{\text{mult}}$ reduces to
$$
\begin{pmatrix}
1 & 0 & 0 & 0 \\ 0 & 1 & 0 & 0 \\
0 & 0 & 1 & 0 \\
0 & 0 & 0 & \det(M_1^{\text{mult}})
\end{pmatrix}
$$
with $\det({M_1^{\text{mult}}})=
f_{3,6,7}(s_{4,5})-f_{2,3,6}(t_{5,4})f_{2,3,6}(s_{4,5})f_{3,6,7}(t_{4,5})$.
Thus 
$$rank(M_1)=12+rank({M_1^{\text{mult}}})=\begin{cases} 15 & \det({M_1^{\text{mult}}})=0 \\
16 & \det({M_1^{\text{mult}}})\not=0
\end{cases}$$
For a more detailed discussion regarding this example, one may refer to Example 4.1 in \cite{huang2024stability}.
\end{example}

Furthermore, by generalizing the reduction technique demonstrated in Example \ref{ex:conformality_matrix}, the evaluation of the massive coupled system can be further simplified. We can explicitly determine the spline space dimension by relying strictly on the condensed multi-vertex conformality matrix, thereby isolating the core topological influence of the CNDC, as stated in the following theorem.

\begin{theorem}[\cite{huang2024stability}]\label{thm:cndc_dim_mult}
Under the same assumptions as in Theorem \ref{thm:cndc_dim}, let $n_{\textit{CNDC}}$ denote the number of vertices on the CNDC, and let $m$ be the number of multi-vertices within the CNDC. By applying elementary operations to eliminate the smoothing cofactors associated with the $n_{\textit{CNDC}} - m$ mono-vertices, the conformality matrix $M_1$ can be reduced to a matrix $M_1^{\text{mult}}\in\mathbb{R}^{(s(d+1)-n_{\textit{CNDC}}+m)\times m}$ that corresponds exclusively to the multi-vertices. Then, the dimension of the spline space $S_d(\mathscr{T})$ satisfies:
\begin{equation}
\dim S_{d}(\mathscr{T}) = (d+1)^2 + (c+s-t)(d+1) + n_v - n_{\textit{CNDC}} + m - \mathrm{rank}(M_1^{\text{mult}}).
\end{equation}
\end{theorem}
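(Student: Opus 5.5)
The plan is to start from Theorem~\ref{thm:cndc_dim} and reduce $\mathrm{rank}(M_1)$ to $\mathrm{rank}(M_1^{\text{mult}})$. It then suffices to show
\[
\mathrm{rank}(M_1)=(n_{\textit{CNDC}}-m)+\mathrm{rank}(M_1^{\text{mult}}).
\]
Substituting this into \eqref{dimformulanew} gives the stated formula immediately. Thus the whole argument is a block-elimination statement about the conformality matrix of the CNDC.

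First, order the columns of $M_1$ so that the $n_{\textit{CNDC}}-m$ mono-vertex cofactors come first and the $m$ multi-vertex cofactors come last. Each mono-vertex lies on exactly one T $l$-edge of the CNDC, so its column is nonzero only in the $d+1$ rows of $\mathscr{P}_{l}$ for that single edge $l$. Hence the mono-vertex columns split into edge-wise blocks. For the edge $l_i$, let $k_i$ be its number of mono-vertices. The corresponding block is a $(d+1)\times k_i$ Vandermonde matrix with distinct nodes.

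Second, I need $k_i\le d+1$ on every edge $l_i$ of the CNDC. This should follow from complete non-diagonalizability. If some $l_i$ had $k_i\ge d+1$, then $r(l_i)\ge d+1$ even after removing all its intersections with the other CNDC edges. So $l_i$ could be placed last in an ordering. One then has to argue that removing it yields a regular partition with a strictly smaller non-diagonalizable part, contradicting minimality (Definition~\ref{def:complete_partition}). I expect this to be the main obstacle. The exact minimality argument must be checked against the partition framework of \cite{huang2024stability}: moving $l_i$ to the diagonalizable side must preserve a reasonable order for $T_2\cup\{l_i\}$, and the case $k_i=d+1$ exactly must be included.

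Third, given $k_i\le d+1$, each Vandermonde block has full column rank $k_i$. So on each edge I can choose $k_i$ rows and, by row operations within that edge's $d+1$ rows, turn the block into $\begin{pmatrix} I_{k_i}\\ 0\end{pmatrix}$. These operations only mix rows of the same edge. Column operations then clear the multi-vertex entries in the pivot rows. The result is $M_1$ rank-equivalent to $\mathrm{diag}(I_{n_{\textit{CNDC}}-m},\,M_1^{\text{mult}})$. Here $M_1^{\text{mult}}$ collects the remaining $d+1-k_i$ rows per edge restricted to the multi-vertex columns. Its row count is $\sum_i (d+1-k_i)=s(d+1)-(n_{\textit{CNDC}}-m)$, matching the claimed size. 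The rank identity then follows, since elementary operations preserve rank, and the formula is obtained.
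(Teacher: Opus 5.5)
Your proposal is correct and follows essentially the paper's route: the paper (in Example~\ref{ex:conformality_matrix}, where $\mathrm{rank}(M_1)=12+\mathrm{rank}(M_1^{\text{mult}})$ with $12=n_{\textit{CNDC}}-m$) likewise eliminates the mono-vertex cofactors edge by edge to get $\mathrm{rank}(M_1)=n_{\textit{CNDC}}-m+\mathrm{rank}(M_1^{\text{mult}})$, and substitutes this into Theorem~\ref{thm:cndc_dim}. The step you flagged is exactly the bound $n(l)-m(l)\le d$, which the paper asserts with the same justification in the proof of Corollary~\ref{cor:directional_bound}. It closes via the description of the complete partition as the residue of repeatedly removing T $l$-edges with at least $d+1$ vertices not shared with the remaining edges (proof of Lemma~\ref{lem:mourrain_cndc_order_reduction}): such an $l_i$ could be removed next, and in the reasonable order of $T_2\cup\{l_i\}$ you should put $l_i$ first, because each $T_2$ edge's $d+1$ counted vertices already avoid every CNDC edge, including $l_i$.
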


Based on the rank estimation of $M_1^{\text{mult}}$ in Theorem~\ref{thm:cndc_dim_mult}, we can establish explicit upper and lower bounds for the dimension of the spline space.

\begin{corollary}\label{cor:dim_bounds}
Let $M_1^{\text{mult}} \in \mathbb{R}^{(s(d+1) - n_{\textit{CNDC}} + m) \times m}$ be the reduced multi-vertex conformality matrix defined in Theorem~\ref{thm:cndc_dim_mult}. The dimension of the spline space $S_d(\mathscr{T})$ is bounded from below by:
\begin{equation}\label{eq:dim_bounds_formula}
L(\mathscr{T})\le \dim S_{d}(\mathscr{T}),
\end{equation}
where the lower bound $L(\mathscr{T})$ is given by:
\begin{equation}
L(\mathscr{T}) = (d+1)^2 + (c+s-t)(d+1) + n_v - n_{\textit{CNDC}} + \big(n_{\textit{CNDC}} - s(d+1)\big)_+
\end{equation}
and $(\cdot)_+ = \max(\cdot, 0)$ denotes the truncation operator.
\end{corollary}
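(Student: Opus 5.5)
The plan is to start from the exact formula of Theorem~\ref{thm:cndc_dim_mult},
\[
\dim S_{d}(\mathscr{T}) = (d+1)^2 + (c+s-t)(d+1) + n_v - n_{\textit{CNDC}} + m - \mathrm{rank}(M_1^{\text{mult}}),
\]
and bound $\mathrm{rank}(M_1^{\text{mult}})$ from above by the smaller of its two matrix dimensions. The lower bound $L(\mathscr{T})$ then follows by substitution, so the argument is essentially a counting step.

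Since $M_1^{\text{mult}}$ has $s(d+1)-n_{\textit{CNDC}}+m$ rows and $m$ columns,
\[
\mathrm{rank}(M_1^{\text{mult}}) \le \min\{m,\; s(d+1)-n_{\textit{CNDC}}+m\}.
\]
This gives
\[
m - \mathrm{rank}(M_1^{\text{mult}}) \ge m - \min\{m,\, m + s(d+1) - n_{\textit{CNDC}}\} = \max\{0,\, n_{\textit{CNDC}} - s(d+1)\} = \big(n_{\textit{CNDC}}-s(d+1)\big)_+ .
\]
Substituting this inequality into the formula of Theorem~\ref{thm:cndc_dim_mult} yields exactly $\dim S_d(\mathscr{T}) \ge L(\mathscr{T})$.

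The only point needing care is that the row count $s(d+1)-n_{\textit{CNDC}}+m$ is nonnegative, so that the matrix is well defined and the $\min$ is meaningful. Theorem~\ref{thm:cndc_dim_mult} already provides this: the reduction eliminates the $n_{\textit{CNDC}}-m$ mono-vertex cofactors from the $s(d+1)$ rows of $M_1$. To see why this is consistent, note that each mono-vertex lies on exactly one T $l$-edge. On each such edge the corresponding Vandermonde rows are independent, and there are at most $d+1$ of them per edge. So the pivot elimination uses up at most the available rows.

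If one instead worries that a row count could be negative, this cannot happen. The row count is then nonnegative, and in any case $\mathrm{rank}(M_1^{\text{mult}})\le m$ alone already gives $m-\mathrm{rank}\ge 0$, which covers the truncation. I expect no real obstacle beyond this bookkeeping, and the step is routine.
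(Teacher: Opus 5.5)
Your proposal is correct and follows the paper's proof: you bound $\mathrm{rank}(M_1^{\text{mult}})$ by $\min\{m,\,s(d+1)-n_{\textit{CNDC}}+m\}$, use $m-\min\{m,\,m+X\}=(-X)_+$, and substitute into the formula of Theorem~\ref{thm:cndc_dim_mult}. Your aside on the nonnegativity of the row count is unnecessary, since Theorem~\ref{thm:cndc_dim_mult} already supplies the size of $M_1^{\text{mult}}$; if you want to justify it anyway, the fact you need is that each CNDC edge carries at most $d$ mono-vertices (by completeness of the partition, as in the proof of Corollary~\ref{cor:directional_bound}), not merely that each edge contributes $d+1$ independent Vandermonde rows.
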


\begin{proof}
Since $M_1^{\text{mult}}$ is a matrix of size $(s(d+1) - n_{\textit{CNDC}} + m) \times m$, its rank is fundamentally bounded from above by the minimum of its row and column dimensions:
\begin{equation}\label{eq:rank_upper}
\mathrm{rank}(M_1^{\text{mult}}) \le \min\big(s(d+1) - n_{\textit{CNDC}} + m, \, m\big).
\end{equation}
The minimum dimension of the spline space occurs when $\mathrm{rank}(M_1^{\text{mult}})$ reaches this upper bound. Substituting (\ref{eq:rank_upper}) into the dimension formula in Theorem~\ref{thm:cndc_dim_mult} yields the lower bound:
\begin{align*}
\dim S_{d}(\mathscr{T}) &\ge (d+1)^2 + (c+s-t)(d+1) + n_v - n_{\textit{CNDC}} + m - \min\big(s(d+1) - n_{\textit{CNDC}} + m, \, m\big) \\
&= (d+1)^2 + (c+s-t)(d+1) + n_v - n_{\textit{CNDC}} + \max\big(m - (s(d+1) - n_{\textit{CNDC}} + m), \, m - m\big) \\
&= (d+1)^2 + (c+s-t)(d+1) + n_v - n_{\textit{CNDC}} + \max\big(n_{\textit{CNDC}} - s(d+1), \, 0\big) \\
&= (d+1)^2 + (c+s-t)(d+1) + n_v - n_{\textit{CNDC}} + \big(n_{\textit{CNDC}} - s(d+1)\big)_+.
\end{align*}
This completes the proof.
\end{proof}
\section{The decoupling technique for conformality systems}
In this section, we introduce a decoupling technique specifically designed to resolve the tightly coupled conformality conditions of multi-vertices within the CNDC of $T(\mathscr{T})$. As established in Theorem~\ref{thm:cndc_dim}, characterizing the dimension of the spline space reduces to evaluating the conformality conditions over the CNDC. Moreover, the rank of this conformality matrix ultimately depends on the conformality conditions associated with the corresponding multi-vertex smoothing cofactors.

However, directly evaluating the rank of the matrix $M_1^{\text{mult}}$ by following the approach implied in Theorem~\ref{thm:cndc_dim} presents several significant theoretical and computational limitations in practice:
\begin{enumerate}
    \item The construction of the monolithic conformality matrix $M_1$ depends heavily on the specific global ordering of the T $l$-edges and vertices, making it highly intractable to formulate directly for a CNDC with a complex topological structure.
    \item Obtaining the reduced matrix $M_1^{\text{mult}}$ requires performing intensive elementary row operations to eliminate the rows and columns associated with the mono-vertices, which leads to an expensive computational cost as the matrix size scales.
    \item The direct computation can only be conducted on a case-by-case basis for specific mesh configurations. It remains extremely difficult to provide a general form or establish a uniform solving approach using this direct method.
\end{enumerate}

To circumvent these bottlenecks and address the lack of a generalized approach, we introduce the decoupling technique in this section. Structurally, the complexity stems from the severe algebraic coupling at the multi-vertices. Specifically, as illustrated by the conformality matrix $M_1$ in Example~\ref{ex:conformality_matrix}, the multi-vertex cofactors $\delta_{11}, \delta_{12}, \delta_{13}$, and $\delta_{14}$ associated with the multi-vertices $v_{11}, v_{12}, v_{13}$, and $v_{14}$ correspond precisely to the 4th, 5th, 9th, and 13th columns of $M_1$, respectively. These columns inherently exhibit a strong coupling nature, characterized by the presence of multiple distinct sets of non-zero entries that span across different row blocks (i.e., representing separate horizontal and vertical edge subsystems). This structural overlap intertwines the otherwise independent localized equations, thereby destroying the block-diagonal property of the global system and preventing parallel evaluation. Geometrically, a multi-vertex cofactor must simultaneously satisfy the smooth continuity conditions imposed by both a horizontal T $l$-edge and a vertical T $l$-edge. To break this mutual dependency, the proposed technique splits the dual geometric roles embedded within a single multi-vertex cofactor by introducing auxiliary variables, thereby allowing the horizontal and vertical smoothing constraints to be analyzed independently in a localized manner.

Before formally presenting the main theorem and the general algorithm, we first introduce the formal definition of \textbf{decoupled vertex cofactors}. Based on this concept, the core mechanics of the decoupling method are illustrated using the same mesh configuration as in Example~\ref{ex:conformality_matrix}. Subsequently, we establish the general computational framework and present the main results regarding the spline space dimension.

According to Proposition~\ref{prop:uniqueness_complete_partition}, the CNDC of a given T-mesh $\mathscr{T}$ is uniquely determined. Consequently, the number of the constituent T $l$-edges $s$, the number of the vertices $n_{\textit{CNDC}}$, and the number of the multi-vertices $m$ are all uniquely determined. This ensures that the following concept is well-defined.

\begin{definition}[Decoupled Vertex Cofactors]
Given a T-mesh $\mathscr{T}$ and the corresponding spline space $S_d(\mathscr{T})$, let $T_1 = \{l_1, l_2, \ldots, l_s\}$ be the CNDC of $\mathscr{T}$ containing $m$ interior multi-vertices $\{v_1, v_2, \ldots, v_m\}$. For each multi-vertex $v_i$ ($i=1,2,\ldots,m$), let $v_i = l_{j_i} \cap l_{k_i}$, where $l_{j_i} \in T_1$ is a horizontal T $l$-edge and $l_{k_i} \in T_1$ is a vertical T $l$-edge. The \textbf{decoupled vertex cofactors} for $v_i$ are defined as a pair of  variables $(\delta_i^{h}, \delta_i^{v}) \in \mathbb{R}^2$, where:
\begin{itemize}
    \item $\delta_i^{h}$ is the horizontal decoupled vertex cofactor associated with the horizontal T $l$-edge $l_{j_i}$, satisfying the local conformality condition of the form \eqref{gcc} along $l_{j_i}$;
    \item $\delta_i^{v}$ is the vertical decoupled vertex cofactor associated with the vertical T $l$-edge $l_{k_i}$, satisfying the local conformality condition of the form \eqref{gcc} along $l_{k_i}$.
\end{itemize}
\end{definition}

Before presenting a concrete example to illustrate how decoupled vertex cofactors are used in actual computations, we first introduce the following lemma.

\begin{lemma}[\cite{huang2024stability}]
\label{lemma:vander_rref}
Let $I = \{i_1, i_2, \ldots, i_k\} \subset \mathbb{N}$ be an index set of $k$ distinct elements. Let $V_k^d$ be the $(d+1) \times k$ Vandermonde matrix generated by distinct nodes $\{s_j\}_{j \in I} \subset \mathbb{R}$, where the columns are ordered according to the sequence of $I$. Suppose $k > d+1$, and let $I_0 = \{i_1, i_2, \ldots, i_{d+1}\}$ denote the subset of the first $d+1$ indices. Then, the reduced row echelon form of $V_k^d$ is simplified as $\begin{pmatrix} I_{d+1} & S \end{pmatrix}$, where $S$ is a $(d+1) \times (k-d-1)$ matrix whose columns are indexed by $j \in I \setminus I_0$, given by
\[
    S = \begin{pmatrix}
        g_{i_1}(s_{i_{d+2}}) & \cdots & g_{i_1}(s_{i_k}) \\
        \vdots & \ddots & \vdots \\
        g_{i_{d+1}}(s_{i_{d+2}}) & \cdots & g_{i_{d+1}}(s_{i_k})
    \end{pmatrix},
\]
and the functions $g_i$ are defined by the Lagrange basis polynomials associated with the reference set $I_0$:
\[
    g_p(x) = \prod_{j \in I_0, j \neq p} \frac{x-s_j}{s_p-s_j}, \quad p \in I_0.
\]
\end{lemma}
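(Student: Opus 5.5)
The statement to prove is Lemma~\ref{lemma:vander_rref}: the reduced row echelon form of a $(d+1)\times k$ Vandermonde matrix $V_k^d$ with distinct nodes is $\begin{pmatrix} I_{d+1} & S\end{pmatrix}$, where $S$ has entries $g_p(s_j)$ given by the Lagrange basis of the first $d+1$ nodes.

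Write $V_k^d=\begin{pmatrix} V_0 & W\end{pmatrix}$. Here $V_0$ is the square $(d+1)\times(d+1)$ Vandermonde block on the nodes indexed by $I_0$. The block $W$ collects the remaining $k-d-1$ columns, with the column for node $j$ equal to $(1,s_j,\dots,s_j^d)^T$.

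\textbf{Step 1: invertibility of $V_0$.} The nodes are distinct, so $\det V_0=\prod_{a<b}(s_{i_b}-s_{i_a})\neq 0$. Hence $V_0$ is invertible and left multiplication by $V_0^{-1}$ is a sequence of elementary row operations. This gives
\[
V_0^{-1}V_k^d=\begin{pmatrix} I_{d+1} & V_0^{-1}W\end{pmatrix}.
\]
The matrix is already in reduced row echelon form: the pivots lie in the first $d+1$ columns and every pivot column is a standard basis vector. RREF is unique, so it remains only to identify $S=V_0^{-1}W$.

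\textbf{Step 2: solve column by column.} Fix $j\in I\setminus I_0$ and let $c=(c_p)_{p\in I_0}$ be the corresponding column of $S$. It is the unique solution of $V_0c=(1,s_j,\dots,s_j^d)^T$, i.e.
\[
\sum_{p\in I_0}c_p\,s_p^{\,r}=s_j^{\,r},\qquad r=0,\dots,d.
\]
Take any polynomial $q$ of degree at most $d$ and combine these identities with its coefficients. This gives $\sum_p c_p\,q(s_p)=q(s_j)$, so $c$ represents the evaluation functional at $s_j$ in terms of the evaluations at the nodes of $I_0$.

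\textbf{Step 3: identify $c_p=g_p(s_j)$.} Lagrange interpolation on the $d+1$ distinct nodes of $I_0$ gives, for every $q\in\mathbb{P}_d$,
\[
q(s_j)=\sum_{p\in I_0}q(s_p)\,g_p(s_j).
\]
Therefore $c'=(g_p(s_j))_p$ also satisfies all the identities of Step 2. In particular this holds for the monomials $q=x^r$, so $V_0c'=(1,s_j,\dots,s_j^d)^T$. Since $V_0$ is invertible, $c=c'$.

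Collecting the columns gives $S_{p,j}=g_p(s_j)$, which is exactly the stated matrix. The only place needing care is Step 3, where the Lagrange interpolation identity (exactness on $\mathbb{P}_d$) must be recognised as the transpose of the Vandermonde system. Everything else is routine, and I expect no real obstacle.
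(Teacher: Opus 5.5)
Your proof is correct and complete. The paper gives no proof of this lemma and simply imports it from \cite{huang2024stability}, so there is no in-paper argument to compare against. Your route is the standard one:
\begin{itemize}
\item left-multiply by $V_0^{-1}$;
\item observe that $\begin{pmatrix} I_{d+1} & V_0^{-1}W\end{pmatrix}$ is already in reduced row echelon form, so by uniqueness it is \emph{the} RREF;
\item identify each column of $V_0^{-1}W$ by applying exactness of Lagrange interpolation on $\mathbb{P}_d$ to the monomials.
\end{itemize}

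An equivalent route is Cramer's rule. Replacing the column of $s_p$ in $V_0$ by that of $s_j$ gives another Vandermonde determinant, and the ratio of the two collapses to $\prod_{q\in I_0,\,q\neq p}(s_j-s_q)/(s_p-s_q)=g_p(s_j)$. Your interpolation argument avoids that determinant bookkeeping.

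Your Step~2 reformulation is not actually needed, since Step~3 verifies the monomial identities $V_0c'=(1,s_j,\dots,s_j^d)^T$ directly.
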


\bigskip

Below, we revisit the mesh configuration in Example~\ref{ex:conformality_matrix} to demonstrate how to evaluate the rank of the conformality matrix associated with CNDC using the decoupled vertex cofactors and Lemma~\ref{lemma:vander_rref}.

\begin{example}\label{ex:decoupled_conformality_matrix}
Continuing from Example~\ref{ex:conformality_matrix}, we expand the column space from $16$ to $20$ by replacing the monolithic multi-vertex cofactors $\delta_{11}, \delta_{12}, \delta_{13},$ and $\delta_{14}$ with their decoupled horizontal and vertical counterparts. To optimize the local basis structure of block $4$, we arrange the expanded vertex cofactor vector as:
\[
\boldsymbol{\delta}_1 = (\delta_1, \delta_7, \delta_{16}, \delta_{11}^h, \delta_{12}^h, \delta_{11}^v, \delta_3, \delta_{17}, \delta_4, \delta_{13}^v, \delta_{13}^h, \delta_8, \delta_{18}, \delta_2, \delta_{14}^h, \delta_{14}^v, \delta_5, \delta_{15}, \delta_6, \delta_{12}^v)^T \in \mathbb{R}^{20}.
\]
Under this formulation, the modified conformality system for this CNDC can be expressed compactly as the homogeneous linear system $K_1 \boldsymbol{\delta}_1 = 0$. The $16 \times 20$ block-diagonal matrix $K_1$ natively preserves the rows according to the edge priority $v_1v_7 \succ v_3v_4 \succ v_2v_8 \succ v_5v_6$, which is explicitly given as follows:
\[
\bordermatrix{
    & \delta_1 & \delta_7 & \delta_{16} & \delta_{11}^h & \delta_{12}^h & \delta_{11}^v & \delta_3 & \delta_{17} & \delta_4 & \delta_{13}^v & \delta_{13}^h & \delta_8 & \delta_{18} & \delta_2 & \delta_{14}^h & \delta_{14}^v & \delta_5 & \delta_{15} & \delta_6 & \delta_{12}^v \cr
    & 1 & 1 & 1 & 1 & 1 & 0 & 0 & 0 & 0 & 0 & 0 & 0 & 0 & 0 & 0 & 0 & 0 & 0 & 0 & 0 \cr
    & t_7 & t_3 & t_6 & t_5 & t_4 & 0 & 0 & 0 & 0 & 0 & 0 & 0 & 0 & 0 & 0 & 0 & 0 & 0 & 0 & 0 \cr
    & t_7^2 & t_3^2 & t_6^2 & t_5^2 & t_4^2 & 0 & 0 & 0 & 0 & 0 & 0 & 0 & 0 & 0 & 0 & 0 & 0 & 0 & 0 & 0 \cr
    & t_7^3 & t_3^3 & t_6^3 & t_5^3 & t_4^3 & 0 & 0 & 0 & 0 & 0 & 0 & 0 & 0 & 0 & 0 & 0 & 0 & 0 & 0 & 0 \cr
    & 0 & 0 & 0 & 0 & 0 & 1 & 1 & 1 & 1 & 1 & 0 & 0 & 0 & 0 & 0 & 0 & 0 & 0 & 0 & 0 \cr
    & 0 & 0 & 0 & 0 & 0 & s_4 & s_2 & s_3 & s_6 & s_5 & 0 & 0 & 0 & 0 & 0 & 0 & 0 & 0 & 0 & 0 \cr
    & 0 & 0 & 0 & 0 & 0 & s_4^2 & s_2^2 & s_3^2 & s_6^2 & s_5^2 & 0 & 0 & 0 & 0 & 0 & 0 & 0 & 0 & 0 & 0 \cr
    & 0 & 0 & 0 & 0 & 0 & s_4^3 & s_2^3 & s_3^3 & s_6^3 & s_5^3 & 0 & 0 & 0 & 0 & 0 & 0 & 0 & 0 & 0 & 0 \cr
    & 0 & 0 & 0 & 0 & 0 & 0 & 0 & 0 & 0 & 0 & 1 & 1 & 1 & 1 & 1 & 0 & 0 & 0 & 0 & 0 \cr
    & 0 & 0 & 0 & 0 & 0 & 0 & 0 & 0 & 0 & 0 & t_5 & t_2 & t_3 & t_6 & t_4 & 0 & 0 & 0 & 0 & 0 \cr
    & 0 & 0 & 0 & 0 & 0 & 0 & 0 & 0 & 0 & 0 & t_5^2 & t_2^2 & t_3^2 & t_6^2 & t_4^2 & 0 & 0 & 0 & 0 & 0 \cr
    & 0 & 0 & 0 & 0 & 0 & 0 & 0 & 0 & 0 & 0 & t_5^3 & t_2^3 & t_3^3 & t_6^3 & t_4^3 & 0 & 0 & 0 & 0 & 0 \cr
    & 0 & 0 & 0 & 0 & 0 & 0 & 0 & 0 & 0 & 0 & 0 & 0 & 0 & 0 & 0 & 1 & 1 & 1 & 1 & 1 \cr
    & 0 & 0 & 0 & 0 & 0 & 0 & 0 & 0 & 0 & 0 & 0 & 0 & 0 & 0 & 0 & s_5 & s_3 & s_6 & s_7 & s_4 \cr
    & 0 & 0 & 0 & 0 & 0 & 0 & 0 & 0 & 0 & 0 & 0 & 0 & 0 & 0 & 0 & s_5^2 & s_3^2 & s_6^2 & s_7^2 & s_4^2 \cr
    & 0 & 0 & 0 & 0 & 0 & 0 & 0 & 0 & 0 & 0 & 0 & 0 & 0 & 0 & 0 & s_5^3 & s_3^3 & s_6^3 & s_7^3 & s_4^3
}
\]
Since the vertex cofactors $\delta_{11}, \delta_{12}, \delta_{13},$ and $\delta_{14}$ must simultaneously satisfy the conformality constraints from both their corresponding horizontal and vertical T $l$-edges, these decoupled counterparts are forced to satisfy the equality conditions $\delta_i^h = \delta_i^v$ for $i \in \{11, 12, 13, 14\}$. By omitting the columns that consist entirely of zeros and grouping the horizontal and vertical counterparts together, this coupling subsystem can be expressed compactly by retaining only the nonzero columns:
\[
K_2 = \bordermatrix{
    & \delta_{11}^h & \delta_{12}^h & \delta_{13}^h & \delta_{14}^h & \delta_{11}^v & \delta_{12}^v & \delta_{13}^v & \delta_{14}^v \cr
    v_{11} & 1 & 0 & 0 & 0 & -1 & 0 & 0 & 0 \cr
    v_{12} & 0 & 1 & 0 & 0 & 0 & -1 & 0 & 0 \cr
    v_{13} & 0 & 0 & 1 & 0 & 0 & 0 & -1 & 0 \cr
    v_{14} & 0 & 0 & 0 & 1 & 0 & 0 & 0 & -1
}.
\]
For notational simplicity, the zero columns corresponding to mono-vertex cofactors are omitted in the displayed matrix $K_2$. Whenever $K_1$ and $K_2$ are considered simultaneously, $K_2$ is understood to be zero-padded to the same $20$ columns as $K_1$.

As stated in Theorem~\ref{thm:cndc_dim}, solving the dimension of spline space over the CNDC is algebraically equivalent to finding the intersection of the kernels of $K_1$ and $K_2$, i.e., $\ker(K_1) \cap \ker(K_2)$.

\begin{itemize}
    \item \textbf{Traditional Approach:} One can first solve the system $K_2 \boldsymbol{\delta}_{\text{dec}} = 0$, where $\boldsymbol{\delta}_{\text{dec}}$ is the vector composed of the decoupled vertex cofactors. This system explicitly enforces the equality conditions $\delta_i^h = \delta_i^v$ for $i \in \{11, 12, 13, 14\}$. Substituting these relations into $K_1 \boldsymbol{\delta}_1 = 0$ eliminates the 4 redundant variables, reducing the 20-dimensional variable space back to 16. This substitution recovers the original system $M \boldsymbol{\delta} = 0$ defined in Example~\ref{ex:conformality_matrix}. Thus, computing the intersection $\ker(K_1) \cap \ker(K_2)$ is equivalent to evaluating the rank of $M$, which is consistent with the formulation in Theorem~\ref{thm:cndc_dim}.
    
    \item \textbf{Decoupling Framework:} Alternatively, one can exploit the block-diagonal structure of $K_1$ by computing its null space first. Since $K_1$ consists of four independent $4 \times 5$ Vandermonde-like submatrices, its total null space has a dimension of $20 - 16 = 4$. This localized structure allows the basis vectors to be solved analytically and independently on each individual edge block. Substituting these independent local basis expressions back into the coupling equation $K_2 \boldsymbol{\delta}_{\text{dec}} = 0$ then yields a highly condensed $4 \times 4$ linear system. This framework avoids dealing with the matrix $M$ directly, providing a parallelizable procedure to determine the spline dimension in Theorem~\ref{thm:cndc_dim}.
\end{itemize}

A massive advantage of this decoupled framework is that its localization significantly facilitates the theoretical analysis of the spline dimension over the CNDC. Furthermore, from a computational perspective, except for the initial layout of $K_1$, the entire solution process can be completely parallelized and evaluated block-by-block. Specifically, by setting $d=3$ and $k=5$ for each local block, Lemma~\ref{lemma:vander_rref} allows us to instantly obtain the reduced row echelon form for each of the four submatrices independently using localized, non-sequential index sets.

Let $\boldsymbol{\alpha} = (\alpha_1, \alpha_3, \alpha_2, \alpha_4)^T \in \mathbb{R}^4$ be the block-wise free parameters. We define the specific index sets $I^{(m)}$ and their respective basis reference subsets $I_0^{(m)}$ for each individual edge block $m \ (m=1,2,3,4)$ as follows:
\begin{itemize}
    \item \textbf{Block 1} ($v_1v_7$): $I^{(1)} = \{7, 3, 6, 5, 4\}$ with $I_0^{(1)} = \{7, 3, 6, 5\}$. The remaining node index is $4$. Lemma~\ref{lemma:vander_rref} yields:
    $$\delta_{11}^h = -g_5^{(1)}(t_4)\alpha_1, \quad \delta_{12}^h = \alpha_1.$$
    \item \textbf{Block 2} ($v_3v_4$): $I^{(2)} = \{4, 2, 3, 6, 5\}$ with $I_0^{(2)} = \{4, 2, 3, 6\}$. The remaining node index is $5$. Lemma~\ref{lemma:vander_rref} yields:
    $$\delta_{11}^v = -g_4^{(2)}(s_5)\alpha_2, \quad \delta_{13}^v = \alpha_2.$$
    \item \textbf{Block 3} ($v_2v_8$): $I^{(3)} = \{5, 2, 3, 6, 4\}$ with $I_0^{(3)} = \{5, 2, 3, 6\}$. The remaining node index is $4$. Lemma~\ref{lemma:vander_rref} yields:
    $$\delta_{13}^h = -g_5^{(3)}(t_4)\alpha_3, \quad \delta_{14}^h = \alpha_3.$$
    \item \textbf{Block 4} ($v_5v_6$): $I^{(4)} = \{5, 3, 6, 7, 4\}$ with $I_0^{(4)} = \{5, 3, 6, 7\}$. The remaining node index is $4$. Lemma~\ref{lemma:vander_rref} yields:
    $$\delta_{14}^v = -g_5^{(4)}(s_4)\alpha_4, \quad \delta_{12}^v = \alpha_4.$$
\end{itemize}
Here, the local Lagrange basis functions $g_p^{(m)}(x)$ ($p \in I_0^{(m)}$) are computed strictly via Lemma~\ref{lemma:vander_rref} using the geometric coordinates of the node subset $I_0^{(m)}$ for each respective block $m$.

These local basis solutions allow us to construct a multi-vertex basis matrix $Z \in \mathbb{R}^{8 \times 4}$ that isolates the evaluations of the null space basis vectors precisely at the multi-vertices. This matrix directly maps the block-wise free parameters $\boldsymbol{\alpha}$—which act as the coefficients of the basis of the null space for the decoupled homogeneous system $K_1 \boldsymbol{\delta}_1 = 0$—to the structured decoupled vertex cofactors $\boldsymbol{\delta}_{\text{dec}} = (\delta_{11}^h, \delta_{12}^h, \delta_{13}^h, \delta_{14}^h, \delta_{11}^v, \delta_{12}^v, \delta_{13}^v, \delta_{14}^v)^T$. By grouping the horizontal basis blocks above the vertical basis blocks, the matrix $Z$ is neatly structured as:
$$Z = \bordermatrix{
                & \alpha_1        & \alpha_3        & \alpha_2        & \alpha_4        \cr
\delta_{11}^h   & -g_5^{(1)}(t_4) & 0                & 0                & 0                \cr
\delta_{12}^h   & 1                & 0                & 0                & 0                \cr
\delta_{13}^h   & 0                & -g_5^{(3)}(t_4) & 0                & 0                \cr
\delta_{14}^h   & 0                & 1                & 0                & 0                \cr
\hline
\delta_{11}^v   & 0                & 0                & -g_4^{(2)}(s_5) & 0                \cr
\delta_{12}^v   & 0                & 0                & 0                & 1 \cr
\delta_{13}^v   & 0                & 0                & 1                & 0                \cr
\delta_{14}^v   & 0                & 0                & 0                & -g_5^{(4)}(s_4)
}.$$

Consequently, every vector in $\ker(K_1)$ has a unique coefficient vector $\boldsymbol{\alpha}$ with respect to the chosen block-wise basis, and its decoupled multi-vertex cofactors are given by $\boldsymbol{\delta}_{\text{dec}} = Z \boldsymbol{\alpha}$. To enforce the original constraints, these decoupled vertex cofactors must simultaneously satisfy the consistency requirements $K_2 \boldsymbol{\delta}_{\text{dec}} = \mathbf{0}$. Substituting this expression into the consistency conditions yields $(K_2 Z) \boldsymbol{\alpha} = \mathbf{0}$, which condenses the global consistency constraints into a highly reduced $4 \times 4$ homogeneous linear system governed by the condensed coupling matrix $K = K_2 Z$. This system evaluates explicitly to:
$$K \boldsymbol{\alpha} = 
\begin{pmatrix}
-g_5^{(1)}(t_4) & 0 & g_4^{(2)}(s_5) & 0 \\
1 & 0 & 0 & -1 \\
0 & -g_5^{(3)}(t_4) & -1 & 0 \\
0 & 1 & 0 & g_5^{(4)}(s_4)
\end{pmatrix}
\begin{pmatrix}
\alpha_1 \\ \alpha_3 \\ \alpha_2 \\ \alpha_4
\end{pmatrix} = 0.$$

Here, the entries are local Lagrange basis functions evaluated at the remaining nodes. For instance, the entry $g_5^{(1)}(t_4)$ is defined over the reference node set $I_0^{(1)} = \{7,3,6,5\}$ of the first block and is explicitly given by:
\[
g_5^{(1)}(t_4) = \frac{(t_4 - t_7)(t_4 - t_3)(t_4 - t_6)}{(t_5 - t_7)(t_5 - t_3)(t_5 - t_6)},
\]
with the other entries computed analogously over their respective geometric coordinate sets. 

The determinant of the updated coefficient matrix $K$ can be analytically evaluated as 
$$\det(K) = g_5^{(1)}(t_4) - g_4^{(2)}(s_5) \cdot g_5^{(3)}(t_4) \cdot g_5^{(4)}(s_4).$$ 
Notice that deleting the first row and the second column of $K$ yields an upper-triangular $3 \times 3$ submatrix (consisting of the entries $1$, $-1$, and $g_5^{(4)}(s_4)$ on its diagonal), implying $\text{rank}(K) \ge 3$ for every admissible choice of grid coordinates. Algebraically, solving the spline dimension over this CNDC via the kernel intersection $\ker(K_1) \cap \ker(K_2)$ is equivalent to finding $\ker(K)$, which directly governs the dimension as $$\dim(\ker(K_1) \cap \ker(K_2)) = 4 - \text{rank}(K).$$ 

Consequently, since the original conformality matrix $M_1$ defined in Example~\ref{ex:conformality_matrix} operates on a $16$-dimensional variable space, its rank is linked to the condensed system by  
$$\text{rank}(M_1) = 16 - \dim \ker(K) = 12 + \text{rank}(K).$$ 
This allows us to express the rank of the $16 \times 16$ conformality matrix $M_1$ explicitly as the following piecewise function:
\[
\text{rank}(M_1) = \begin{cases}
    15, & \text{if } g_5^{(1)}(t_4) - g_4^{(2)}(s_5) \cdot g_5^{(3)}(t_4) \cdot g_5^{(4)}(s_4)=0, \\
    16, & \text{if } g_5^{(1)}(t_4) - g_4^{(2)}(s_5) \cdot g_5^{(3)}(t_4) \cdot g_5^{(4)}(s_4) \neq 0.
\end{cases}
\]
Remarkably, this result demonstrates that the evaluation of the massive coupled system $M_1$ can be entirely avoided. It is easy to verify that the dimension instability condition 
$$g_5^{(1)}(t_4) - g_4^{(2)}(s_5) \cdot g_5^{(3)}(t_4) \cdot g_5^{(4)}(s_4) = 0$$ 
is algebraically equivalent to the condition derived in Example~\ref{ex:conformality_matrix}
$$f_{3,6,7}(s_{4,5})-f_{2,3,6}(t_{5,4})f_{2,3,6}(s_{4,5})f_{3,6,7}(t_{4,5})=0.$$ 
The non-trivial solutions of the condensed $4 \times 4$ system completely determine the parameters $\boldsymbol{\alpha}$, which can then be back-substituted to provide the exact smooth cofactors for this CNDC configuration.
\end{example}

The dramatic simplification observed in Example~\ref{ex:decoupled_conformality_matrix} reveals a fundamental algebraic structure governing the CNDC. By artificially decoupling the dependencies at the intersecting multi-vertices, we can transform a monolithic, globally coupled conformality matrix into a highly localized system. To formalize this procedure, we first introduce the following matrices.

\begin{definition}\label{def:decoupled_matrices}
Given a CNDC consisting of $s$ T $l$-edges, $n_{\textit{CNDC}}$ vertices and $m$ multi-vertices, we expand the conformality system by introducing independent horizontal and vertical cofactor variables for each multi-vertex, which appends $m$ additional columns to the constraint matrix.
\begin{itemize}
    \item The \textbf{Decoupled Conformality Matrix}, denoted as $K_1$, is an $s(d+1) \times (n_{\textit{CNDC}} + m)$ block-diagonal matrix encoding the smoothness constraints of the $s$ T $l$-edges completely independently. Each of its $s$ blocks operates strictly on the localized variables associated with its corresponding T $l$-edge.
    \item The \textbf{Coupling Matrix}, denoted as $K_2$, is an $m \times 2m$ sparse matrix encoding the consistency conditions. It enforces exactly $m$ equalities, ensuring that the decoupled vertex cofactors at each multi-vertex satisfy the cross-edge equality conditions (i.e., $\delta_i^h = \delta_i^v$ for $i = 1, \dots, m$). When $K_2$ is used together with $K_1$, the omitted zero columns corresponding to mono-vertex cofactors are understood to be restored by zero-padding.
    \item The \textbf{Condensed Coupling Matrix}, denoted as $K \in \mathbb{R}^{m \times (n_{\textit{CNDC}} + m - s(d+1))}$, is defined by: 
    \[ K = K_2 Z, \]
    where $Z \in \mathbb{R}^{2m \times (n_{\textit{CNDC}} + m - s(d+1))}$ is obtained by extracting the $2m$ rows corresponding to the decoupled multi-vertex cofactors from the full basis matrix of $\ker(K_1)$. Different choices of the basis of $\ker(K_1)$ only right-multiply $Z$ by a nonsingular matrix. Hence the matrix $K$ itself may depend on the chosen local bases, but $\mathrm{rank}(K)$ is independent of this choice.
\end{itemize}
\end{definition}

This decoupling mechanism allows us to bypass the conformality matrix $M_1$ entirely. Because the decoupled matrix $K_1$ consists of independent blocks of Vandermonde-like matrices, its row rank is explicitly known. This localized property enables us to formulate a precise and reduced expression for the rank of the conformality matrix corresponding to the CNDC.

\begin{theorem}\label{thm:cndc_rank}
Let $M_1 \in \mathbb{R}^{s(d+1) \times n_{\textit{CNDC}}}$ be the conformality matrix corresponding to the CNDC with $s$ T $l$-edges and $m$ multi-vertices. Then the rank of $M_1$ can be explicitly expressed as:
\begin{equation}
\mathrm{rank}(M_1) = s(d+1) - m + \mathrm{rank}(K),
\end{equation}
where $K$ is the condensed coupling matrix defined in Definition~\ref{def:decoupled_matrices}.
\end{theorem}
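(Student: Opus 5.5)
The plan is to compute $\dim\ker(M_1)$ in two ways and then apply rank--nullity. Let $N:=n_{\textit{CNDC}}$. The decoupled variable space is $\mathbb{R}^{N+m}$. It contains one coordinate for each mono-vertex and two coordinates $(\delta_i^h,\delta_i^v)$ for each multi-vertex.

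First I would set up an explicit linear isomorphism between $\ker(M_1)$ and $\ker(K_1)\cap\ker(K_2)$. The map $\iota:\mathbb{R}^{N}\to\mathbb{R}^{N+m}$ copies every mono-vertex cofactor and sends each multi-vertex cofactor $\delta_i$ to the pair $(\delta_i,\delta_i)$. It is injective, and its image is exactly $\ker(K_2)$, since $K_2$ only imposes $\delta_i^h=\delta_i^v$. For each T $l$-edge $l_j$, the block of $K_1$ for $l_j$ applied to $\iota(\boldsymbol\delta)$ equals $\mathscr P_{l_j}\boldsymbol\delta$. This holds because a multi-vertex on a horizontal edge enters that block through $\delta_i^h$, and one on a vertical edge enters through $\delta_i^v$. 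Hence $K_1\iota=M_1$, and therefore $\iota(\ker M_1)=\ker(K_1)\cap\ker(K_2)$. This is just the ``traditional approach'' argument of Example~\ref{ex:decoupled_conformality_matrix}, written in general form.

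Second, I would compute $\dim\ker(K_1)$. The matrix $K_1$ is block diagonal with one block per T $l$-edge, and distinct blocks use disjoint sets of columns: every column belongs to exactly one edge, because each multi-vertex copy is assigned to a single edge. Each block is a $(d+1)\times n(l_j)$ Vandermonde matrix with distinct nodes and $n(l_j)\ge d+2$, by the non-vanishability assumption. So each block has full row rank $d+1$, which gives $\mathrm{rank}(K_1)=s(d+1)$. Consequently $\dim\ker K_1=N+m-s(d+1)=:q$. Here I need to check that the column count is right, namely $\sum_j n(l_j)=N+m$, since each multi-vertex lies on exactly two edges. Let $B\in\mathbb{R}^{(N+m)\times q}$ be a basis matrix of $\ker K_1$. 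Then $\ker(K_1)\cap\ker(K_2)=\{B\boldsymbol\alpha: K_2B\boldsymbol\alpha=0\}$. Because $K_2$, after zero-padding, reads only the $2m$ multi-vertex rows, we have $K_2B=K_2Z=K$. Since $B$ has full column rank, this intersection is isomorphic to $\ker K$, and so its dimension is $q-\mathrm{rank}(K)$.

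Combining the two steps gives $\mathrm{rank}(M_1)=N-\dim\ker M_1=N-(N+m-s(d+1))+\mathrm{rank}(K)=s(d+1)-m+\mathrm{rank}(K)$. I expect the main obstacle to be the bookkeeping behind the identity $K_1\iota=M_1$. Specifically, I must make sure that every multi-vertex of the CNDC is the intersection of exactly one horizontal and one vertical edge of $T_1$, and that vertices shared with the diagonalizable part are counted as in Theorem~\ref{thm:cndc_dim}, so that the column counts $N$ and $N+m$ are consistent.
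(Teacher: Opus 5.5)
Your proposal is correct and follows the paper's proof. Both arguments use rank--nullity for $M_1$, then $\mathrm{rank}(K_1)=s(d+1)$ from the full-row-rank Vandermonde blocks, then a parameterization of $\ker K_1$ by $\boldsymbol\alpha$ so that the consistency conditions become $K\boldsymbol\alpha=\mathbf{0}$ with $K=K_2Z$, and finally equate nullities. Your explicit embedding $\iota$, satisfying $K_1\iota=M_1$ and $\operatorname{im}\iota=\ker K_2$, together with the column count $\sum_j n(l_j)=n_{\textit{CNDC}}+m$, gives a rigorous version of the bijection that the paper only asserts.
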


\begin{proof}
By the rank-nullity theorem, the dimension of the null space of the conformality matrix $M_1$ corresponding to the CNDC is given by:
\begin{equation}\label{eq:proof_null_M1}
\dim(\ker(M_1)) = n_{\textit{CNDC}} - \mathrm{rank}(M_1).
\end{equation}

Since each of the $m$ multi-vertices is split into two independent horizontal and vertical components, the column dimension of the system changes from $n_{\textit{CNDC}}$ to $n_{\textit{CNDC}} + m$. The decoupled conformality matrix $K_1$ operates on these $n_{\textit{CNDC}} + m$ columns. Since $K_1$ is composed of $s$ independent Vandermonde-like submatrices and every T $l$-edge contains at least $d+2$ vertices by the standing assumption in Section~\ref{sec:prelim}, each block has full row rank $d+1$. Therefore, $\mathrm{rank}(K_1) = s(d+1)$.

By applying the rank-nullity theorem to $K_1$, the dimension of its null space—representing the total number of block-wise free parameters across the entire CNDC—is exactly the number of columns minus the number of rows:
\[ \dim(\ker(K_1)) = (n_{\textit{CNDC}} + m) - s(d+1). \]

With respect to the chosen basis of $\ker(K_1)$, every vector in $\ker(K_1)$ has a unique coefficient vector $\boldsymbol{\alpha} \in \mathbb{R}^{n_{\textit{CNDC}} + m - s(d+1)}$, and the corresponding decoupled multi-vertex cofactors are given by $\boldsymbol{\delta}_{\text{dec}} = Z \boldsymbol{\alpha}$. If another basis of $\ker(K_1)$ is chosen, then $Z$ is replaced by $ZA$ for some nonsingular matrix $A$, and consequently $K$ is replaced by $KA$; therefore $\mathrm{rank}(K)$ is well-defined.

To enforce the original topological connectivity, these decoupled vertex cofactors must simultaneously satisfy the consistency conditions encoded by $K_2$, meaning $K_2 \boldsymbol{\delta}_{\text{dec}} = \mathbf{0}$. Substituting the parameterization yields
\[ K_2 (Z \boldsymbol{\alpha}) = (K_2 Z) \boldsymbol{\alpha} = K \boldsymbol{\alpha} = \mathbf{0}. \]
The map from an original cofactor vector satisfying $M_1\boldsymbol{\delta}=\mathbf{0}$ to the corresponding decoupled vector satisfying $\delta_i^h=\delta_i^v$ is bijective, and the latter vectors are exactly the vectors in $\ker(K_1)$ whose coefficients satisfy $K\boldsymbol{\alpha}=\mathbf{0}$. Hence the solution space of $M_1\boldsymbol{\delta}=\mathbf{0}$ is naturally isomorphic to $\{\boldsymbol{\alpha}:K\boldsymbol{\alpha}=\mathbf{0}\}$. The dimension of this parameter space satisfies
\begin{align}\label{eq:proof_intersection}
\dim(\ker(M_1)) &= \dim(\ker(K)) \nonumber \\
&= (n_{\textit{CNDC}} + m - s(d+1)) - \mathrm{rank}(K).
\end{align}

Equating (\ref{eq:proof_null_M1}) and (\ref{eq:proof_intersection}), we obtain:
\[ n_{\textit{CNDC}} - \mathrm{rank}(M_1) = n_{\textit{CNDC}} + m - s(d+1) - \mathrm{rank}(K). \]
Subtracting $n_{\textit{CNDC}}$ from both sides and rearranging the terms yields:
\[ \mathrm{rank}(M_1) = s(d+1) - m + \mathrm{rank}(K). \]
This completes the proof.
\end{proof}

Combining Theorem~\ref{thm:cndc_dim} and Theorem~\ref{thm:cndc_rank}, we can readily derive the following corollary, which explicitly evaluates the dimension of a spline space using the rank of the condensed coupling matrix $K$:

\begin{corollary}\label{cor:cndc_spline_dim}
Let $\mathscr{T}$ be a T-mesh and $T(\mathscr{T})$ be its T-connected component with a complete partition $\{T_1, T_2\}$, where $T_1$ is the CNDC containing $m$ multi-vertices. Then, the dimension of the spline space $S_d(\mathscr{T})$ can be explicitly evaluated via the condensed coupling matrix $K$ as:
\begin{equation}\label{eq:dim_formula_K}
\dim S_{d}(\mathscr{T}) = (d+1)^2 + (c-t)(d+1) + n_v + m - \mathrm{rank}(K),
\end{equation}
where $c$ is the number of all cross-cut edges, $t$ is the total number of T $l$-edges in $T(\mathscr{T})$, $n_v$ is the number of all interior vertices, and $K$ is the condensed coupling matrix defined in Definition~\ref{def:decoupled_matrices}.
\end{corollary}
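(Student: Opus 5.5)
The plan is a direct substitution: insert the rank identity of Theorem~\ref{thm:cndc_rank} into the dimension formula of Theorem~\ref{thm:cndc_dim}, then simplify. Before doing so, I will check that the hypotheses line up. Theorem~\ref{thm:cndc_dim} needs only the complete partition $\{T_1,T_2\}$ with $T_1=\{l_1,\dots,l_s\}$ the CNDC, and this partition is unique by Proposition~\ref{prop:uniqueness_complete_partition}. Theorem~\ref{thm:cndc_rank} needs the standing assumption that every T $l$-edge carries at least $d+2$ vertices. This gives $\mathrm{rank}(K_1)=s(d+1)$ and makes $K$ well defined up to right multiplication by a nonsingular matrix, so $\mathrm{rank}(K)$ is an invariant of the configuration.

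The computation is then
\[
\dim S_d(\mathscr{T}) = (d+1)^2+(c+s-t)(d+1)+n_v-\bigl(s(d+1)-m+\mathrm{rank}(K)\bigr),
\]
and the $s(d+1)$ terms cancel, leaving $(d+1)^2+(c-t)(d+1)+n_v+m-\mathrm{rank}(K)$, which is \eqref{eq:dim_formula_K}.

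The one point needing care is that the $m$ in the corollary must be the number of multi-vertices of the CNDC $T_1$, not of the whole $T(\mathscr{T})$. This is exactly the $m$ used to build $K_1$, $K_2$ and $K$ in Definition~\ref{def:decoupled_matrices}, and I will state this explicitly.

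As a consistency check, I will compare with Theorem~\ref{thm:cndc_dim_mult}. Both formulas describe the same dimension, so they force $\mathrm{rank}(M_1^{\text{mult}}) = s(d+1)-n_{\textit{CNDC}}+\mathrm{rank}(K)$. This agrees with the reduction $\mathrm{rank}(M_1)=(n_{\textit{CNDC}}-m)+\mathrm{rank}(M_1^{\text{mult}})$ used in Example~\ref{ex:conformality_matrix}. The check is not needed for the proof.
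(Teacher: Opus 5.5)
Your proposal is correct and follows the paper's proof exactly: substitute $\mathrm{rank}(M_1)=s(d+1)-m+\mathrm{rank}(K)$ from Theorem~\ref{thm:cndc_rank} into the formula of Theorem~\ref{thm:cndc_dim}, then cancel the $s(d+1)$ terms. Your remarks on the hypotheses (the standing assumption $n(l)\ge d+2$, and $m$ counting only the multi-vertices of the CNDC) and your consistency check against Theorem~\ref{thm:cndc_dim_mult} are accurate, but they are not needed for the argument.
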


\begin{proof}
By Theorem~\ref{thm:cndc_dim}, the dimension of the spline space $S_d(\mathscr{T})$ depends on the rank of the conformality matrix $M_1$ corresponding to the CNDC as follows:
\[
\dim S_{d}(\mathscr{T}) = (d+1)^2 + (c+s-t)(d+1) + n_v - \mathrm{rank}(M_1).
\]
According to Theorem~\ref{thm:cndc_rank}, the rank of $M_1$ can be decoupled and expressed in terms of the condensed coupling matrix $K$ as $\mathrm{rank}(M_1) = s(d+1) - m + \mathrm{rank}(K)$. Substituting this relation directly into the dimension formula yields:
\begin{align*}
\dim S_{d}(\mathscr{T}) &= (d+1)^2 + (c+s-t)(d+1) + n_v - \big(s(d+1) - m + \mathrm{rank}(K)\big) \\
&= (d+1)^2 + c(d+1) + s(d+1) - t(d+1) + n_v - s(d+1) + m - \mathrm{rank}(K).
\end{align*}
Expanding the terms reveals that the components involving the local CNDC edge count $s$ cancel out exactly, i.e., $s(d+1) - s(d+1) = 0$. Combining the remaining terms immediately results in:
\[
\dim S_{d}(\mathscr{T}) = (d+1)^2 + (c-t)(d+1) + n_v + m - \mathrm{rank}(K).
\]
This completes the proof.
\end{proof}

In summary, the decoupling technique offers several profound theoretical and computational advantages over the traditional monolithic evaluation of the conformality system. By replacing the shared multi-vertex cofactors with independent horizontal and vertical variable pairs, this technique prevents the horizontal and vertical edge subsystems from overlapping, thereby partitioning the global matrix into independent diagonal blocks.

Consequently, the computation of the conformality matrix rank is successfully parallelized down to the level of individual T $l$-edges, transforming a massive, coupled global problem into independent local subproblems. Furthermore, because each decoupled block inherits a structured Vandermonde-like form, its local null space can be derived completely analytically using the Lagrange basis polynomials provided in Lemma~\ref{lemma:vander_rref}. This entirely bypasses the need for expensive, case-by-case elementary row operations on complex mesh topologies. Ultimately, the global consistency requirements are compressed into the highly reduced coupling matrix $K$, whose row dimension is $m$ and whose column dimension $n_{\textit{CNDC}}+m-s(d+1)$ is the total number of block-wise free parameters, providing a uniform, scalable, and parallelizable framework for spline dimension evaluation.

The complete step-by-step procedure of this proposed decoupling framework is formalized in Algorithm~\ref{alg:decoupling_method}.

\begin{algorithm}[htbp]
\caption{Decoupling Framework for Calculating Spline Space Dimension}
\label{alg:decoupling_method}
\begin{algorithmic}[1]
\REQUIRE A T-mesh $\mathscr{T}$ and a degree $d$.
\ENSURE The exact dimension of the spline space $S_d(\mathscr{T})$.

\STATE \textbf{CNDC Extraction:} Extract the unique CNDC $T_1$ from $\mathscr{T}$ using Algorithm 1 in \cite{huang2024stability}, thereby obtaining the number of T $l$-edges $s$, vertices $n_{\textit{CNDC}}$, and multi-vertices $m$.
\STATE \textbf{Multi-vertex Cofactor Decoupling:} For each interior multi-vertex $v_i = l_{j_i} \cap l_{k_i}$ ($i=1,\dots,m$), decouple its shared cofactor by introducing a pair of independent horizontal and vertical auxiliary variables $(\delta_i^h, \delta_i^v) \in \mathbb{R}^2$ associated with the T $l$-edges $l_{j_i}$ and $l_{k_i}$ respectively.
\STATE \textbf{Parallel Local Solution:} Evaluate in parallel the constraints along each individual T $l$-edge. Apply Lemma~\ref{lemma:vander_rref} locally on each independent edge block to analytically express the decoupled vertex cofactors via local Lagrange basis functions in terms of the block-wise free parameter vector $\boldsymbol{\alpha}$.
\STATE \textbf{Condensed Coupling Matrix Construction:} Impose the identity constraints $\delta_i^h = \delta_i^v$ to force the decoupled pairs back to the original unique multi-vertex variables. By substituting the local basis expressions from Step 3 into these constraints (represented by the matrix $K_2 \in \mathbb{R}^{m \times 2m}$), directly construct the condensed coupling matrix $K$.
\STATE \textbf{Dimension Calculation:} Determine $\mathrm{rank}(K)$ from the highly reduced condensed matrix, and calculate $\dim S_d(\mathscr{T})$ exactly via the unified dimension formula \eqref{eq:dim_formula_K}.
\end{algorithmic}
\end{algorithm}

\section{Dimension bounds via the decoupling framework}

By leveraging the decoupling framework and the dimension formula established in Section 3, we derive explicit upper and lower bounds for the dimension of the bivariate spline space $S_d(\mathscr{T})$. We first establish a fundamental algebraic link between the condensed coupling matrix $K$ and the reduced multi-vertex conformality matrix $M_1^{\text{mult}}$. Utilizing the relation alongside the dimension formula in Corollary~\ref{cor:cndc_spline_dim}, we formalize a new pair of dimension bounds. The lower bound is algebraically identical to the one in Corollary~\ref{cor:dim_bounds}, whereas the upper bound is obtained from a structural lower estimate for $\mathrm{rank}(K)$. We then analyze the intrinsic algebraic structure of $K$, derive the permutation formula for the upper bound, and compare it with the homological upper bound in~\cite{dim2014}. The comparison shows that, for a fixed T-mesh in the same case of bi-degree $(d,d)$, and smoothness order $(d-1,d-1)$,  the algebraic correction term agrees with Mourrain's ordered-segment term after the notation is translated. The advantage of the present formulation is that the diagonalizable component is separated first, so the ordering problem is reduced from all T $l$-edges to only those in the CNDC. Finally, the example introduced in Section~3 shows that both the upper and lower bounds  can be attained.

\subsection{Dimension bounds from the condensed coupling matrix}\label{subsec:bounds_condensed_matrix}

\begin{lemma}\label{lem:rank_identity}
Let $K \in \mathbb{R}^{m \times \left(n_{\textit{CNDC}}+m-s(d+1)\right)}$ be the condensed coupling matrix defined in Definition~\ref{def:decoupled_matrices}, and let $M_1^{\text{mult}} \in \mathbb{R}^{(s(d+1) - n_{\textit{CNDC}} + m) \times m}$ be the reduced multi-vertex conformality matrix defined in Theorem~\ref{thm:cndc_dim_mult}. Then the ranks of these two matrices satisfy the following linear identity:
\begin{equation}\label{eq:rank_identity_formula}
\mathrm{rank}(K) = \mathrm{rank}(M_1^{\text{mult}}) + n_{\textit{CNDC}} - s(d+1).
\end{equation}
\end{lemma}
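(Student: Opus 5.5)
The plan is to compute $\mathrm{rank}(M_1)$ in two independent ways and compare. Both expressions are already available from earlier results, so no new matrix computation is needed.

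The first expression comes from Theorem~\ref{thm:cndc_dim_mult}. It is an equality for $\dim S_d(\mathscr{T})$, and comparing it with Theorem~\ref{thm:cndc_dim} gives
\[
\mathrm{rank}(M_1) = n_{\textit{CNDC}} - m + \mathrm{rank}(M_1^{\text{mult}}).
\]
Alternatively, this can be seen directly from the reduction. Elementary row and column operations eliminate the $n_{\textit{CNDC}}-m$ mono-vertex cofactors. Each mono-vertex lies on exactly one T $l$-edge, and on each edge the first $d+1$ columns form an invertible Vandermonde block. So each eliminated mono-vertex column contributes one pivot, and what remains is $M_1^{\text{mult}}$. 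Hence $\mathrm{rank}(M_1)=(n_{\textit{CNDC}}-m)+\mathrm{rank}(M_1^{\text{mult}})$. I would state this identity explicitly, since Theorem~\ref{thm:cndc_dim_mult} gives it only implicitly through the dimension formula.

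The second expression is Theorem~\ref{thm:cndc_rank}:
\[
\mathrm{rank}(M_1)=s(d+1)-m+\mathrm{rank}(K).
\]
Equating the two and cancelling $-m$ gives
\[
\mathrm{rank}(K)=\mathrm{rank}(M_1^{\text{mult}})+n_{\textit{CNDC}}-s(d+1),
\]
which is \eqref{eq:rank_identity_formula}.

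The only delicate step is justifying the first identity rigorously: the elimination must really remove exactly $n_{\textit{CNDC}}-m$ independent pivots. If one takes Theorem~\ref{thm:cndc_dim_mult} as given, this is immediate from the two dimension formulas for the same space. As a sanity check, I would verify the identity on Example~\ref{ex:decoupled_conformality_matrix}. There $n_{\textit{CNDC}}=16$ and $s(d+1)=16$, so the identity says $\mathrm{rank}(K)=\mathrm{rank}(M_1^{\text{mult}})$. This holds, since both matrices are $4\times4$ with equivalent determinants.
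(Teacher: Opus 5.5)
Your proof is correct and is essentially the paper's. The paper equates the dimension formulas of Corollary~\ref{cor:cndc_spline_dim} and Theorem~\ref{thm:cndc_dim_mult}, and Corollary~\ref{cor:cndc_spline_dim} is just Theorem~\ref{thm:cndc_dim} with Theorem~\ref{thm:cndc_rank} substituted, so your comparison of the two expressions for $\mathrm{rank}(M_1)$ is the same cancellation done one step earlier. Your direct-elimination remark is not needed; if you keep it, the fact that makes all $n_{\textit{CNDC}}-m$ mono-vertex columns independent pivots is that each CNDC edge carries at most $d$ of them ($n(l)-m(l)\le d$, by completeness of the partition), so they are distinct-node Vandermonde columns whose row supports are disjoint across edges.
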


\begin{proof}
According to Corollary~\ref{cor:cndc_spline_dim}, the dimension of the spline space $S_d(\mathscr{T})$ expressed via $\mathrm{rank}(K)$ is:
\[
\dim S_{d}(\mathscr{T}) = (d+1)^2 + (c-t)(d+1) + n_v + m - \mathrm{rank}(K).
\]
Meanwhile, Theorem~\ref{thm:cndc_dim_mult} provides the dimension formula in terms of $\mathrm{rank}(M_1^{\text{mult}})$:
\[
\dim S_{d}(\mathscr{T}) = (d+1)^2 + (c+s-t)(d+1) + n_v - n_{\textit{CNDC}} + m - \mathrm{rank}(M_1^{\text{mult}}).
\]
Since both formulas uniquely determine the exact dimension of the identical spline space $S_d(\mathscr{T})$, we can equate the two expressions directly:
\begin{align*}
(d+1)^2 + (c-t)(d+1) + n_v + m - \mathrm{rank}(K) = & \, (d+1)^2 + (c+s-t)(d+1) \\
& + n_v - n_{\textit{CNDC}} + m - \mathrm{rank}(M_1^{\text{mult}}).
\end{align*}
By canceling the common terms $(d+1)^2$, $(c-t)(d+1)$, $n_v$, and $m$ from both sides, the equation simplifies to:
\[
\mathrm{rank}(K) = \mathrm{rank}(M_1^{\text{mult}}) + n_{\textit{CNDC}} - s(d+1).
\]
This completes the proof.
\end{proof}

The rank identity established in Lemma~\ref{lem:rank_identity} indicates that the reduced multi-vertex conformality matrix $M_1^{\text{mult}}$ and the condensed coupling matrix $K$ are \textbf{not the same matrix}. Nevertheless, according to Corollary~\ref{cor:cndc_spline_dim} and Theorem~\ref{thm:cndc_dim_mult}, both $M_1^{\text{mult}}$ and $K$ simultaneously encode the stability information of the spline space dimension. Despite their shared ability to characterize dimension stability, their mathematical formulations and computational behaviors are fundamentally different. The matrix $M_1^{\text{mult}}$ is obtained through sequential elementary row operations that progressively eliminate single-vertex constraints. This global elimination process couples cross-edge degrees of freedom, making the formulation structurally complex, computationally expensive, and difficult to compute in parallel. In contrast, the condensed matrix $K$ is constructed through a decoupling approach that utilizes independent local edge bases. By isolating the local geometric constraints into distinct blocks, $K$ completely avoids costly global row operations and substantially reduces the system size to depend only on the number of multi-vertices $m$. This decoupled structure not only simplifies the algebraic formulation but also offers a high degree of block-wise parallelism, making $K$ much more efficient and scalable than $M_1^{\text{mult}}$ for practical computation.

By leveraging the algebraic structure of the condensed coupling matrix $K$ and estimating its maximum possible rank, we can establish a sharp, geometrically intuitive lower bound for the global spline space dimension without explicitly calculating the matrix rank.

\begin{theorem}\label{thm:dim_lower_bound_K}
Let $K \in \mathbb{R}^{m \times \dim(\ker K_1)}$ be the condensed coupling matrix defined in Definition~\ref{def:decoupled_matrices}. The dimension of the spline space $S_d(\mathscr{T})$ is bounded from below by:
\begin{equation}\label{eq:lower_bound_via_K}
\dim S_{d}(\mathscr{T}) \ge L_K(\mathscr{T}),
\end{equation}
where the lower bound $L_K(\mathscr{T})$ is given by:
\[
L_K(\mathscr{T}) = (d+1)^2 + (c-t)(d+1) + n_v + \big(s(d+1) - n_{\textit{CNDC}}\big)_+,
\]
and $(\cdot)_+ = \max(\cdot, 0)$ denotes the non-negative truncation operator.
\end{theorem}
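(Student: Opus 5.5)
The plan is to start from the dimension formula of Corollary~\ref{cor:cndc_spline_dim},
\[
\dim S_{d}(\mathscr{T}) = (d+1)^2 + (c-t)(d+1) + n_v + m - \mathrm{rank}(K),
\]
and bound $\mathrm{rank}(K)$ from above by the smaller of its two matrix dimensions. By Definition~\ref{def:decoupled_matrices}, $K\in\mathbb{R}^{m\times(n_{\textit{CNDC}}+m-s(d+1))}$. The column count is a genuine nonnegative integer, namely $\dim\ker(K_1)$, because $\mathrm{rank}(K_1)=s(d+1)$ under the no-vanishable-edge assumption, as shown in the proof of Theorem~\ref{thm:cndc_rank}. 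This gives
\[
\mathrm{rank}(K)\le \min\big(m,\; n_{\textit{CNDC}}+m-s(d+1)\big).
\]

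Next I substitute this estimate into the formula, exactly as in the proof of Corollary~\ref{cor:dim_bounds}. This yields
\[
m-\mathrm{rank}(K)\ \ge\ m-\min\big(m,\,n_{\textit{CNDC}}+m-s(d+1)\big)=\max\big(0,\,s(d+1)-n_{\textit{CNDC}}\big)=\big(s(d+1)-n_{\textit{CNDC}}\big)_+ .
\]
Inserting this into the corollary's formula immediately gives $\dim S_d(\mathscr{T})\ge L_K(\mathscr{T})$.

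As a consistency check, I would also show that $L_K$ equals the earlier bound $L$ of Corollary~\ref{cor:dim_bounds}. The difference is
\[
L-L_K = s(d+1)-n_{\textit{CNDC}}+\big(n_{\textit{CNDC}}-s(d+1)\big)_+-\big(s(d+1)-n_{\textit{CNDC}}\big)_+ .
\]
Using $x_+-(-x)_+=x$ with $x=n_{\textit{CNDC}}-s(d+1)$, this difference is $0$. That matches the remark at the start of Section~4 that the two lower bounds are algebraically identical. Lemma~\ref{lem:rank_identity} could alternatively be used to transfer the bound directly from $M_1^{\text{mult}}$.

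There is no real obstacle in this argument. The only point needing care is that the column count of $K$ is well defined and nonnegative, which rests on the full-row-rank property of each Vandermonde block (every T $l$-edge has $n(l)\ge d+2$ vertices). Sharpness is not part of this statement and would be addressed separately through the example.
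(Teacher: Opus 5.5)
Your proposal is correct and follows the paper's proof. Both bound $\mathrm{rank}(K)$ by $\min\bigl(m,\,n_{\textit{CNDC}}+m-s(d+1)\bigr)$, substitute this into the formula of Corollary~\ref{cor:cndc_spline_dim}, and simplify using $m-\min(m,X)=(m-X)_+$. Your consistency check that $L_K=L$ is what the paper proves separately in Proposition~\ref{prop:lower_bound_equivalence}.
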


\begin{proof}
Since $K$ has $m$ rows and $n_{\textit{CNDC}}+m-s(d+1)$ columns,
\[
\mathrm{rank}(K) \le \min\big(m, \, n_{\textit{CNDC}} + m - s(d+1)\big).
\]
Subtracting this inequality from the exact dimension formula in Corollary~\ref{cor:cndc_spline_dim} gives
\[
\dim S_{d}(\mathscr{T}) \ge (d+1)^2 + (c-t)(d+1) + n_v + m - \min\big(m, \, n_{\textit{CNDC}} + m - s(d+1)\big).
\]
By employing the algebraic identity $m - \min(m, X) = (m - X)_+$ and substituting $X = n_{\textit{CNDC}} + m - s(d+1)$, the terminal block simplifies as follows:
\[
m - \min\big(m, \, n_{\textit{CNDC}} + m - s(d+1)\big) = \big(m - (n_{\textit{CNDC}} + m - s(d+1))\big)_+ = \big(s(d+1) - n_{\textit{CNDC}}\big)_+.
\]
Plugging this result back into the inequality directly confirms the lower bound $L_K(\mathscr{T})$. This completes the proof.
\end{proof}

\begin{proposition}\label{prop:lower_bound_equivalence}
The lower bound $L_K(\mathscr{T})$ defined in Theorem~\ref{thm:dim_lower_bound_K} is algebraically identical to the traditional lower bound $L(\mathscr{T})$ derived via $M_1^{\text{mult}}$ in Corollary~\ref{cor:dim_bounds}.
\end{proposition}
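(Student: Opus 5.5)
The plan is a direct algebraic comparison of the two closed-form expressions. No rank argument is needed at this stage: Theorem~\ref{thm:dim_lower_bound_K} and Corollary~\ref{cor:dim_bounds} already give $L_K(\mathscr{T})$ and $L(\mathscr{T})$ explicitly in terms of the same topological quantities $d$, $c$, $t$, $s$, $n_v$ and $n_{\textit{CNDC}}$. By Proposition~\ref{prop:uniqueness_complete_partition} these quantities are uniquely determined by the mesh, so the two bounds are functions of the same data and can be compared term by term.

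First I would put both bounds over a common skeleton. I expand $(c+s-t)(d+1)=(c-t)(d+1)+s(d+1)$ in $L(\mathscr{T})$, so that
\[
L(\mathscr{T}) = (d+1)^2+(c-t)(d+1)+n_v+\big(s(d+1)-n_{\textit{CNDC}}\big)+\big(n_{\textit{CNDC}}-s(d+1)\big)_+ .
\]
Setting $X := s(d+1)-n_{\textit{CNDC}}$, the common part $(d+1)^2+(c-t)(d+1)+n_v$ cancels. The claim then reduces to the scalar identity
\[
X + (-X)_+ = X_+ .
\]
This is immediate from the elementary fact that $a_+ - (-a)_+ = a$ for every real $a$, which I would verify by the two cases $X\ge 0$ and $X<0$.

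As a consistency check, and an alternative route, I would also note that the equivalence follows from Lemma~\ref{lem:rank_identity}. Indeed, $\mathrm{rank}(K)=\mathrm{rank}(M_1^{\text{mult}})+n_{\textit{CNDC}}-s(d+1)$ shows that the exact dimension formulas in Corollary~\ref{cor:cndc_spline_dim} and Theorem~\ref{thm:cndc_dim_mult} agree. Both lower bounds are obtained by replacing the respective rank by the minimum of the row and column sizes of the respective matrix. The sizes of $K$, namely $m\times(n_{\textit{CNDC}}+m-s(d+1))$, are exactly the sizes of $M_1^{\text{mult}}$, namely $(s(d+1)-n_{\textit{CNDC}}+m)\times m$, shifted by the same constant $n_{\textit{CNDC}}-s(d+1)$ in both entries. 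Hence the maximal possible ranks are also shifted by that constant, and the truncated bounds coincide.

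I do not expect a genuine obstacle. The only point requiring care is the sign bookkeeping in the truncation operator: one bound carries $(s(d+1)-n_{\textit{CNDC}})_+$ and the other $(n_{\textit{CNDC}}-s(d+1))_+$, and these are reconciled only by the untruncated term $s(d+1)-n_{\textit{CNDC}}$ absorbed from $(c+s-t)(d+1)-n_{\textit{CNDC}}$.
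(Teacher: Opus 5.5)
Your proposal is correct and matches the paper's proof. Both compare the closed forms directly, and your scalar identity $X+(-X)_+=X_+$ with $X=s(d+1)-n_{\textit{CNDC}}$, checked for $X\ge 0$ and $X<0$, is the paper's Case 1/Case 2 split in compressed form. Your side remark via Lemma~\ref{lem:rank_identity} is also sound, since the row and column sizes of $K$ are those of $M_1^{\text{mult}}$ shifted by the same constant $n_{\textit{CNDC}}-s(d+1)$.
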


\begin{proof}
Recall the traditional lower bound formula from Corollary~\ref{cor:dim_bounds}:
\begin{equation}\label{eq:traditional_lower_bound}
L(\mathscr{T}) = (d+1)^2 + (c+s-t)(d+1) + n_v - n_{\textit{CNDC}} + \big(n_{\textit{CNDC}} - s(d+1)\big)_+.
\end{equation}
We establish the identity $L_K(\mathscr{T}) \equiv L(\mathscr{T})$ by evaluating both expressions under two cases.

\medskip
\noindent\textbf{Case 1:} $s(d+1) \ge n_{\textit{CNDC}}$ \\
In this regime, the truncation terms evaluate to $\big(n_{\textit{CNDC}} - s(d+1)\big)_+ = 0$ and $\big(s(d+1) - n_{\textit{CNDC}}\big)_+ = s(d+1) - n_{\textit{CNDC}}$. Both formulations collapse directly to:
\[
L(\mathscr{T}) = L_K(\mathscr{T}) = (d+1)^2 + (c-t)(d+1) + n_v + s(d+1) - n_{\textit{CNDC}}.
\]

\medskip
\noindent\textbf{Case 2:} $s(d+1) < n_{\textit{CNDC}}$\\
In this regime, the truncation terms evaluate to $\big(n_{\textit{CNDC}} - s(d+1)\big)_+ = n_{\textit{CNDC}} - s(d+1)$ and $\big(s(d+1) - n_{\textit{CNDC}}\big)_+ = 0$. Substituting these terms into \eqref{eq:traditional_lower_bound} and $L_K(\mathscr{T})$ yields direct algebraic cancellation of $n_{\textit{CNDC}}$ and $s(d+1)$ respectively, reducing both bounds to:
\[
L(\mathscr{T}) = L_K(\mathscr{T}) = (d+1)^2 + (c-t)(d+1) + n_v.
\]

\medskip
Since $L_K(\mathscr{T}) = L(\mathscr{T})$ holds in both scenarios, the proof is complete.
\end{proof}

It should be noted that the lower bound established in Theorem~\ref{thm:dim_lower_bound_K} is an estimate derived within the T-connected component framework, ensuring that the bound obtained from $M_1^{\text{mult}}$ remains consistent with that from $K$. While this lower bound is already effective, a corresponding upper bound for the space dimension has not yet been established. Therefore, in the following, we will provide a sharp upper bound for the spline space dimension.

Before introducing the main theorem, we first illustrate the structure of the condensed coupling matrix $K$ through a concrete example.

\begin{example}\label{ex:matrix_structure_illustration}
Consider the condensed coupling matrix $K$ from Example~\ref{ex:decoupled_conformality_matrix}. The columns of $K$ can be divided into $4$ blocks, where each block corresponds to the number of degrees of freedom of the decoupled vertex cofactors on a T $l$-edge (which is exactly $1$ degree of freedom per T $l$-edge). Meanwhile, the rows correspond to the coupling conditions at the multi-vertices. To explicitly visualize this structure, we label the columns with the free parameters $\alpha_j$ and the rows with the multi-vertices as follows:
\[
K = \bordermatrix{
    & \alpha_1 & \alpha_3 & \alpha_2 & \alpha_4 \cr
    v_{11} & -g_5^{(1)}(t_4) & 0 & g_4^{(2)}(s_5) & 0 \cr
    v_{12} & 1 & 0 & 0 & -1 \cr
    v_{13} & 0 & -g_5^{(3)}(t_4) & -1 & 0 \cr
    v_{14} & 0 & 1 & 0 & g_5^{(4)}(s_4)
}.
\]
Since for each T $l$-edge, the number of multi-vertices is greater than the local degree of freedom 1, there must be one multi-vertex within each row block acting as a free variable. For instance, for the parameter $\alpha_1$, the horizontal multi-vertex cofactor $\delta_{12}^h$ at $v_{12}$ serves as the free variable satisfying $\delta_{12}^h = \alpha_1$, which directly yields the entry $1$ in the second row and first column of $K$.
\[
\begin{pmatrix}
-g_5^{(1)}(t_4) & 0 & g_4^{(2)}(s_5) & 0 \\
\boxed1 & 0 & 0 & -1 \\
0 & -g_5^{(3)}(t_4) & -1 & 0 \\
0 & \boxed1 & 0 & g_5^{(4)}(s_4)
\end{pmatrix}
\]

Next, we consider identifying a maximal-order square upper triangular submatrix in the following manner: starting from the first column, we look for a row containing a $1$ or $-1$ such that all other non-zero elements in that row are located strictly to its right. We then proceed sequentially column by column through the following steps:
\begin{itemize}
    \item \textbf{Step 1:} For the first column, the second row contains a $1$, and its remaining non-zero element ($-1$) lies strictly to its right.
    \item \textbf{Step 2:} For the second column, the fourth row contains a $1$, and its other non-zero element ($g_5^{(4)}(s_4)$) lies strictly to its right. Beyond these, no further eligible rows can be found for the remaining columns, and the search terminates.
\end{itemize}

This extraction process targets the identified second and fourth rows, yielding a $2 \times 4$ submatrix from which a $2 \times 2$ square upper triangular submatrix with a rank of $2$ is isolated:
\[
\begin{pmatrix}
-g_5^{(1)}(t_4) & 0 & g_4^{(2)}(s_5) & 0 \\
1 & 0 & 0 & -1 \\
0 & -g_5^{(3)}(t_4) & -1 & 0 \\
0 & 1 & 0 & g_5^{(4)}(s_4)
\end{pmatrix}
\longrightarrow
\begin{pmatrix}
1 & 0 & 0 & -1 \\
0 & 1 & 0 & g_5^{(4)}(s_4)
\end{pmatrix}
\longrightarrow
\begin{pmatrix}
1 & 0 \\
0 & 1
\end{pmatrix}
\]

By considering all possible permutations of the T $l$-edges, we find that there exists an arrangement that yields a $3 \times 3$ square upper triangular submatrix. Specifically, this is achieved under the ordering of the free parameters $(\alpha_1, \alpha_2, \alpha_3, \alpha_4)$, which corresponds to the ordering of the T $l$-edges represented as $v_1v_7 \succ v_3v_4 \succ v_2v_8 \succ v_5v_6$. This column permutation reorders the matrix, allowing rows 2, 3, and 4 to be sequentially selected to form a $3 \times 3$ square upper triangular submatrix, demonstrating that the rank of $K$ is at least $3$:
\[
\bordermatrix{
    & \alpha_1 & \alpha_2 & \alpha_3 & \alpha_4 \cr
    & -g_5^{(1)}(t_4) & g_4^{(2)}(s_5) & 0 & 0 \cr
    & 1 & 0 & 0 & -1 \cr
    & 0 & -1 & -g_5^{(3)}(t_4) & 0 \cr
    & 0 & 0 & 1 & g_5^{(4)}(s_4)
}
\longrightarrow
\begin{pmatrix}
1 & 0 & 0 \\
0 & -1 & -g_5^{(3)}(t_4) \\
0 & 0 & 1
\end{pmatrix}
\]
Furthermore, Example~\ref{ex:decoupled_conformality_matrix} shows that the estimate $\mathrm{rank}(K)\geq 3$ is attained: when the determinant condition in that example vanishes, one has $\mathrm{rank}(K)=3$. Thus the rank estimate obtained here cannot be increased for this T-mesh without using additional geometric information.
\end{example}

Based on the row-selection mechanisms illustrated in Example~\ref{ex:matrix_structure_illustration}, we now establish a general estimate for the rank of the condensed coupling matrix $K$ by identifying its maximal-order upper triangular submatrix.

\begin{theorem}\label{thm:combinatorial_rank_K}
Let $\{l_1, l_2, \dots, l_s\}$ be the set of all $s$ T $l$-edges of the CNDC in the T-mesh $\mathscr{T}$. Let $\pi$ be a permutation of these $s$ T $l$-edges that prescribes their ordering. For each T $l$-edge $l_i$ ($i=1, 2, \dots, s$), let $u_\pi^+(l_i)$ denote its forward multi-vertex intersection count under the permutation $\pi$, defined as the number of multi-vertices on $l_i$ shared with the T $l$-edges that succeed $l_i$ (denoted as $l_i \succ l_j$). 

Then, the rank of the condensed coupling matrix $K$ satisfies the following combinatorial lower bound:
\begin{equation}\label{eq:max_triangular_minor}
    \mathrm{rank}(K) \ge \tau_{\max} = \max_{\pi} \sum_{i=1}^s \min\big\{n(l_i) - d - 1, u_\pi^+(l_i)\big\},
\end{equation}
where $n(l_i)$ is the number of vertices on $l_i$, and $d$ is the given spline degree.
\end{theorem}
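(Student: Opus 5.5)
Fix a permutation $\pi$ and relabel the T $l$-edges so that $l_1 \succ l_2 \succ \cdots \succ l_s$. The goal is a square submatrix of $K$ of order $\sum_i \min\{n(l_i)-d-1,\,u_\pi^+(l_i)\}$ that is block upper triangular with nonsingular diagonal blocks. Since $\mathrm{rank}(K)$ does not depend on the chosen basis of $\ker(K_1)$ (Definition~\ref{def:decoupled_matrices}), we may pick a convenient block-wise basis. Maximizing over $\pi$ then gives \eqref{eq:max_triangular_minor}.

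\textbf{Step 1: a basis adapted to $\pi$.} On each $l_i$ the local block of $K_1$ is a $(d+1)\times n(l_i)$ Vandermonde matrix with distinct nodes. Any $d+1$ of its columns are therefore independent, and any set of at most $n(l_i)-d-1$ columns can be chosen as free variables. Write $U_i$ for the set of multi-vertices on $l_i$ shared with some later edge $l_j$, $j>i$, so $|U_i|=u_\pi^+(l_i)$. Let $k_i=\min\{n(l_i)-d-1,\,u_\pi^+(l_i)\}$ and choose $F_i\subset U_i$ with $|F_i|=k_i$. Complete $F_i$ to a set of $n(l_i)-d-1$ free vertices on $l_i$; the remaining $d+1$ vertices are the pivots. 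Lemma~\ref{lemma:vander_rref}, applied with the pivot columns ordered first, gives a local kernel basis indexed by the free vertices. In this basis the basis vector for a free vertex $w$ has entry $1$ at $w$, $0$ at every other free vertex, and Lagrange values at the pivots. These local bases assemble into a basis of $\ker(K_1)$ and hence into $Z$.

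\textbf{Step 2: the triangular structure.} A row of $K$ indexed by a multi-vertex $v=l_a\cap l_b$ has the form $\delta_v^{(a)}-\delta_v^{(b)}$. Its entries are nonzero only in columns belonging to edge blocks $a$ and $b$. Take as selected columns the parameters attached to $F_i$ in block $i$, for every $i$. Take as selected rows the multi-vertices in $\bigcup_i F_i$. These are distinct because each multi-vertex lies on exactly two edges, and it is placed in $F_i$ only for the earlier edge $l_i$. Now consider a selected row $v\in F_i$ with $v=l_i\cap l_j$, $j>i$. Within block $i$ it has entry $\pm1$ in the column of $v$ and $0$ in every other column of $F_i$ (and in all other free columns of block $i$), because $v$ is free on $l_i$. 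Its remaining nonzero entries lie in block $j>i$. Order rows and columns by block and, inside a block, by a common ordering of $F_i$. The resulting $\sum_i k_i$ square submatrix is block upper triangular with diagonal blocks $\pm I_{k_i}$, so it is nonsingular. Hence $\mathrm{rank}(K)\ge\sum_i k_i$.

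\textbf{Main obstacle.} The delicate point is Step 1: showing that the free-variable choices on the different edges are mutually compatible and that row entries coming from the earlier block are really zero off the diagonal. The decoupling is what makes this work. Because every multi-vertex has separate copies $\delta^h,\delta^v$, the choice of free variables on $l_i$ constrains only the copy belonging to $l_i$. The copy on the later edge $l_j$ may then be either pivot or free without affecting the triangular structure, since those entries lie to the right of the diagonal block. The one fact to check is that any subset of size at most $n(l_i)-d-1$ can be made free. This holds because every $(d+1)\times(d+1)$ minor of a Vandermonde matrix with distinct nodes is nonzero, which is exactly the hypothesis used in Lemma~\ref{lemma:vander_rref} after reordering $I$.
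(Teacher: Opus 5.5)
Your proposal is correct and follows essentially the same argument as the paper: on each T $l$-edge you choose a local kernel basis in which $k_i=\min\{n(l_i)-d-1,\,u_\pi^+(l_i)\}$ forward multi-vertices are free coordinate rows, which the Vandermonde structure allows. The corresponding rows and columns of $K$ then form a block upper triangular submatrix with diagonal blocks $\pm I_{k_i}$, and maximizing over $\pi$ gives the bound.
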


\begin{proof}
We prove the estimate for an arbitrary fixed permutation $\pi$ and then maximize over all permutations. Write
\[
q(l_i)=n(l_i)-d-1
\]
for the local nullity of the Vandermonde system on $l_i$.

\begin{itemize}
    \item[Step1:]\textbf{Local choice of free vertices.} Let $l$ be a T $l$-edge with $n(l)$ vertices and local nullity $q(l)$. For any subset $B$ of vertices on $l$ with $|B|\le q(l)$, one can choose a local null-space basis so that the rows indexed by $B$ contain an identity block of order $|B|$. Indeed, extend $B$ to a set of $q(l)$ free vertices, the remaining $d+1$ vertices may be used as reference vertices, and the corresponding Vandermonde matrix is nonsingular because the nodes on the T $l$-edge are distinct. Solving the reference variables in terms of the free variables gives a local basis in which the chosen free rows are exactly the coordinate rows. This is the only point where the Vandermonde structure is used.
    \item[Step2:]\textbf{Selecting admissible pivot rows.} For a fixed T $l$-edge $l_i$, let $U_\pi^+(l_i)$ be the set of multi-vertices on $l_i$ that are shared with succeeding T $l$-edges in the order $\pi$. Set
\[
k_i=\min\{q(l_i),u_\pi^+(l_i)\}.
\]
Choose a subset $B_i\subseteq U_\pi^+(l_i)$ with $|B_i|=k_i$. By Step 1, the local basis on $l_i$ can be chosen so that the rows corresponding to $B_i$ give $k_i$ pivot rows in the column block associated with $l_i$. These choices are made independently for different T $l$-edges because the decoupled local systems are independent.
   \item[Step3:]\textbf{Triangular structure.} Consider a row corresponding to a multi-vertex in $B_i$. This multi-vertex lies on $l_i$ and on a succeeding T $l$-edge, say $l_j$ with $l_i\succ l_j$. Therefore, in the coupling row, the only possible nonzero entries occur in the column blocks belonging to $l_i$ and $l_j$. There is no nonzero entry in any column block preceding $l_i$. Hence, after the selected rows and pivot columns are arranged according to the order $\pi$, all entries below the selected diagonal block positions vanish. The selected submatrix is therefore block upper triangular, and its diagonal blocks are identity matrices of sizes $k_i$.
\end{itemize}

The selected row sets $B_i$ are disjoint. Indeed, each multi-vertex is the intersection of exactly two T $l$-edges, and it is counted as a forward intersection only for the earlier one of the two edges in the ordering. Thus the construction produces a nonsingular square submatrix of $K$ of order
\[
\sum_{i=1}^s k_i=
\sum_{i=1}^s \min\{n(l_i)-d-1, u_\pi^+(l_i)\}.
\]
Consequently, for this fixed permutation $\pi$, the rank of $K$ is at least the above number. Taking the maximum over all permutations gives \eqref{eq:max_triangular_minor}.
\end{proof}

Alternatively, the maximal order $\tau_{\max}$ can be expressed through backward intersections. This form is useful because it converts the rank certificate into the correction term appearing in the upper dimension bound.

\begin{corollary}\label{cor:defect_formulation}
Let $m$ be the number of multi-vertices in the CNDC of T-mesh $\mathscr{T}$. For a given permutation $\pi$ of the $s$ T $l$-edges, let $u_\pi^-(l_i)$ denote the backward multi-vertex intersection count of $l_i$, defined as the number of multi-vertices on $l_i$ shared with the T $l$-edges that precede $l_i$ (denoted as $l_j \succ l_i$). 
Then, the maximal order $\tau_{\max}$ defined in Theorem~\ref{thm:combinatorial_rank_K} satisfies the following equivalent backward-intersection formula:
\begin{equation}\label{eq:max_triangular_defect}
    \tau_{\max} = m - \min_{\pi} \sum_{i=1}^s \big[ m(l_i) - n(l_i) + d + 1 - u_\pi^-(l_i) \big]_+,
\end{equation}
where $m(l_i)$ represents the number of multi-vertices located on $l_i$, and $[\cdot]_+ = \max\{\cdot, 0\}$.
\end{corollary}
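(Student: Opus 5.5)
The plan is to prove the identity term by term for a fixed permutation $\pi$, and then pass to the extremum. Since the maximum over $\pi$ of $m$ minus a quantity equals $m$ minus the minimum of that quantity, it suffices to show, for every fixed $\pi$,
\[
\sum_{i=1}^s \min\{q(l_i),u_\pi^+(l_i)\} \;=\; m-\sum_{i=1}^s\big[m(l_i)-q(l_i)-u_\pi^-(l_i)\big]_+,
\]
where $q(l_i)=n(l_i)-d-1$ as in the proof of Theorem~\ref{thm:combinatorial_rank_K}.

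First I will record two counting facts. By definition, every multi-vertex on $l_i$ is shared with exactly one other T $l$-edge of the CNDC, and that edge either precedes or succeeds $l_i$ under $\pi$. Hence, for each $i$,
\[
m(l_i)=u_\pi^+(l_i)+u_\pi^-(l_i).
\]
Moreover, each multi-vertex is the intersection of exactly two T $l$-edges. As already noted in Step 3 of the proof of Theorem~\ref{thm:combinatorial_rank_K}, it is therefore counted as a forward intersection exactly once, namely for the earlier of its two edges. This gives
\[
\sum_{i=1}^s u_\pi^+(l_i)=m.
\]

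Next I will apply the elementary identity $\min\{a,b\}=b-(b-a)_+$ with $a=q(l_i)$ and $b=u_\pi^+(l_i)$:
\[
\min\{q(l_i),u_\pi^+(l_i)\}=u_\pi^+(l_i)-\big[u_\pi^+(l_i)-q(l_i)\big]_+.
\]
The first counting fact lets us substitute $u_\pi^+(l_i)=m(l_i)-u_\pi^-(l_i)$, so
\[
\big[u_\pi^+(l_i)-q(l_i)\big]_+=\big[m(l_i)-n(l_i)+d+1-u_\pi^-(l_i)\big]_+ .
\]
Summing over $i$ and using the second counting fact $\sum_i u_\pi^+(l_i)=m$ yields the displayed per-permutation identity.

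Finally, I will take the maximum over $\pi$ on the left-hand side. Because $m$ does not depend on $\pi$, this turns into $m$ minus the minimum over $\pi$ of the bracketed sum, which is exactly \eqref{eq:max_triangular_defect}.

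The only step needing care is the double-counting fact $\sum_i u_\pi^+(l_i)=m$. It relies on every multi-vertex of the CNDC lying on exactly two CNDC edges, one horizontal and one vertical. This holds by the definition of a multi-vertex, restricted to the CNDC as in the definition of decoupled vertex cofactors.
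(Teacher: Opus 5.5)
Your proposal is correct and follows essentially the same route as the paper's proof. Both use $m(l_i)=u_\pi^+(l_i)+u_\pi^-(l_i)$, the identity $\min\{a,b\}=b-(b-a)_+$, and the double count $\sum_i u_\pi^+(l_i)=m$ to get a per-permutation identity, and then turn the maximum over $\pi$ into $m$ minus the minimum of the correction sum.
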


\begin{proof}
For any T $l$-edge $l_i$, the number of multi-vertices $m(l_i)$ is uniquely partitioned into backward and forward intersections under the permutation $\pi$. This yields the local identity $m(l_i) = u_\pi^-(l_i) + u_\pi^+(l_i)$. 

The summand in the lower bound \eqref{eq:max_triangular_minor} can be rewritten by substituting $u_\pi^+(l_i) = m(l_i) - u_\pi^-(l_i)$:
\begin{align*}
    \min\big\{n(l_i) - d - 1, u_\pi^+(l_i)\big\} &= u_\pi^+(l_i) - \big[ u_\pi^+(l_i) - (n(l_i) - d - 1) \big]_+ \\
    &= u_\pi^+(l_i) - \big[ m(l_i) - u_\pi^-(l_i) - n(l_i) + d + 1 \big]_+.
\end{align*}

Summing the rewritten expression over all $s$ blocks yields:
\[
    \sum_{i=1}^s \min\big\{n(l_i) - d - 1, u_\pi^+(l_i)\big\} = \sum_{i=1}^s u_\pi^+(l_i) - \sum_{i=1}^s \big[ m(l_i) - n(l_i) + d + 1 - u_\pi^-(l_i) \big]_+.
\]
Since each multi-vertex is formed by the intersection of exactly two perpendicular T $l$-edges, it is classified as a forward intersection exactly once---specifically for the T $l$-edge that appears earlier in the sequence prescribed by $\pi$. This implies $\sum_{i=1}^s u_\pi^+(l_i) = m$. Substituting this relation into the summation gives:
\[
    \sum_{i=1}^s \min\big\{n(l_i) - d - 1, u_\pi^+(l_i)\big\} = m - \sum_{i=1}^s \big[ m(l_i) - n(l_i) + d + 1 - u_\pi^-(l_i) \big]_+.
\]
Finally, maximizing the left-hand side over all possible choices of $\pi$ is mathematically equivalent to minimizing the subtracted correction sum on the right-hand side, which yields \eqref{eq:max_triangular_defect} and completes the proof.
\end{proof}

By restricting the permutations to a specific directional layout—namely, ordering all horizontal (vertical) T $l$-edges before all vertical (horizontal) ones—we can derive a simplified lower bound for $\tau_{\max}$.

\begin{remark}\label{rem:tau_dp}
The direct evaluation of $\tau_{\max}$ requires $s!$ permutations of all the T l-edges of CNDC, but it can be computed much more efficiently in a  recursive fashion. In fact, for each subset $E\subseteq\{l_1,\ldots,l_s\}$ of T l-edges, let $F(E)$ denote the maximum of the sum in \eqref{eq:max_triangular_minor} over all permutations of the T $l$-edges in $E$, and set $F(\varnothing)=0$. Then
\[
F(E)=\max_{l\in E}\left\{F(E\setminus\{l\})+\min\left(n(l)-d-1,\ \#\bigl\{l'\in E\setminus\{l\}: l\cap l'\text{ is a multi-vertex}\bigr\}\right)\right\}.
\]
Indeed, if $l$ is put in the first place in a permutation of $E$, every T $l$-edge in $E\setminus\{l\}$ that intersects $l$ succeeds it, so the second term in the above formula  is exactly the contribution of $l$; the remaining contribution is $F(E\setminus\{l\})$. Therefore,
\[
\tau_{\max}=F(\{l_1,\ldots,l_s\}).
\]
To avoid repeatedly checking the intersections of all the T l-edges, we first record the incidence relation between T $l$-edges: for each T $l$-edge, we store the set of T $l$-edges that intersect it at a multi-vertex. Then the number
\[
\#\bigl\{l'\in E\setminus\{l\}: l\cap l'\text{ is a multi-vertex}\bigr\}
\]
can be evaluated as follows. For each T $l$-edge $l$, define its CNDC incidence set
\[
\mathcal{A}(l)=\bigl\{l'\ne l: l\cap l'\text{ is a multi-vertex}\bigr\}.
\]
For a current subset $E$, the required number is simply $|\mathcal{A}(l)\cap E|$. Thus the recursion uses only the fixed incidence relation of the CNDC; no geometric coordinates or repeated reconstruction of intersections is required. Since there are $2^s$ subsets and at most $s$ choices of the first edge for each subset, the recursion has $O(s2^s)$ transitions once these intersection counts are recorded together with the subsets. A completely direct implementation, which recounts the relevant intersections each time, gives $O(s^2 2^s)$ elementary checks which is far smaller than enumerating $s!$ permutations. The storage needed for the values $F(E)$ is $O(2^s)$. Corollary~\ref{cor:directional_bound} below presents a simpler estimate when an exact computation of $\tau_{\max}$ is unnecessary.
\end{remark}

\begin{corollary}\label{cor:directional_bound}

Let the set of $s$ T $l$-edges be partitioned into the horizontal edge set $H_1$ and the vertical edge set $H_2$. Then,
\begin{equation}\label{eq:directional_lower_bound_general}
\tau_{\max}\ge
\max\left\{
\sum_{l\in H_1}\min\{n(l)-d-1,m(l)\},
\sum_{l\in H_2}\min\{n(l)-d-1,m(l)\}
\right\}.
\end{equation}
Moreover, for the CNDC obtained from a complete partition, each T $l$-edge satisfies $n(l)-d-1\le m(l)$. Hence, if $H_{\ell}\in\{H_1,H_2\}$ is chosen so that
\[
\sum_{l\in H_{\ell}}\bigl(n(l)-d-1\bigr)
=
\max\left\{
\sum_{l\in H_1}\bigl(n(l)-d-1\bigr),
\sum_{l\in H_2}\bigl(n(l)-d-1\bigr)
\right\},
\]
then
\begin{equation}\label{eq:directional_lower_bound}
    \tau_{\max} \ge \sum_{l \in H_{\ell}} \big( n(l) - d - 1 \big).
\end{equation}

\end{corollary}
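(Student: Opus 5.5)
The plan is to prove both inequalities by evaluating the sum in Theorem~\ref{thm:combinatorial_rank_K} at one particular permutation. Since $\tau_{\max}$ is a maximum over all permutations $\pi$, any single permutation gives a lower bound. I take $\pi_1$ to list all horizontal T $l$-edges of $H_1$ first, in any internal order, followed by all vertical T $l$-edges of $H_2$. The permutation $\pi_2$ is defined symmetrically. It then suffices to compute $u_{\pi_1}^+(l)$ for every $l$.

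The key observation is that two parallel T $l$-edges cannot meet at a multi-vertex. A multi-vertex is by definition the intersection of one horizontal and one vertical T $l$-edge, and collinear T $l$-edges are disjoint because $l$-edges are maximal segments. Hence, for $l\in H_1$, every multi-vertex on $l$ is shared with an edge of $H_2$, and all such edges succeed $l$ under $\pi_1$. This gives $u_{\pi_1}^+(l)=m(l)$. For $l\in H_2$, every multi-vertex partner lies in $H_1$ and precedes $l$, so $u_{\pi_1}^+(l)=0$ and the term $\min\{n(l)-d-1,0\}=0$ vanishes. The sum in \eqref{eq:max_triangular_minor} at $\pi_1$ is therefore exactly $\sum_{l\in H_1}\min\{n(l)-d-1,m(l)\}$. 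The same computation with $\pi_2$ gives the $H_2$ sum, and taking the larger of the two yields \eqref{eq:directional_lower_bound_general}.

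For the second claim, once $n(l)-d-1\le m(l)$ holds for every CNDC edge, each minimum equals $n(l)-d-1$. Then \eqref{eq:directional_lower_bound} follows by choosing $H_\ell$ to be the direction with the larger sum.

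The main obstacle is justifying $n(l)-d-1\le m(l)$, where $m(l)$ counts the multi-vertices of the CNDC lying on $l$. Equivalently, no $l\in T_1$ has at least $d+1$ vertices that avoid the other CNDC edges. I would argue by contradiction using minimality of the complete partition (Definition~\ref{def:complete_partition}). Suppose some $l\in T_1$ has at least $d+1$ such vertices. Consider the sub-component $T_1\setminus\{l\}$ together with $T_2\cup\{l\}$. In a reasonable order of $T_2\cup\{l\}$, I would place $l$ so that the only vertices it loses to preceding edges are intersections with $T_2$-edges. This is the natural peeling step underlying the partition algorithm of \cite{huang2024stability}: an edge whose vertices outside the remaining edges number at least $d+1$ can be moved to the diagonalizable side.

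Making this rigorous may require invoking the characterization of the CNDC from \cite{huang2024stability,huang2025preliminarystudydimensionalstability}. That characterization says the CNDC is the terminal set of the peeling process, on which every edge satisfies $n(l)-m(l)\le d$. I would cite it, or reprove it by checking that the enlarged diagonalizable part admits a reasonable order. I would also check that $T_1\setminus\{l\}$ is still non-diagonalizable, so that the new partition is regular and contradicts the minimality of $T_1$. This contradiction gives $n(l)-d-1\le m(l)$.
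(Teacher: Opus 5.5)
Your proof is essentially the paper's. Ordering $H_1$ before $H_2$, and conversely, gives $u_\pi^+(l)=m(l)$ on one family and $0$ on the other. The inequality $n(l)-m(l)\le d$ then comes from a contradiction with the completeness of the CNDC, which the paper simply asserts by saying that such an edge ``could be ordered into the diagonalizable component.''

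One detail in the verification you sketch is backwards, though. Your hypothesis is that $l$ has at least $d+1$ vertices off the other CNDC edges. Those vertices can lie on $T_2$-edges, since the $n(l)-m(l)$ mono-vertices relative to the CNDC include intersections with diagonalizable edges. So an order in which $l$ loses its intersections with $T_2$-edges leaves $n(l)-p(l)$ vertices, where $p(l)$ is the number of such intersections. Your hypothesis gives no lower bound of $d+1$ for that number.

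The convention under which the complete partition works is the peeling one used in the paper, where the diagonalizable part is ordered after $T_1$. Under it, $l$ must lose exactly its intersections with $T_1\setminus\{l\}$. So put $l$ at the head of the diagonalizable part: $T_1\setminus\{l\}\succ l\succ e_1\succ\cdots\succ e_k$, where $e_1\succ\cdots\succ e_k$ is the reasonable order of $T_2$. Then $r(l)=n(l)-m(l)\ge d+1$. Each $e_j$ loses exactly what it lost before, because $l$ already preceded it when it lay in $T_1$.

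For the non-diagonalizability of $T_1\setminus\{l\}$, append $l$ at the end of a putative reasonable order of $T_1\setminus\{l\}$. Again $l$ loses only its $m(l)$ CNDC intersections, so $T_1$ would be diagonalizable, which is a contradiction. With this correction, or by citing the peeling characterization as you propose, the argument is complete.
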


\begin{proof}

We firstly order all edges in $H_1$ before all edges in $H_2$. For every $l_i\in H_1$, all its multi-vertices are shared with succeeding vertical edges, so $u_{\pi}^{+}(l_i)=m(l_i)$; for every $l_j\in H_2$, one has $u_{\pi}^{+}(l_j)=0$. Substituting these counts into \eqref{eq:max_triangular_minor} gives
\[
\tau_{\max}\ge \sum_{l\in H_1}\min\{n(l)-d-1,m(l)\}.
\]
The same argument with the vertical edges ordered before the horizontal edges gives the analogous estimate over $H_2$, and \eqref{eq:directional_lower_bound_general} follows by taking the larger of the two values.

For the simplified form, note that if a T $l$-edge $l$ in the CNDC had more than $d$ mono-vertices, then after removing its intersections with the other CNDC edges it would still contain at least $d+1$ remaining vertices. Such an edge could be ordered into the diagonalizable component, contradicting with the completeness of the CNDC. Thus $n(l)-m(l)\le d$, and in particular $n(l)-d-1\le m(l)$. Substituting this into \eqref{eq:directional_lower_bound_general} yields \eqref{eq:directional_lower_bound}.
\end{proof}

By combining the dimension formula in Corollary~\ref{cor:cndc_spline_dim} with the combinatorial lower bound on the rank of the condensed coupling matrix in Theorem~\ref{thm:combinatorial_rank_K}, we can establish an explicit upper bound for the dimension of the spline space $S_d(\mathscr{T})$.

\begin{theorem}\label{thm:spline_dim_upper_bound}
Let $\mathscr{T}$ be a T-mesh satisfying the conditions of Corollary~\ref{cor:cndc_spline_dim}. The dimension of the spline space $S_d(\mathscr{T})$ is bounded from above in the following backward-intersection form:
\begin{equation}\label{eq:spline_dim_upper_bound}
    \dim S_{d}(\mathscr{T}) \le U_{K}(\mathscr{T}),
\end{equation}
where $c$ is the number of all cross-cut edges, $t$ is the number of T $l$-edges in $T(\mathscr{T})$, $n_v$ is the number of all interior vertices and the upper bound $U_K(\mathscr{T})$ is given by:
$$U_K(\mathscr{T})=(d+1)^2 + (c-t)(d+1) + n_v + \min_{\pi} \sum_{i=1}^s \big[ m(l_i) - n(l_i) + d + 1 - u_\pi^-(l_i) \big]_+.$$
\end{theorem}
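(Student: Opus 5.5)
The plan is to combine the exact dimension formula of Corollary~\ref{cor:cndc_spline_dim} with the combinatorial rank estimate of Theorem~\ref{thm:combinatorial_rank_K}, rewritten in backward-intersection form via Corollary~\ref{cor:defect_formulation}. No new algebra on the matrix $K$ is needed; the argument is a substitution of one inequality into an identity.

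First, I would start from the identity
\[
\dim S_{d}(\mathscr{T}) = (d+1)^2 + (c-t)(d+1) + n_v + m - \mathrm{rank}(K),
\]
which holds under the standing assumption that no T $l$-edge is vanishable. Second, Theorem~\ref{thm:combinatorial_rank_K} gives $\mathrm{rank}(K)\ge\tau_{\max}$, so
\[
\dim S_d(\mathscr{T})\le (d+1)^2+(c-t)(d+1)+n_v+m-\tau_{\max}.
\]
Third, I would insert the expression for $\tau_{\max}$ from Corollary~\ref{cor:defect_formulation}:
\[
m-\tau_{\max}=\min_{\pi}\sum_{i=1}^s\bigl[m(l_i)-n(l_i)+d+1-u_\pi^-(l_i)\bigr]_+ .
\]
Substituting this into the previous inequality yields exactly $U_K(\mathscr{T})$.

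The only point requiring care is to confirm that the quantities in Corollary~\ref{cor:defect_formulation} are the same ones used in the rank bound. Specifically, $m(l_i)$ counts only multi-vertices of the CNDC, and the permutation $\pi$ ranges over the CNDC edges only. This matters for the identity $m(l_i)=u_\pi^+(l_i)+u_\pi^-(l_i)$: it would fail if $m(l_i)$ also counted intersections of $l_i$ with diagonalizable-component edges.

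I would address this by noting two facts from the setup:
\begin{itemize}
\item Definition~\ref{def:decoupled_matrices} introduces decoupled cofactors only for multi-vertices lying on two CNDC edges.
\item Intersections with the diagonalizable component were already absorbed into Theorem~\ref{thm:cndc_dim}, so they count as ordinary vertices of the CNDC in $n(l_i)$.
\end{itemize}
With these conventions consistent, the theorem follows directly.
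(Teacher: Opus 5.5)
Your proposal is correct and matches the paper's proof essentially line for line: you start from the exact formula of Corollary~\ref{cor:cndc_spline_dim}, apply $\mathrm{rank}(K)\ge\tau_{\max}$ from Theorem~\ref{thm:combinatorial_rank_K}, and substitute the backward-intersection expression for $\tau_{\max}$ from Corollary~\ref{cor:defect_formulation}, after which $m$ cancels. Your added check is sound and is a convention the paper leaves implicit at this point (it is stated only later, in its comparison with Mourrain's formula): $m(l_i)$ must count only CNDC multi-vertices, with intersections with diagonalizable edges counted among the ordinary vertices in $n(l_i)$, so that $m(l_i)=u_\pi^+(l_i)+u_\pi^-(l_i)$ and $\sum_i u_\pi^+(l_i)=m$.
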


\begin{proof}
From Corollary~\ref{cor:cndc_spline_dim}, the exact dimension of the spline space $S_d(\mathscr{T})$ is given by:
\[
    \dim S_{d}(\mathscr{T}) = (d+1)^2 + (c-t)(d+1) + n_v + m - \mathrm{rank}(K).
\]
Theorem~\ref{thm:combinatorial_rank_K} provides a lower bound on the rank of the condensed coupling matrix, stating that $\mathrm{rank}(K) \ge \tau_{\max}$. Substituting this relation into the exact dimension formula produces the initial upper bound:
\[
    \dim S_{d}(\mathscr{T}) \le (d+1)^2 + (c-t)(d+1) + n_v + m - \tau_{\max}.
\]
According to the equivalent backward-intersection formula established in Corollary~\ref{cor:defect_formulation}, the maximal order $\tau_{\max}$ can be written as:
\[
    \tau_{\max} = m - \min_{\pi} \sum_{i=1}^s \big[ m(l_i) - n(l_i) + d + 1 - u_\pi^-(l_i) \big]_+.
\]
By substituting this expression into the initial upper bound, the term representing the number of multi-vertices $m$ cancels out directly:
\begin{align*}
    \dim S_{d}(\mathscr{T}) &\le (d+1)^2 + (c-t)(d+1) + n_v + m - \left( m - \min_{\pi} \sum_{i=1}^s \big[ m(l_i) - n(l_i) + d + 1 - u_\pi^-(l_i) \big]_+ \right) \\
    &= (d+1)^2 + (c-t)(d+1) + n_v + \min_{\pi} \sum_{i=1}^s \big[ m(l_i) - n(l_i) + d + 1 - u_\pi^-(l_i) \big]_+.
\end{align*}
This simplifies exactly to \eqref{eq:spline_dim_upper_bound} and completes the proof.
\end{proof}

The above dimension bounds are sharp in the sense of attainability, rather than to assert equality for every prescribed T-mesh. For the dimensionally unstable T-mesh in Examples~\ref{ex:decoupled_conformality_matrix} and~\ref{ex:matrix_structure_illustration}, with $d=3$, one has $s=4$, $n_{\textit{CNDC}}=16$, $m=4$, and $\tau_{\max}=3$. Therefore,
\[
L_K(\mathscr{T})=(d+1)^2+(c-t)(d+1)+n_v,
\qquad
U_K(\mathscr{T})=L_K(\mathscr{T})+1.
\]
When $\det(K)\neq 0$, Example~\ref{ex:decoupled_conformality_matrix} gives $\mathrm{rank}(K)=4$ and hence $\dim S_d(\mathscr{T})=L_K(\mathscr{T})$; when $\det(K)=0$, it gives $\mathrm{rank}(K)=3$ and hence $\dim S_d(\mathscr{T})=U_K(\mathscr{T})$. Both cases occur for admissible grid coordinates. For instance, take
\[
(t_2,t_3,t_4,t_5,t_6,t_7)=(0,1,2,3,4,5),
\qquad
(s_2,s_3,s_4,s_5,s_6,s_7)=(0,1,2,3,4,5).
\]
Then
\[
g_5^{(1)}(t_4)=\frac32,
\quad g_5^{(3)}(t_4)=\frac23,
\quad g_4^{(2)}(s_5)=\frac32,
\quad g_5^{(4)}(s_4)=\frac32,
\]
so $\det(K)=\frac32-\frac32\cdot\frac23\cdot\frac32=0$ and the upper bound is attained. If only $s_7$ is changed from $5$ to $6$, then $g_5^{(4)}(s_4)=\frac43$ and
\[
\det(K)=\frac32-\frac32\cdot\frac23\cdot\frac43=\frac16\neq 0,
\]
so the lower bound is attained. Thus different geometric realizations of the same unstable T-mesh topology attain the lower and upper bounds, respectively. This is the precise sense in which the bounds derived in this paper are sharp.

By restricting the permutation to the directional sequencing strategy outlined in Corollary~\ref{cor:directional_bound}, the optimization over all permutations can be bypassed, yielding a more explicit and readily computable upper bound.

\begin{corollary}\label{cor:spline_dim_directional_upper_bound}
Under the same assumptions as in Theorem~\ref{thm:spline_dim_upper_bound} and in Corollary~\ref{cor:directional_bound}, the dimension of the spline space $S_d(\mathscr{T})$ satisfies the following directional upper bound:
\begin{equation}\label{eq:spline_dim_directional_upper_bound}
    \dim S_{d}(\mathscr{T}) \le (d+1)^2 + (c-t)(d+1) + n_v + m - \sum_{l \in H_{\ell}} \big( n(l) - d - 1 \big).
\end{equation}
\end{corollary}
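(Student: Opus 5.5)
The bound follows by chaining the exact dimension formula with the directional rank estimate; no new algebra about $K$ is needed. I will start from Corollary~\ref{cor:cndc_spline_dim}, which gives the exact identity
\[
\dim S_{d}(\mathscr{T}) = (d+1)^2 + (c-t)(d+1) + n_v + m - \mathrm{rank}(K).
\]
Every term except $\mathrm{rank}(K)$ is fixed by the mesh topology. An upper bound on the dimension is therefore the same as a lower bound on $\mathrm{rank}(K)$.

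Next, I will invoke Theorem~\ref{thm:combinatorial_rank_K}, which gives $\mathrm{rank}(K)\ge\tau_{\max}$. Then I will apply the simplified directional estimate \eqref{eq:directional_lower_bound} of Corollary~\ref{cor:directional_bound}, namely $\tau_{\max}\ge\sum_{l\in H_\ell}(n(l)-d-1)$. Here $H_\ell$ is whichever of the horizontal or vertical edge sets gives the larger sum. Chaining these yields
\[
\mathrm{rank}(K)\ \ge\ \sum_{l\in H_\ell}\bigl(n(l)-d-1\bigr).
\]
Substituting this into the exact formula, with the subtracted term $-\mathrm{rank}(K)$, immediately gives \eqref{eq:spline_dim_directional_upper_bound}.

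The only step that deserves a remark is the validity of the simplified directional estimate, because it relies on the inequality $n(l)-d-1\le m(l)$ for every CNDC edge. That inequality is what lets the minima $\min\{n(l)-d-1,m(l)\}$ be replaced by $n(l)-d-1$. It holds precisely because $T_1$ comes from a complete partition, as shown in the proof of Corollary~\ref{cor:directional_bound}. The hypothesis ``under the same assumptions as in Corollary~\ref{cor:directional_bound}'' supplies this.

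I will also note for consistency that this bound is weaker than, or equal to, $U_K(\mathscr{T})$ of Theorem~\ref{thm:spline_dim_upper_bound}. The reason is that it corresponds to evaluating the minimization in $U_K$ at one particular permutation, the one placing $H_\ell$ first. No genuine obstacle is expected, since the argument is a direct substitution of previously established inequalities.
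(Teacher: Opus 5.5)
Your proposal is correct and follows the paper's proof: you combine the exact formula of Corollary~\ref{cor:cndc_spline_dim} with $\mathrm{rank}(K)\ge\tau_{\max}$ from Theorem~\ref{thm:combinatorial_rank_K} and the directional estimate \eqref{eq:directional_lower_bound}, then substitute. Your side remark that this bound is never smaller than $U_K(\mathscr{T})$ is also right: for the ordering that puts $H_\ell$ first, the defect sum in $U_K$ equals exactly $m-\sum_{l\in H_\ell}(n(l)-d-1)$, because each multi-vertex lies on exactly one edge of $H_\ell$.
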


\begin{proof}
From the proof of Theorem~\ref{thm:spline_dim_upper_bound}, the exact dimension formula combined with the rank bound $\mathrm{rank}(K) \ge \tau_{\max}$ yields the initial upper bound:
\[
    \dim S_{d}(\mathscr{T}) \le (d+1)^2 + (c-t)(d+1) + n_v + m - \tau_{\max}.
\]
Applying the directional lower bound from Corollary~\ref{cor:directional_bound}, we have $\tau_{\max} \ge \sum_{l \in H_{\ell}} (n(l) - d - 1)$. Multiplying this inequality by $-1$ reverses the inequality sign, giving $-\tau_{\max} \le -\sum_{l \in H_{\ell}} (n(l) - d - 1)$. Substituting this directly into the initial upper bound yields \eqref{eq:spline_dim_directional_upper_bound}, which completes the proof.
\end{proof}

\subsection{Relationship with Mourrain's homological formulation}\label{subsec:mourrain_comparison}

Mourrain's dimension formulas of polynomial spline spaces over T-meshes are written in terms of cells, interior edges, interior vertices, and a homological correction~\cite{dim2014}, whereas the formulas above are written in terms of cross-cuts, T $l$-edges, and conformality-matrix ranks. In this section, we prove relationship between these two formulas. We only focus on the case of the highest order of smoothness. We first prove that these two formulas have the same topological part.

\begin{lemma}\label{lem:mourrain_topological_translation}
Let $f_2$, $f_1^h$, $f_1^v$, and $f_0$ denote, respectively, the numbers of cells, horizontal interior edges, vertical interior edges, and interior vertices in Mourrain's notation, and put $f_1^o=f_1^h+f_1^v$. For bi-degree $(d,d)$ and smoothness order $(d-1,d-1)$, the combinatorial part of Mourrain's dimension formula is
\begin{equation}\label{eq:mourrain_combinatorial_part}
B_{\mathrm M}(\mathscr T)
=(d+1)^2f_2-d(d+1)f_1^o+d^2f_0.
\end{equation}
It satisfies
\begin{equation}\label{eq:mourrain_base_equals_cofactor_base}
B_{\mathrm M}(\mathscr T)
=(d+1)^2+(c-t)(d+1)+n_v.
\end{equation}
\end{lemma}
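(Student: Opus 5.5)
The plan is to reduce \eqref{eq:mourrain_base_equals_cofactor_base} to two purely combinatorial identities relating $f_2$, $f_1^o$ and $f_0$ to $c$, $t$ and $n_v$, and then substitute them. Throughout, $f_0=n_v$, since both count interior vertices. Write $r$ for the number of rays, so the interior $l$-edges number $c+r+t$.

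\textbf{Step 1 (Euler relation).} Apply Euler's formula $V-E+F=1$ to the planar subdivision of the simply connected domain, with $F$ counting the bounded faces, i.e.\ the cells. The boundary of the domain is a closed polygonal cycle, so the boundary vertices and boundary edges are equal in number and cancel. This leaves
\[
f_2-f_1^o+f_0=1.
\]

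\textbf{Step 2 (double count of vertex--$l$-edge incidences).} Every interior edge lies in exactly one interior $l$-edge. An interior $l$-edge with $k$ vertices, endpoints included, consists of $k-1$ edges. Summing over all interior $l$-edges, the number of incidences (vertex, interior $l$-edge containing it) therefore equals $f_1^o+(c+r+t)$.

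Now count the same incidences vertex by vertex.
\begin{itemize}
\item An interior vertex is either a crossing or a T-junction. In both cases it lies on exactly two interior $l$-edges: one horizontal and one vertical, with a T-junction being the endpoint of one of them.
\item A boundary vertex contributes once for each $l$-edge ending there. A cross-cut has two boundary endpoints, a ray has one, and a T $l$-edge has none.
\end{itemize}
Hence the incidence count is also $2f_0+2c+r$. Equating the two counts gives
\[
f_1^o=2f_0+c-t,
\]
and Step 1 then yields $f_2=1+f_0+c-t$.

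\textbf{Step 3 (substitution).} Insert these expressions into \eqref{eq:mourrain_combinatorial_part} and collect terms:
\begin{itemize}
\item the constant term is $(d+1)^2$;
\item the coefficient of $(c-t)$ is $(d+1)^2-d(d+1)=d+1$;
\item the coefficient of $f_0$ is $(d+1)^2-2d(d+1)+d^2=1$.
\end{itemize}
This gives exactly $(d+1)^2+(c-t)(d+1)+n_v$. The first two terms of \eqref{eq:mourrain_combinatorial_part} are the standard dimension counts for bi-degree $(d,d)$ with smoothness $d-1$, so I will only cite them briefly.

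\textbf{Main obstacle.} The one delicate point is the incidence count in Step 2. It requires that every line through an interior vertex be an \emph{interior} $l$-edge, which holds because a boundary segment cannot pass through an interior point. It also requires that no interior vertex lie on the boundary. Both follow from the axis-aligned rectangular partition assumption and the stated classification of $l$-edges. For nonconvex simply connected domains the same argument works, provided boundary corner vertices are treated as boundary vertices with no interior $l$-edge incident unless one actually ends there.
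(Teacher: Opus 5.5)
Your proof is correct and follows the paper's argument: the relative Euler relation $f_2-f_1^o+f_0=1$, then the double count giving $f_1^o=2f_0+c-t$, then substitution into $B_{\mathrm M}$. The only cosmetic difference is in the double count: you count all vertices on each interior $l$-edge, boundary endpoints included, and then subtract the boundary incidences $2c+r$, whereas the paper counts only interior vertices and uses $n(l)+1$, $n(l)$ and $n(l)-1$ interior edges for cross-cuts, rays and T $l$-edges respectively.
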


\begin{proof}
Since the domain is simply connected, its relative Euler relation is
\begin{equation}\label{eq:relative_euler_identity}
f_2-f_1^o+f_0=1.
\end{equation}
We also have
\begin{equation}\label{eq:interior_edge_large_edge_identity}
f_1^o=2f_0+c-t.
\end{equation}
Indeed, let $n(l)$ be the number of interior vertices on an interior $l$-edge $l$. Every interior vertex belongs to exactly one horizontal and one vertical interior $l$-edge, and hence
\[
\sum_{l}n(l)=2f_0.
\]
An interior $l$-edge with $n(l)$ interior vertices consists of $n(l)+1$ interior edges if it is a cross-cut, of $n(l)$ interior edges if it is a ray, and of $n(l)-1$ interior edges if it is a T $l$-edge. Summing over all interior $l$-edges gives \eqref{eq:interior_edge_large_edge_identity}; the ray contributions cancel and therefore do not appear explicitly.

Using \eqref{eq:relative_euler_identity} to write $f_2=1+f_1^o-f_0$ in \eqref{eq:mourrain_combinatorial_part}, we obtain
\begin{align*}
B_{\mathrm M}(\mathscr T)
&=(d+1)^2+(d+1)f_1^o-(2d+1)f_0\\
&=(d+1)^2+(d+1)(2f_0+c-t)-(2d+1)f_0\\
&=(d+1)^2+(c-t)(d+1)+f_0.
\end{align*}
$f_0=n_v$ gives \eqref{eq:mourrain_base_equals_cofactor_base}.
\end{proof}

Next we use 
\[
h_d(\mathscr T)
:=h_{d,d}^{(d-1,d-1)}(\mathscr T)
\]
to denote Mourrain's homological correction term. The folllowing proposition identifies this correction term with the rank defects appearing in the smoothing-cofactor and decoupled formulations.

\begin{proposition}\label{prop:homological_cofactor_identification}
Let $M(T(\mathscr T))$ be the global conformality matrix, let $M_1$ be the conformality matrix of the CNDC in the complete partition, and let $K$ be the condensed coupling matrix. Then
\begin{equation}\label{eq:homological_correction_identification}
\boxed{
 h_d(\mathscr T)
 =t(d+1)-\operatorname{rank}M(T(\mathscr T))
 =s(d+1)-\operatorname{rank}M_1
 =m-\operatorname{rank}K.}
\end{equation}
Consequently, Mourrain's homological correction term and the residual smoothing-cofactor correction term are not merely analogous: they are the same numerical invariant of the fixed T-mesh realization.
\end{proposition}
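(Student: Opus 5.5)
The plan is to prove the three equalities in \eqref{eq:homological_correction_identification} by comparing exact dimension formulas. Each side is then pinned to $\dim S_d(\mathscr T)$ minus the common topological base $(d+1)^2+(c-t)(d+1)+n_v$. The only external input is Mourrain's exact formula from \cite{dim2014}, specialized to bi-degree $(d,d)$ and smoothness $(d-1,d-1)$:
\[
\dim S_d(\mathscr T)=B_{\mathrm M}(\mathscr T)+h_d(\mathscr T).
\]
It holds for a fixed realization of the mesh, with $h_d$ the dimension of the relevant homology module. By Lemma~\ref{lem:mourrain_topological_translation}, $B_{\mathrm M}(\mathscr T)=(d+1)^2+(c-t)(d+1)+n_v$. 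Hence
\[
h_d(\mathscr T)=\dim S_d(\mathscr T)-(d+1)^2-(c-t)(d+1)-n_v .
\]

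Next I substitute the three exact formulas already available. Theorem~\ref{thm: dim formula} gives
\[
\dim S_d(\mathscr T)-(d+1)^2-c(d+1)-n_v=-\operatorname{rank}M(T(\mathscr T)).
\]
Adding back the $t(d+1)$ yields $h_d=t(d+1)-\operatorname{rank}M(T(\mathscr T))$. Theorem~\ref{thm:cndc_dim} gives $\dim S_d=(d+1)^2+(c+s-t)(d+1)+n_v-\operatorname{rank}M_1$, so $h_d=s(d+1)-\operatorname{rank}M_1$. Corollary~\ref{cor:cndc_spline_dim} gives $\dim S_d=(d+1)^2+(c-t)(d+1)+n_v+m-\operatorname{rank}K$, so $h_d=m-\operatorname{rank}K$. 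As a consistency check, the middle equality can be confirmed independently. Theorem~\ref{thm:cndc_rank} gives $s(d+1)-\operatorname{rank}M_1=m-\operatorname{rank}K$ directly. Comparing Theorems~\ref{thm: dim formula} and~\ref{thm:cndc_dim} gives $\operatorname{rank}M(T(\mathscr T))=\operatorname{rank}M_1+(t-s)(d+1)$, which reflects that the diagonalizable component has full row rank.

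All three quantities are rank defects (nullity minus excess columns) computed on the same geometric realization, and they all equal $h_d$. That is the ``same numerical invariant'' claim.

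The main obstacle is the first step: correctly invoking Mourrain's formula in exactly this setting. I must check that his hypotheses (a simply connected domain and the boundary conventions for interior edges) match ours, and that his correction term is additive with the sign and indexing used here. In particular, the homology terms other than $h_d$, such as those built from $H_0$ or $H_2$ pieces, must vanish for $\mu=d-1$ on a simply connected T-mesh. I would first try to cite \cite{dim2014} directly for $\dim S=$ combinatorial part $+\,h$ in this case. If an extra term appears there, I would argue it vanishes because every interior vertex lies on a horizontal and a vertical line with nontrivial ideals, which makes the vertex-level map surjective in the top smoothness case.
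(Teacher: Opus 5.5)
Your proposal is correct and follows the paper's proof: both identify $h_d(\mathscr T)$ with $\dim S_d(\mathscr T)-\bigl((d+1)^2+(c-t)(d+1)+n_v\bigr)$ using Mourrain's specialized formula and Lemma~\ref{lem:mourrain_topological_translation}, and then read off the three equalities from Theorem~\ref{thm: dim formula}, Theorem~\ref{thm:cndc_dim}, and Corollary~\ref{cor:cndc_spline_dim}. Your extra consistency check through Theorem~\ref{thm:cndc_rank} and your caution about matching Mourrain's hypotheses go beyond the paper, which simply cites the specialized formula from \cite{dim2014} without further verification.
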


\begin{proof}
Mourrain's specialized formula and Lemma~\ref{lem:mourrain_topological_translation} give
\begin{equation}\label{eq:mourrain_specialized_dimension}
\dim S_d(\mathscr T)
=(d+1)^2+(c-t)(d+1)+n_v+h_d(\mathscr T).
\end{equation}
On the other hand, Theorem~\ref{thm: dim formula} gives
\[
\dim S_d(\mathscr T)
=(d+1)^2+c(d+1)+n_v-\operatorname{rank} M(T(\mathscr T)).
\]
Comparing the two  formulas proves the first equality in \eqref{eq:homological_correction_identification}.

The complete-partition formula in Theorem~\ref{thm:cndc_dim} gives
\[
\dim S_d(\mathscr T)
=(d+1)^2+(c+s-t)(d+1)+n_v-\operatorname{rank} M_1,
\]
and comparison with \eqref{eq:mourrain_specialized_dimension} proves
$h_d(\mathscr T)=s(d+1)-\operatorname{rank} M_1$.
Finally, Corollary~\ref{cor:cndc_spline_dim} gives
\[
\dim S_d(\mathscr T)
=(d+1)^2+(c-t)(d+1)+n_v+m-\operatorname{rank} K,
\]
which proves the last equality.
\end{proof}

We next translate Mourrain's ordered weights with the terminology in this paper. In his notation, $\operatorname{mis}(\mathscr T)$ is the set of maximal interior segments, namely the maximal segments that do not meet the boundary. Using the terminology of this paper, these segments are precisely the T $l$-edges. For an ordering $\iota$, Mourrain defines $\Gamma_{\iota}(l)$ as the vertices of $l$ that do not lie on a segment of larger order and defines the weight $\omega_{\iota}(l)$ as the sum of their degree--smoothness multiplicities. Since $d-(d-1)=1$, every such vertex has multiplicity one in the present setting, so $\omega_{\iota}(l)=|\Gamma_{\iota}(l)|$ where $|\Gamma_{\iota}(l)|$ denotes the number of vertices in $\Gamma_{\iota}(l)$.
\begin{lemma}\label{lem:mourrain_cndc_order_reduction}
Let
\[
D_{\iota}^{\mathrm M}(\mathscr T)
=\sum_{l\in T(\mathscr T)}[d+1-\omega_{\iota}(l)]_+
\]
be Mourrain's ordering correction term for all $t$ T $l$-edges. For a permutation $\pi$ of the $s$ CNDC T $l$-edges, let
\[
D_{\pi}^{\mathrm{CNDC}}
=\sum_{i=1}^{s}
[m(l_i)-n(l_i)+d+1-u_{\pi}^{-}(l_i)]_+.
\]
Then
\begin{equation}\label{eq:mourrain_cndc_min_equality}
\min_{\iota}D_{\iota}^{\mathrm M}(\mathscr T)
=
\min_{\pi}D_{\pi}^{\mathrm{CNDC}}.
\end{equation}
Moreover, after reversing one of the two ordering conventions when necessary, the local weights satisfy
\begin{equation}\label{eq:mourrain_weight_translation}
\omega_{\pi}(l_i)
=n(l_i)-m(l_i)+u_{\pi}^{-}(l_i).
\end{equation}
\end{lemma}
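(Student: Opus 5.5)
The plan is to prove \eqref{eq:mourrain_weight_translation} first, for a suitably extended ordering, and then obtain \eqref{eq:mourrain_cndc_min_equality} as two inequalities. The ordering is built to mirror the peeling algorithm of the complete partition \cite{huang2024stability}. That algorithm repeatedly removes a T $l$-edge carrying at least $d+1$ vertices that do not lie on any remaining T $l$-edge, and the diagonalizable component $T_2$ consists exactly of the removed edges. Here $m(l_i)$ counts only intersections of $l_i$ with other CNDC edges; any intersection of $l_i$ with an edge of $T_2$ is a mono-vertex from the CNDC viewpoint and is counted in $n(l_i)-m(l_i)$.

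\textbf{Step 1 (the extended order $\iota_\pi$).} Given a permutation $\pi$ of the CNDC edges, define a Mourrain ordering $\iota_\pi$ of all $t$ T $l$-edges as follows. Every edge of $T_2$ gets smaller order than every CNDC edge. Among $T_2$, edges peeled earlier get smaller order. Among the CNDC, orders are assigned by reversing $\pi$, so that $l_j\succ l_i$ in $\pi$ means $l_j$ has smaller Mourrain order; this is the "reversal of one convention" in the statement.

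\textbf{Step 2 (weights and defects under $\iota_\pi$).} Take a CNDC edge $l_i$. The segments of larger order that meet it are exactly the CNDC edges succeeding $l_i$ in $\pi$. Hence $\Gamma_{\iota_\pi}(l_i)$ consists of its $n(l_i)-m(l_i)$ mono-vertices (intersections with $T_2$ included) together with its $u_\pi^-(l_i)$ backward multi-vertices. This gives \eqref{eq:mourrain_weight_translation}.

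Now take an edge $l\in T_2$. The segments of larger order meeting it are the edges still present when $l$ was peeled. By the peeling criterion, $|\Gamma_{\iota_\pi}(l)|\ge d+1$, so $l$ contributes $0$. Therefore $D^{\mathrm M}_{\iota_\pi}(\mathscr T)=D^{\mathrm{CNDC}}_\pi$, and so $\min_\iota D^{\mathrm M}_\iota\le\min_\pi D^{\mathrm{CNDC}}_\pi$.

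\textbf{Step 3 (the reverse inequality).} I expect this to be the main obstacle, because an arbitrary $\iota$ may interleave $T_2$ and CNDC edges. Starting from any $\iota$, let $\pi$ be the order it induces on the CNDC, reversed as above, and compare $D^{\mathrm M}_\iota$ with $D^{\mathrm M}_{\iota_\pi}$ term by term.

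For a CNDC edge $l_i$, passing from $\iota$ to $\iota_\pi$ keeps the relative order of CNDC edges and places every $T_2$ edge below $l_i$. So the set of larger-order segments through vertices of $l_i$ can only shrink, giving $\Gamma_\iota(l_i)\subseteq\Gamma_{\iota_\pi}(l_i)$. Since $x\mapsto[d+1-x]_+$ is nonincreasing, the defect of $l_i$ does not increase.

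For $l\in T_2$, the defect under $\iota_\pi$ is $0$ by Step 2, which is at most its defect under $\iota$. Summing over all edges gives $D^{\mathrm M}_{\iota_\pi}\le D^{\mathrm M}_\iota$. Combined with Step 2, $D^{\mathrm{CNDC}}_\pi\le D^{\mathrm M}_\iota$, and minimizing over $\iota$ yields the reverse inequality.

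\textbf{Points to verify.} The remaining care is bookkeeping. First, $n(l_i)$ and $m(l_i)$ must be understood as in Theorem~\ref{thm:cndc_rank}, with $T_2$-intersections counted among the mono-vertices. Second, the peeling criterion for $T_2$ must hold in the form "at least $d+1$ vertices off all not-yet-removed edges". This is what the uniqueness result, Proposition~\ref{prop:uniqueness_complete_partition}, and the construction in \cite{huang2024stability} supply. If the reasonable order of $T_2$ is instead phrased with respect to preceding edges, I would reverse the $T_2$ ordering accordingly; the monotonicity argument in Step 3 does not depend on this choice.
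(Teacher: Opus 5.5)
Your proposal is correct and follows the paper's proof. For $\le$ you extend a CNDC order by placing the diagonalizable edges below it in peeling order (the paper's ``reverse elimination order''), and for $\ge$ you use the same fact that lowering $T_2$ edges can only enlarge $\Gamma$ of the CNDC edges; the one difference is that you compare $\iota$ with $\iota_\pi$ term by term in a single step, where the paper moves the removed edges to the end one at a time, which is a tidier form of the same exchange argument.
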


\begin{proof}
The quantity $n(l_i)-m(l_i)$ is the number of mono-vertices of $l_i$ relative to the CNDC. These include the endpoints, intersections with cross-cuts or rays, and intersections with T $l$-edges already removed into the diagonalizable component. Among the CNDC multi-vertices, exactly the $u_{\pi}^{-}(l_i)$ intersections with preceding CNDC edges belong to $\Gamma_{\pi}(l_i)$ under the corresponding ordering convention. Since each vertex has multiplicity one, this proves \eqref{eq:mourrain_weight_translation}, and hence
\[
[d+1-\omega_{\pi}(l_i)]_+
=[m(l_i)-n(l_i)+d+1-u_{\pi}^{-}(l_i)]_+.
\]

It remains to justify that the diagonalizable component contributes no additional optimized defect. The complete partition can be constructed by repeatedly removing a T $l$-edge that has at least $d+1$ vertices not shared with the T $l$-edges that remain. Record these removals in their elimination order. Given any CNDC permutation $\pi$, extend it by the reverse elimination order of the removed edges. Each removed edge then has at least $d+1$ vertices in its corresponding set $\Gamma$, so its defect is zero, while the restriction of the full correction to the CNDC is exactly $D_{\pi}^{\mathrm{CNDC}}$. This proves
\[
\min_{\iota}D_{\iota}^{\mathrm M}(\mathscr T)
\le \min_{\pi}D_{\pi}^{\mathrm{CNDC}}.
\]

Conversely, start with any full ordering. A removable diagonalizable edge may be moved to the appropriate end of the ordering without increasing the total defect: its at least $d+1$ private vertices force its own defect to be zero, and moving it out of the residual ordering can only make its intersection vertices available to the remaining edges, thereby increasing their weights and weakly decreasing their defects. Repeating this exchange for every removed edge produces a complete-partition-compatible ordering whose nonzero correction is the correction of the induced CNDC permutation. Therefore
\[
\min_{\pi}D_{\pi}^{\mathrm{CNDC}}
\le \min_{\iota}D_{\iota}^{\mathrm M}(\mathscr T),
\]
and \eqref{eq:mourrain_cndc_min_equality} follows.
\end{proof}

\begin{theorem}\label{thm:precise_mourrain_comparison}
Set
\[
B_0(\mathscr T)=(d+1)^2+(c-t)(d+1)+n_v.
\]
Mourrain's dimension formula and bounds are 
\begin{equation}\label{eq:mourrain_published_bounds_translated}
B_0(\mathscr T)
\le \dim S_d(\mathscr T)=B_0(\mathscr T)+h_d(\mathscr T)
\le B_0(\mathscr T)+\min_{\iota}D_{\iota}^{\mathrm M}(\mathscr T).
\end{equation}
The upper bound in \eqref{eq:mourrain_published_bounds_translated} coincides exactly with the upper bound in Theorem~\ref{thm:spline_dim_upper_bound}:
\begin{equation}\label{eq:upper_bound_exact_mourrain_equality}
U_K(\mathscr T)
=B_0(\mathscr T)+\min_{\pi}D_{\pi}^{\mathrm{CNDC}}
=B_0(\mathscr T)+\min_{\iota}D_{\iota}^{\mathrm M}.
\end{equation}
The lower bound $B_0(\mathscr T)$ is generally smaller than the lower bound in Proposition~\ref{prop:homological_cofactor_identification}. In fact 
\begin{equation}\label{eq:lower_bound_refined_mourrain_equality}
L_K(\mathscr T)
=B_0(\mathscr T)+[s(d+1)-n_{\textit{CNDC}}]_+.
\end{equation}
\end{theorem}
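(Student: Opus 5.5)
The theorem collects three separate facts: Mourrain's formula together with his published bounds, written in the present notation; equality of the upper bounds; and the explicit form of $L_K$. I would prove them in that order, using only results already established.

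\textbf{Step 1: the chain \eqref{eq:mourrain_published_bounds_translated}.}
The middle equality $\dim S_d(\mathscr T)=B_0(\mathscr T)+h_d(\mathscr T)$ is \eqref{eq:mourrain_specialized_dimension}. That formula came from Mourrain's specialized formula combined with Lemma~\ref{lem:mourrain_topological_translation}.

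For the lower inequality I need $h_d(\mathscr T)\ge 0$. By Proposition~\ref{prop:homological_cofactor_identification}, $h_d(\mathscr T)=t(d+1)-\operatorname{rank}M(T(\mathscr T))$. The global conformality matrix $M(T(\mathscr T))$ has $t(d+1)$ rows, so this quantity is nonnegative.

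For the upper inequality I use the other expression from the same proposition, $h_d(\mathscr T)=m-\operatorname{rank}K$. Theorem~\ref{thm:combinatorial_rank_K} and Corollary~\ref{cor:defect_formulation} give
\[
m-\operatorname{rank}K\le m-\tau_{\max}=\min_\pi D^{\mathrm{CNDC}}_\pi .
\]
Lemma~\ref{lem:mourrain_cndc_order_reduction} then identifies $\min_\pi D^{\mathrm{CNDC}}_\pi$ with $\min_\iota D^{\mathrm M}_\iota(\mathscr T)$. This reproduces Mourrain's published bound without invoking his homological argument directly, although his statement could equally be cited.

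\textbf{Step 2: equality of upper bounds \eqref{eq:upper_bound_exact_mourrain_equality}.}
The first equality is the definition of $U_K$ in Theorem~\ref{thm:spline_dim_upper_bound}, once $B_0(\mathscr T)$ is substituted. The second equality is \eqref{eq:mourrain_cndc_min_equality}.

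\textbf{Step 3: the lower bound \eqref{eq:lower_bound_refined_mourrain_equality}.}
This is immediate from the formula for $L_K$ in Theorem~\ref{thm:dim_lower_bound_K}, since $B_0(\mathscr T)$ is exactly the first three terms of $L_K$. The truncated term $[s(d+1)-n_{\textit{CNDC}}]_+$ is nonnegative, so $L_K\ge B_0$, with strict inequality whenever $s(d+1)>n_{\textit{CNDC}}$. This justifies the phrase ``generally smaller''.

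One point needs a remark. The statement refers to ``the lower bound in Proposition~\ref{prop:homological_cofactor_identification}''. I read this as meaning $L_K$, which by Proposition~\ref{prop:lower_bound_equivalence} is also equal to $L(\mathscr T)$.

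\textbf{Main obstacle.}
The only nontrivial ingredient is the identification of the two minimized correction terms. It is already packaged in Lemma~\ref{lem:mourrain_cndc_order_reduction}, so I would simply invoke it.

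The remaining delicate point is matching conventions. Mourrain's weights use vertices not on segments of larger order, while $u^-_\pi$ counts intersections with preceding edges. I would state explicitly that the ordering is reversed as in \eqref{eq:mourrain_weight_translation}. Everything else reduces to substituting one equality into another.
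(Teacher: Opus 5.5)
Your proposal is correct. Steps 2 and 3 match the paper. For \eqref{eq:lower_bound_refined_mourrain_equality}, the paper combines $h_d=m-\operatorname{rank}K$ with the $m\times\bigl(n_{\textit{CNDC}}+m-s(d+1)\bigr)$ shape of $K$ to re-derive $h_d(\mathscr T)\ge[s(d+1)-n_{\textit{CNDC}}]_+$. That is just the argument of Theorem~\ref{thm:dim_lower_bound_K} rewritten in terms of $h_d$. Your appeal to that theorem, together with the remark that the displayed identity is definitional, is therefore equivalent.

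The real difference is in Step 1. The paper takes $0\le h_d\le D^{\mathrm M}_\iota$ directly from \cite{dim2014}. You instead derive both inequalities inside the cofactor framework:
\begin{itemize}
\item nonnegativity from the row count of $M(T(\mathscr T))$;
\item the upper inequality from $h_d=m-\operatorname{rank}K\le m-\tau_{\max}=\min_\pi D^{\mathrm{CNDC}}_\pi$, followed by Lemma~\ref{lem:mourrain_cndc_order_reduction}.
\end{itemize}

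Your route gives a non-homological proof of Mourrain's bounds in the highest-smoothness setting. It also makes explicit that his upper bound is exactly the triangular-minor estimate of Theorem~\ref{thm:combinatorial_rank_K}.

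The cost is a dependence on one direction of Lemma~\ref{lem:mourrain_cndc_order_reduction}, namely $\min_\pi D^{\mathrm{CNDC}}_\pi\le\min_\iota D^{\mathrm M}_\iota$ (the exchange argument in its proof). In your route, the upper inequality in \eqref{eq:mourrain_published_bounds_translated} rests on it. The paper's citation keeps \eqref{eq:mourrain_published_bounds_translated} independent of the lemma and uses the lemma only for \eqref{eq:upper_bound_exact_mourrain_equality}, which both routes need anyway.
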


\begin{proof}
The upper bounds \eqref{eq:mourrain_published_bounds_translated} follow from Lemma~\ref{lem:mourrain_topological_translation} and the  inequalities $0\le h_d\le D_{\iota}^{\mathrm M}$~\cite{dim2014}. Equation \eqref{eq:upper_bound_exact_mourrain_equality} follows from Lemma~\ref{lem:mourrain_cndc_order_reduction}.

For the lower bound, Proposition~\ref{prop:homological_cofactor_identification} gives
\[
h_d(\mathscr T)=m-\operatorname{rank} K.
\]
The matrix $K$ has $m$ rows and
\[
q=n_{\textit{CNDC}}+m-s(d+1)
\]
columns. Therefore
\begin{align*}
h_d(\mathscr T)
&\ge m-\min\{m,q\}\\
&=[m-q]_+\\
&=[s(d+1)-n_{\textit{CNDC}}]_+,
\end{align*}
Adding the common topological term $B_0(\mathscr T)$ proves \eqref{eq:lower_bound_refined_mourrain_equality}.
\end{proof}

The preceding results explain both the equivalence and the additional structural information supplied by the present approach. Algebraically, the homological correction term is the nullity $m-\operatorname{rank} K$ of a coupling matrix obtained after all diagonalizable T $l$-edges have been eliminated. Combinatorially, the optimized correction term in the upper dimension bound can therefore be computed using only the $s$ CNDC edges, rather than all $t$ T $l$-edges. This is the precise sense in which the CNDC formulation is finer: it does not change the underlying correction invariant, but isolates it on the unique component where geometric rank variation can occur and exposes the extra matrix-size information used in the lower bound.

\section{Conclusion and future work}\label{sec:conclusion}

This paper develops a decoupling framework for the dimension of the spline spaces with the highest order of smoothness. The method separates the multi-vertex constraints in the CNDC and reduces the dimension problem to localized edge equations together with a small compatibility system.

Based on this framework, we obtain new dimension formulas and  explicit upper and lower bounds for the dimension of the spline space. These bounds are expressed through the refined structure that remains after the diagonalizable part of the T-mesh has been separated. The examples considered in this paper show that the bounds are sharp in the  sense that  different geometric realizations of the same T-mesh topology exist for which the spline-space dimension attains the lower and upper bounds, respectively.

We have further clarified the relationship between the present smoothing-cofactor approach and Mourrain's homological approach. Although the two methods are developed from different mathematical viewpoints, they lead to the same underlying decomposition of the dimension once the topology of the T-mesh is properly taken into account. The homological correction term is represented in the present framework by the remaining global compatibility conditions, while the refined CNDC description provides a more detailed interpretation of how these conditions arise from the incidence structure of the T $l$-edges. Thus, the present results do not merely restate the homological bounds, but provide a complementary and more localized explanation of their origin.

Several promising directions remain open for future research:
\begin{itemize}
    \item \textbf{Extension to Arbitrary and Mixed Order of Smoothness:} A natural problem is to extend the decoupling framework to polynomial splines with lower or mixed orders of smoothness ($\mu < d-1$), where the continuity constraints across adjacent cells exhibit more complex algebraic couplings.
    \item \textbf{Basis Function Construction:} Utilizing the localized linear equations derived from the decoupled cofactors, we plan to develop constructive algorithms for stable, locally supported basis functions over general non-diagonalizable T-meshes, which is essential for isogeometric analysis (IGA) applications.
    \item \textbf{Generalization to Higher Dimensions:} Extending the decoupling technique and CNDC analysis framework to three-dimensional T-meshes (trivariate splines) or unstructured meshes presents an important and challenging avenue for further exploration.
\end{itemize}

\section*{Acknowledgements}
This work is supported by the Key Project of National Natural Science Foundation of China (12494550,12494555), NSF of China(12371383). The authors declare that there are no conflicts of interest. 

\bibliographystyle{IEEEtran}
\bibliography{T1}

\end{document}